
\documentclass{amsart}

\usepackage[utf8]{inputenc}
\usepackage[english]{babel}

\usepackage{amsthm,amssymb,latexsym,amsmath}
\numberwithin{equation}{section}
\numberwithin{figure}{section}
\numberwithin{table}{section}

\usepackage[all]{xy}
\usepackage{tikz}
\usepackage{tikz-cd}
\tolerance=1000
\usetikzlibrary{decorations.pathreplacing}

\usepackage{hyperref}

\usepackage{color}

\newcommand{\Z}{{\mathbb Z}}

\newcommand{\Q}{\mathbb{Q}}

\newcommand{\p}[1]{{\mathbb{P}^{#1}}}

\newcommand{\op}[1]{{\mathcal O}_{\mathbb{P}^{#1}}}

\newcommand{\ox}{{\mathcal O}_{X}}

\newcommand{\OC}{{\mathcal O}_{C}}

\newcommand{\ch}{\operatorname{ch}}

\newcommand{\cale}{{\mathcal E}}

\newcommand{\calg}{{\mathcal G}}
\newcommand{\calh}{{\mathcal H}}
\newcommand{\cali}{{\mathcal I}}

\newcommand{\call}{{\mathcal L}}

\newcommand{\calo}{{\mathcal O}}
\newcommand{\calp}{{\mathcal P}}

\newcommand{\calr}{{\mathcal R}}
\newcommand{\cals}{{\mathcal S}}
\newcommand{\calt}{{\mathcal T}}
\newcommand{\calz}{{\mathcal Z}}

\newcommand{\Ext}{\operatorname{Ext}}
\newcommand{\Hom}{\operatorname{Hom}}

\DeclareMathOperator{\coker}{coker}
\DeclareMathOperator{\im}{im}

\DeclareMathOperator{\rk}{{rk}}

\DeclareMathOperator{\Pic}{{Pic}}
\DeclareMathOperator{\ext}{{ext}}
\DeclareMathOperator{\Coh}{{Coh}}

\newcommand{\lra}{\longrightarrow}
\newcommand{\into}{\hookrightarrow}
\newcommand{\onto}{\twoheadrightarrow}

\newtheorem{theorem}{Theorem}[section]
\newtheorem{mthm}{Main Theorem}
\newtheorem{proposition}[theorem]{Proposition}

\newtheorem{lemma}[theorem]{Lemma}
\newtheorem{corollary}[theorem]{Corollary}

\theoremstyle{definition}
\newtheorem{remark}[theorem]{Remark}
\newtheorem{example}[theorem]{Example}
\newtheorem{definition}[theorem]{{\bf Definition}}

\tikzcdset{scale cd/.style={every label/.append style={scale=#1},
    cells={nodes={scale=#1}}}}

\begin{document}

\title[Modular Serre correspondence via stable pairs]{Modular Serre correspondence via stable pairs}

\author{Marcos Jardim}
\address{IMECC - UNICAMP \\ Departamento de Matem\'atica \\
Rua S\'ergio Buarque de Holanda, 651\\ 13083-970 Campinas-SP, Brazil}
\email{jardim@ime.unicamp.br}
\author{Dapeng Mu}
\address{IMECC - UNICAMP \\ Departamento de Matem\'atica \\
Rua S\'ergio Buarque de Holanda, 651\\ 13083-970 Campinas-SP, Brazil}
\email{mudapeng1990@gmail.com}

\begin{abstract}
A stable pair on a projective variety consists of a sheaf and a
global section subject to stability conditions parameterized by rational
polynomials. We will show that for a smooth projective threefold and a
class of a rank 2 sheaf, there are two stability chambers (in the space of rational polynomials under the lexicographic order) for which the moduli spaces of semistable pairs admit morphisms to
a Gieseker moduli space of rank 2 semistable sheaves and a Hilbert scheme,
respectively. In the latter moduli space, every semistable pair corresponds to a closed subscheme of codimension 2 with an extension class, providing a generalization of the Serre correspondence. These two moduli spaces are related by finitely many wall-crossings. We provide explicit descriptions of those wall-crossings for certain fixed numerical classes. In particular, these wall-crossings preserve the connectedness of the moduli space of semistable pairs. 
\end{abstract}

\maketitle

\tableofcontents


\section{Introduction}

Serre correspondence, introduced in \cite{hartshorne1978stable}, established a connection between a rank $2$ vector bundle and a one-dimensional scheme on $\p3$.
Subsequent works, such as \cite{hartshorne1980stable}, \cite{arrondo2007home}, and \cite{comaschi2024instanton}, extended this correspondence to rank 2 reflexive sheaves, locally free of higher rank, and to torsion-free sheaves of arbitrary rank on smooth projective varieties, respectively.
For the rank $2$ case, let $L$ be a fixed line bundle on a smooth projective variety $X$ such that $H^1(L^{\vee})=H^2(L^{\vee})=0$. The Serre correspondence provides a one-to-one correspondence between:
\begin{enumerate}
\item pairs $(E,s)$ consisting of a rank $2$ torsion-free sheaf $E$ with $\det(E)=L$, and a non-trivial section $s\in H^0(X,E)$ whose zero scheme has codimension equal to 2. 
\item pairs $(Z, \xi)$ consisting of a codimension $2$ subscheme $Z\subset X$ and a section \linebreak
$\xi\in H^0(X, \omega_Z\otimes\omega_X^{-1}\otimes L^{\vee})$. 
\end{enumerate}
However, this correspondence is a set-theoretic one, and it does not involve moduli spaces or stability conditions. Nonetheless, the Serre correspondence has been widely used to construct sheaves on projective varieties with special properties, like instanton sheaves or Ulrich bundles. 
In addition, one can study the deformation of stable sheaves via the deformation of the associated subschemes given by the Serre correspondence \cite{Kleppe}; to mention one example, in \cite{jardim2017two,almeida2022irreducible} the authors proved the connectedness of the Gieseker moduli space for certain rank $2$ classes on $\p3$ precisely using the deformation of the corresponding schemes.

The primary objective of this paper is to demonstrate that the Serre correspondence admits a modular interpretation, namely, it can be presented as a morphism (or, more generally, a rational map) between suitable moduli spaces.
Specifically, this would involve defining moduli functors that assign to every scheme $T$ flat families of pairs $(\mathbf{E},\mathbf{s})$ and $(\mathbf{Z},\mathbf{\xi})$ over $X\times T$, respectively, such that the fibers over closed points recover the classical Serre correspondence, and proving that these functors admit good moduli spaces. 
Building on these foundational results, this work aims to construct a modular version of the Serre correspondence, for which the corresponding moduli space of pairs serves as a bridge between the Gieseker moduli space of rank 2 sheaves and the Hilbert scheme of codimension 2 subschemes.

Let $(X, \calo_X(1))$ be a smooth projective threefold over a field $\kappa$ of characteristic 0 with a fixed polarization. A \textit{coherent pair} $(E,s)$ on $X$ consists of a coherent sheaf $E\in \Coh(X)$ and a section $s\in H^0(X,E)$. 
To construct the moduli space of pairs, we use the family of stability parametrized by polynomials $\delta\in\Q[t]$ with positive leading coefficient, which was first introduced in \cite{huybrechts1992stable} for curves and surfaces, and then generalized to coherent systems in \cite{le1993syst, he1996espaces} and further studied in \cite{lin2018moduli, Wandel_2015}.
For a given coherent pair $(E,s)$, define its reduced Hilbert polynomial as 
\[
p_{(E,s)}^{\delta}(t):=\frac{P_E(t)+\varepsilon\delta}{\rk(E)}.
\]
where $\varepsilon=1$ if $s\neq 0$, and $\varepsilon=0$ otherwise. A pair $(F,s')$ is a sub-pair of $(E,s)$ if $F \overset{\iota}{\hookrightarrow} E$ is a sub-sheaf and the section $s$ factors through $s'$ up to a constant factor, i.e. $s = \lambda\cdot\iota\circ s'$ for some $\lambda\in\kappa^*$.
The pair $(E,s)$ is said to be \textit{$\delta$-(semi)stable} if every sub-pair $(F, s')\hookrightarrow
(E,s)$ satisfies $p_{(F, s')}^{\delta}(t)(\leq)<p_{(E, s)}^{\delta}(t)$, where polynomials are ordered lexicographically. 
For a given numerical class $v\in H^*(X, \mathbb{Q})$ encoding the Chern classes of $E$, the coarse moduli space $\cals^{\delta}_X(v)$ of $\delta$-semistable pairs with class $v$ was constructed in \cite{Wandel_2015, lin2018moduli}.
As $\delta$ varies, the moduli space $\cals^{\delta}_X(v)$ may not stay constant.
This naturally leads to the \emph{wall-crossing phenomenon}, which arises when a stability condition varies with a parameter.
The critical values for $\delta$ where the moduli space changes are called \emph{walls} and the region between two adjacent walls is called a \emph{chamber}.

In this paper, we focus on the wall-crossings for the moduli space 
$$ \cals^{\delta}_{X,L}(v):=\{(E,s)\in \cals^{\delta}_{X}(v)|\det(E)=L\}, $$
where $v$ is the class of a rank 2 torsion-free sheaf, and $L\in \Pic(X)$ is a fixed line bundle.
For any class $v$, there is a largest wall $W_{\emptyset}$, called the \emph{collapsing wall}, such that every pair is $\delta$-unstable for $\delta>W_{\emptyset}$. 
In the region $(0, W_{\emptyset})$, there are two significant walls $W_G\leq W_T$ that reveal interesting geometric interpretations for the moduli space $\cals^{\delta}_{X, L}(v)$, connecting the Gieseker moduli with the Hilbert scheme.

Denote by $\calg_X(v)$, $\calg_{X,L}(v)$, and $\calh_X(\tilde{P})$ the Gieseker moduli space for class $v$, the subscheme of $\calg_X(v)$ parameterizing semistable sheaves with fixed determinant $L$, and the Hilbert scheme for the fixed Hilbert polynomial $\tilde{P}$. The following statement presents our first main result, with Figure \ref{wall-crossing} illustrating the situation.

\begin{mthm}[Theorem \ref{Gie-Hilb}]
For a fixed rank $2$ class $v\in H^*(X, \mathbb{Q})$ and a fixed line bundle $L$, there is a collapsing wall $W_{\emptyset}\in \mathbb{Q}[t]_{>0}$ after which no pair is $\delta$-semistable. 
There are, potentially, two more walls $W_G$ and $W_T$ such that the moduli space $\cals^{\delta}_{X, L}(v)$ admits a morphism to $\calg_{X,L}(v)$ when $\delta\in (0,W_G)$ and a morphism to $\calh_X(\tilde{P})$ when $\delta\in(W_T, W_{\emptyset})$, where $\tilde{P}=P_{\ox}(t) + P_{L^{\vee}}(t) - P_{E\otimes L^{\vee}}(t)$.

\end{mthm}

\begin{figure}[ht]
\begin{tikzpicture}
\draw[thick, ->] (-0.5,0)--(10,0);
\draw (9,-0.2) -- (9, 0.2);
\node at (9, -0.5){$W_{\emptyset}$};
\draw (2,-0.2) -- (2, 0.2);
\node at (2, -0.5){$W_{G}$};
\node at (4.5, -0.5){$\cdots$};
\draw (7,-0.2) -- (7, 0.2);
\node at (7, -0.5){$W_{T}$};
\node at (0.5, 0.5){$\mathbb{Q}[t]-$axis for $\delta$};
\node at (0, -0.5){$0$};
\draw (0,-0.2) -- (0, 0.2);
\draw [decorate,decoration={brace,amplitude=5pt,mirror,raise=5ex}]
   (7,0) -- (9,0) node[midway,yshift=-4em]{``Hilbert" chamber};
\draw [decorate,decoration={brace,amplitude=5pt,mirror,raise=5ex}]
   (0,0) -- (2,0) node[midway,yshift=-4em]{``Gieseker" chamber};
\end{tikzpicture}
\caption{Wall-crossings for a rank 2 class.}
\label{wall-crossing}
\end{figure}
\begin{itemize}
\item In the \textit{Hilbert chamber} $(W_T, W_{\emptyset})$, every $\delta$-stable pair $(E,s)$ is \emph{saturated}, meaning that 
$\coker\{\ox\stackrel{s}{\to}E\}$ is a torsion-free sheaf of rank 1 (see Definition \ref{defn:sat} below), and it is therefore isomorphic to $\cali_Y\otimes\det(E)$ for some subscheme $Y\subset X$ of codimension at least 2.
\item In the \textit{Gieseker chamber} $(0, W_G)$, every $\delta$-stable pair $(E,s)$ is \emph{very stable}, which means that $E$ is Gieseker semistable, see Definition \ref{defn:vs} below.
\end{itemize}

The \textit{modular Serre correspondence} mentioned in the title of the paper arises on each irreducible component $\calp\subset \cals^{\delta}_{X, L}(v)$ where a general pair in $\calp$ is both saturated and very stable. 
For such components, there exist rational maps from $\calp$ both to (an irreducible component of) the Gieseker moduli space and to (an irreducible component of) the Hilbert scheme, as illustrated in Figure \ref{Serrecor}. 
This correspondence establishes a connection between the deformation of sheaves and the deformation of codimension 2 schemes. 
Notably, the components studied in \cite[Section 10]{almeida2022irreducible} are of this type. 
However, not every irreducible component of $\cals^{\delta}_{X, L}(v)$ admits both rational maps; nonetheless, we prove that every irreducible component of $\cals^{\delta}_{X, L}(v)$ admits at least one of these rational maps. In particular, there are actual morphisms $\Psi:\cals^{\delta}_{X, L}(v)\to\calg_{X,L}(v)$, when $\delta\in(0,W_G)$, and $\Gamma:\cals^{\delta}_{X, L}(v)\to\calh_X(\tilde{P})$, when $\delta\in(W_T,W_\emptyset)$, as stated in Main Theorem 1.

\begin{figure}[ht]
\begin{tikzcd}   &\calp\arrow[dl, dotted, "\Psi", swap]\arrow[dr, dotted, "\Gamma"]&\\
\calg_{X,L}(v)&&\calh_X(\tilde{P})\\
\end{tikzcd}
\caption{Modular Serre correspondence, where $\calp$ is an irreducible component of $\cals^{\delta}_{X,L}(v)$ whose general pair is both saturated and very stable. Here, $\Gamma(E,s)=\big(L/\coker(s)\big)\otimes L^\vee$, while
$\Psi(E,s)=E$.}
\label{Serrecor}
\end{figure}
We emphasize that the walls $W_G$ and $W_T$ may not occur at all (see the example in Section \ref{subsec:no-walls}), hence the word \textit{potentially} in the statement of Main Theorem 1. 

\bigskip

The second part of the paper is dedicated to studying how the moduli space of stable pairs $\cals^{\delta}_{X, L}(v)$ changes as $\delta$ crosses a wall. For the sake of concreteness, we will restrict ourselves to the case $X=\p3$, though a similar study can be done on other varieties once one possesses good descriptions of the Gieseker moduli space and of the corresponding Hilbert scheme. In addition, the careful consideration of the case $X=\p3$ is motivated by the observation that any Hilbert scheme $\calh_{\p3}(\tilde{P})$ is connected for any $\tilde P$, implying that the moduli of stable pairs $\cals_{\p3}^\delta(v)$ is also connected when $\delta\in(W_T,W_{\emptyset})$. 
If connectedness is preserved as $\delta$ crosses all the walls until the Gieseker chamber, then one concludes that the Gieseker moduli space is also connected.  

As $\delta$ moves away from the Hilbert chamber and gets smaller, it will cross a set of walls $\{W_i\}$ $(i=0,1,...,l_v)$ (Figure \ref{Figure:0-dim walls}) defined by the following short exact sequences of pairs:
\[
W_i: 0\lra (\mathcal{I}_{A_i}(1), 1)\lra (E, s)\lra (\mathcal{I}_{P_i}(c_1(v)-1), 0)\lra 0.
\]  
In the above sequences, $A_i\subset \p3$ is a planar one-dimensional scheme, $P_i\subset \p3$ is a zero-dimensional scheme of length $i$, and $l_v$ is an integer determined by class $v$.

\begin{figure}[ht]
\begin{tikzpicture}
\draw[->,thick] (-0.5,0)--(11,0);
\draw (10,-0.2) -- (10, 0.2);
\node at (10, -0.5){$W_{\emptyset}$};
\draw (9,-0.2) -- (9, 0.2);
\node at (9, -0.5){$W_{0}$};
\draw (8,-0.2) -- (8, 0.2);
\node at (8, -0.5){$W_{1}$};
\draw (7,-0.2) -- (7, 0.2);
\node at (7, -0.5){$W_{2}$};
\node at (5, -0.5){$\cdots$};
\node at (1.5, -0.5){$\cdots$};
\draw (3,-0.2) -- (3, 0.2);
\node at (3, -0.5){$W_{l_{v}}$};
\node at (0.5, 0.5){$\mathbb{Q}[t]-$axis for $\delta$};
\node at (0, -0.5){$0$};
\draw (0,-0.2) -- (0, 0.2);
\draw [decorate,decoration={brace,amplitude=5pt,mirror,raise=5ex}]
 (9,0) -- (10,0) node[midway,yshift=-4em]{``Hilbert" chamber};
\end{tikzpicture}
\caption{0-dimensional walls}
\label{Figure:0-dim walls}
\end{figure}

Correspondingly, this set of walls determines subsets within certain irreducible components of $\calh_{\p3}(\tilde{P})$. 
To be precise, let $\calh_{pl}$ be the union of the irreducible components of $\calh_{\p3}(\tilde{P})$ parameterizing the union of a planar curve with a 0-dimensional scheme not necessarily contained in the same plane as the curve. 
When $\calh_{pl}$ is irreducible, let $\mathcal{X}_i\subset \calh_{\p3}(\tilde{P})$ ($i=0,1,..., l_v$) be the subscheme consisting of one-dimensional schemes of the form$\{Y=Y_1\cup Q_i\}$, such that $Y_1$ is a 1-dimensional scheme contained in a plane $H$, and $Q_i$ is 0-dimensional scheme of length $i$ not contained in $H$.
Then, define $\mathcal{Z}_i:=\overline{\mathcal{X}_i}\backslash \overline{\mathcal{X}_{i-1}}$, ($\mathcal{X}_{-1}:=\emptyset$), and it follows that $\displaystyle\calh_{pl}=\bigcup_{i=0}^{l_v} \mathcal{Z}_i$.
Furthermore, denote by $\tilde{\calh}_{pl}\subset \cals^{\delta}_{\p3}(v)$ (resp. $\tilde{\mathcal{Z}}_i\subset \cals^{\delta}_{\p3}(v)$) the subscheme lying over $\calh_{pl}$ (resp. $\mathcal{Z}_i$).
We state our main result in the following proposition, which concerns the wall-crossings at $\{W_i\}$ and the correspondence between the set of walls $\{W_i\}$ and the stratification $\{\mathcal{Z}_i\}$ of the Hilbert scheme.
In this paper, a \textit{flip} is a rational map between two varieties that is an isomorphism outside subschemes of codimension $\geq 2$; it is realized by two small contractions to a common base.
A \textit{divisorial contraction} is a morphism collapsing a subvariety of codimension 1 to a subscheme of larger codimension.

\begin{mthm}[Theorem \ref{flipblowdownremoval}]
When $\delta$ crosses $W_i$ as it decreases, an open subscheme of $\tilde{\calh}_{pl}$ undergoes 
\begin{itemize}
    \item a flip for $i=0,1,2,..., l_v-2$,
    \item a divisorial contraction for $i=l_v-1$, and
    \item a removal of an open subscheme for $i=l_v$.
\end{itemize}
Wall-crossing at $W_i$ removes the one-dimensional schemes in $\mathcal{Z}_i$ from the Hilbert scheme. 
Furthermore, all these wall-crossings preserve the connectedness of the moduli space of semistable pairs.
\end{mthm}

To illustrate our results, we further explore concrete examples on $\p3$ to highlight the power of $\delta$-stability in linking the Gieseker moduli space for a rank $2$ class to the Hilbert scheme of space one-dimensional schemes. The selected examples present different wall-crossing phenomena, (birational) transformations of moduli spaces, and the strata in the Hilbert scheme associated with wall-crossings. To be precise:

\begin{itemize}
\item We begin in Section \ref{subsec:no-walls} with a simple case where no wall-crossings occur. In this case, the moduli space of stable pairs is a flag variety, and we interpret its two classical projections as fibrations over the Gieseker moduli and the Hilbert scheme.
\item The second example, presented in Section \ref{subsec:2}, involves two wall-crossings, both defined by 0-dimensional schemes, which are described in Section \ref{sec:critical}.
\item In the third example, presented in Section \ref{subsec:3}, we encounter a wall that is not defined by 0-dimensional schemes. 
\item The fourth example, presented in Section \ref{subsec:4}, demonstrates that wall-crossings can remove one-dimensional schemes that are non-reduced in the Hilbert scheme.
\item In the last example, presented in Section \ref{subsec:5}, we use wall-crossing techniques to prove that the Gieseker moduli space for the semistable rank $2$ sheaves with Chern classes $(c_1=0,c_2=2,c_2=2)$ has exactly two irreducible components, a result previously known to specialists but never formally proved and published.
\end{itemize}

\bigskip

The paper is organized as follows. 
In Section \ref{sec:pairs}, we will recall basic facts about coherent pairs and their stability conditions, and then introduce three special walls for a rank $2$ class.
In Section \ref{sec:moduli}, we will first review the moduli theory for coherent pairs. Next, we will show the general wall-crossing picture for a rank $2$ class on a smooth projective threefold, proving Main Theorem 1 and revealing the link between the Gieseker moduli space and the Hilbert scheme. 
In Section \ref{sec:wallx}, we will present general results concerning wall crossings, including the corresponding locally closed stratification in the Hilbert scheme, the (birational) transformations of the moduli space, and the generalized criterion for the stability of a sheaf.
In Section \ref{sec:critical}, we will give full descriptions of wall-crossings at the set of walls $\{W_i\}$ defined by $0-$dimensional schemes on $\p3$.
Lastly, in Section \ref{sec:ex}, we will present the complete wall-crossings for several rank 2 classes on $\p3$, along with an application in which we determine the number of components in two Gieseker moduli spaces, illustrating the utility of wall-crossing techniques.

\subsection*{Acknowledgments}
MJ is supported by the CNPQ grant number 305601/2022-9 and the FAPESP Thematic Projects 2018/21391-1 and 2021/04065-6. DM was supported by the FAPESP post-doctoral grant number 2020/03499-0 and the grant number 2023/06829-9.


\section{Coherent pairs} \label{sec:pairs}

In this section, we start with setting up all the notations and basic definitions studied throughout the paper, following \cite{he1996espaces,le1993syst}. 
Then, we introduce three special walls (critical values) for the stability parameter. The chambers defined by those walls have specific geometric interpretations and are closely related to the goal of this paper. 

Let $(X,\ox(1))$ be a smooth projective variety over an algebraically closed field $\kappa$; set $n=\dim(X)$. By a \textit{numerical class}, we mean a vector
$$ v=(v^0, v^1, ..., v^n)\in \mathbb{Z}\times \mathbb{Z}\times \frac{1}{2}\mathbb{Z}\times \cdots\times \frac{1}{n!}\mathbb{Z}; $$
$v$ is \textit{normalized} if $v^1/v^0\in(-1,0]$; and $v$ is a \textit{rank 2 class} if $v^0=2\cdot H^n$.

If $E$ is a sheaf, then we define
$$ v(E)=(\ch_0(E)\cdot H^n,\ch_1(E)\cdot H^{n-1},\ch_2(E)\cdot H^{n-2},\dots,\ch_n(E)). $$
If $v=v(E)$, then we also define the twisted class $v(k):=v(E\otimes\ox(k\cdot H))$ where $k\in \mathbb{Z}$.
As a convention, we keep using the notation, $c_i(v):=v^i$.

\subsection{Basic facts about coherent pairs}
A \textit{coherent pair} $(E,s)$ on $X$ consists of a coherent sheaf $E$ and a (possibly zero) global section $s\in H^0(E)$. We say that $(E,s)$ is \textit{pure} if $E$ is a pure sheaf and $s$ is a non-trivial section.

A morphism $(F,s')\to(E,s)$ between coherent pairs is a pair $(\phi,\lambda)$, where $\phi\in\Hom(F,E)$ and $\lambda\in\kappa^*$, such that $\phi\circ s'=\lambda\cdot s$, i.e. the following diagram commutes
$$ \xymatrix{ \ox \ar[r]^{\lambda}\ar[d]_{s'} & \ox \ar[d]^s \\
F \ar[r]^{\phi} & E}.$$
We call $(F,s')$ a sub-pair of $(E,s)$ if $F \overset{\iota}{\hookrightarrow} E$ is a sub-sheaf and the section $s$ factors through $s'$ up to a constant factor, i.e. $s = \lambda\cdot\iota\circ s'$ with $\lambda\in\kappa^*$. If $s'\neq 0$, then $s$ induces a zero section for the quotient sheaf $E/F$. So the corresponding \emph{quotient pair} is $(E/F, 0)$, and we have the following short exact sequence of pairs 
$$ 0 \longrightarrow (F,s') \longrightarrow(E,s) \longrightarrow (E/F,0) \longrightarrow 0. $$
If $s'=0$, then $s$ induces a non-zero section $s''$ of the quotient sheaf $E/F$. In this case, we have a sequence of pairs as follows
$$ 0 \longrightarrow (F,0) \longrightarrow (E,s) \longrightarrow (E/F,s'') \longrightarrow 0,  $$
where $s''$ is given by the composition $\ox\stackrel{s}{\to}E\onto E/F$.

\begin{remark}
To simplify notations, we also denote a pair $(E, s\neq 0)$ by $(E, 1)$ if there is no need to specify the nonzero section $s$.
\end{remark}

Let $\delta\in\Q[t]_{>0}$ be a rational polynomial with a positive leading coefficient. The reduced $\delta$-Hilbert polynomial of a coherent pair $(E,s)$ is defined by 
\begin{equation}{\label{Hilb poly}}
p_{(E,s)}^\delta(t):=\frac{P_E(t)+\epsilon\cdot\delta}{\rk(E)},
\end{equation}
where $P_E(t)$ and $\rk(E)$ denote the Hilbert polynomial and the rank of $E$ (in the sense of \cite[Section 1.2]{Huybrechts_1997}). $\epsilon$ is defined as 
\[
\epsilon=
\begin{cases}
1\ \text{if}\ s\neq0\\ 
0\ \text{if}\ s=0.\\ 
\end{cases}
\]

Following \cite[Definition 1.4]{Wandel_2015} and \cite[Definition 2.7]{lin2018moduli}, a coherent pair $(E,s)$ is \textit{$\delta$-(semi)stable} if it is pure, and every proper, nonzero sub-pair $(F, s')$ of $(E, s)$ satisfies the following inequality
$$ p_{(F,s')}^\delta(t) ~ (\leq) < ~ p_{(E,s)}^\delta(t) , $$
where “$\geq$" (or “$<$") indicates the lexicographic order on $\Q[t]$. A pair $(E,s)$ is \textit{strictly $\delta$-semistable} if it is $\delta$-semistable but not $\delta$-stable.

Analogous to Gieseker stability for coherent sheaves, Harder--Narasimhan and Jordan--H\"{o}lder filtrations exist for coherent pairs as well, see for instance \cite{lin2018moduli} or \cite[Proposition 2.3]{he1996espaces}. More precisely, for any pair $(E, s)$ on $X$ and any fixed $\delta\in \mathbb{Q}[t]_{> 0}$, there exists a unique filtration 
$$ 0 = (E_0, s_0) \subset (E_1, s_1) \subset \cdots  \subset (E_n, s_n)=(E, s) $$ 
such that $gr_i(E, s):=(E_i, s_i)/(E_{i-1}, s_{i-1})$ ($i=1,2,...,n$) are $\delta$-semistable with $p_{gr_1(E, s)}^\delta(t)>\cdots >p_{gr_n(E, s)}^\delta(t)$. Moreover, for a semistable pair $(E, s)$, there is a filtration
$$ 0 = (E_0, s_0) \subset (E_1, s_1) \subset \cdots  \subset (E_m, s_m)=(E, s) $$ 
such that the factors $gr_i(E, s)$ ($i=1,2...,m$) are $\delta$-stable with
$$ p_{gr_1(E, s)}^\delta(t)=\cdots=p_{gr_n(E, s)}^\delta(t). $$
Two semistable pairs $(E,s)$ and $(F,s')$ are S-equivalent if $$\bigoplus gr_i(E,s)\simeq\bigoplus gr_i(F,s').$$

In \cite[Section 1]{he1996espaces}, the author defined the functors $\Ext$ and $\mathcal{E}xt$ for coherent systems. The definitions rely on the construction of injective objects in the category of coherent systems. 
We skip this construction but just recall a result that provides the relation between extension classes of coherent sheaves and coherent pairs.
We will use this result for computations in later sections.
Interested readers may consult \cite[Theorem 1.3, Corollary 1.6]{he1996espaces} for more details.

\begin{lemma}[\cite{he1996espaces} Corollary 1.6]{\label{LES}}
If $\Lambda=(E, s)$ and $\Lambda'=(E', s')$ are two coherent pairs on $X$, then there is a long exact sequence as follows
\begin{align*}
0 &\lra \Hom(\Lambda, \Lambda') \lra \Hom(E, E') \lra \Hom(\mathbb{C}\cdot(s), H^0(E')/\mathbb{C}\cdot(s')) \lra \\
&\lra \Ext^1(\Lambda, \Lambda') \lra \Ext^1(E, E') \lra \Hom(\mathbb{C}\cdot(s), H^1(E')) \lra \\
 &\lra \Ext^2(\Lambda, \Lambda') \lra \Ext^2(E, E') \lra \Hom(\mathbb{C}\cdot(s), H^2(E')) \lra \cdots
\end{align*}
Here, $\mathbb{C}\cdot(s)$ denotes the $\mathbb{C}$-vector space over by the section $s$.
\end{lemma}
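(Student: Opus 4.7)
The plan is to derive the long exact sequence as the cohomology of a short exact sequence of complexes, built from a natural three-term exact sequence at the $\Hom$-level. Directly from the definition of a morphism of pairs, one has
\begin{equation*}
0 \to \Hom(\Lambda, \Lambda') \to \Hom(E, E') \xrightarrow{\alpha} \Hom(\mathbb{C}\cdot(s), H^0(E')/\mathbb{C}\cdot(s')),
\end{equation*}
where $\alpha(\phi)$ sends $s$ to the class of $H^0(\phi)(s)$ modulo $\mathbb{C}\cdot(s')$. Exactness here records precisely the compatibility $\phi\circ s = \lambda s'$ that distinguishes a morphism of pairs from an arbitrary sheaf morphism.

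To upgrade this to the claimed long exact sequence, I would use the fact, established in \cite{he1996espaces}, that the abelian category of coherent pairs has enough injectives with an explicit description: an injective pair is built from an injective sheaf equipped with its full space of global sections. Choosing an injective resolution $\Lambda' \to \mathcal{I}^\bullet = (J^\bullet, t^\bullet)$, two acyclicity properties follow from this description: the complex $J^\bullet$ is a resolution of $E'$ by sheaves that are simultaneously acyclic for $\Hom(E, -)$ and for $\Gamma$. Hence $\Hom(E, J^\bullet)$ computes $\Ext^i(E, E')$ and $H^0(J^\bullet)$ computes $H^i(E')$.

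Applying the first displayed exact sequence degreewise, I would then argue that
\begin{equation*}
0 \to \Hom(\Lambda, \mathcal{I}^\bullet) \to \Hom(E, J^\bullet) \xrightarrow{\alpha^\bullet} \Hom(\mathbb{C}\cdot(s), H^0(J^\bullet)/\mathbb{C}\cdot(t^\bullet)) \to 0
\end{equation*}
is a short exact sequence of complexes. The only non-trivial content is surjectivity of $\alpha^n$ in each degree, which exploits the injectivity of $\mathcal{I}^n$. Concretely, a class in $H^0(J^n)/\mathbb{C}\cdot(t^n)$ represented by some $\sigma \in H^0(J^n)$ determines a pair-morphism from the sub-pair $(\mathcal{O}_X, s) \hookrightarrow (E, s)$ into $\mathcal{I}^n$, which extends to all of $\Lambda$ by injectivity of $\mathcal{I}^n$; this extension provides a lift $\phi: E \to J^n$ realizing the prescribed class.

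Taking cohomology of this short exact sequence of complexes yields a long exact sequence which, under the identifications of the preceding paragraph, is exactly the statement of the lemma. Here one uses that $\Hom(\mathbb{C}\cdot(s), -)$ is exact, so the rank-one subcomplex $\mathbb{C}\cdot(t^\bullet)$ only shifts cohomology in degree zero and identifies $H^i$ of the third complex with $\Hom(\mathbb{C}\cdot(s), H^i(E'))$ for $i \geq 1$. The main obstacle is establishing surjectivity of $\alpha^n$ together with controlling the explicit structure of injective coherent pairs, which is precisely the content of the foundational constructions in \cite{he1996espaces} and where the technical heart of the argument lies.
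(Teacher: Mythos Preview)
The paper does not give its own proof of this lemma: it is stated as a citation of \cite[Corollary 1.6]{he1996espaces}, with the explicit remark that the construction of injective objects in the category of coherent systems is skipped and the reader is referred to He's paper for details. Your proposal is a correct sketch of He's original argument, relying precisely on the injective-resolution machinery the paper alludes to but omits; there is nothing to compare against in the present paper.
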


\subsection{Stability of pairs vs. stability of sheaves}

The next two results provide simple examples of $\delta$-stable pairs. Recall that if $E$ is a torsion-free sheaf and $F\into E$ is a subsheaf, then
$$ p_E(t) - p_{F}(t) = \big(\mu(E)-\mu(F)\big) \cdot t^{n-1} + \text{lower order terms}, $$
where $\mu(E)=\deg(E)/\rk(E)$ denotes the Mumford slope of $E$.
\begin{lemma}
Assume that $E$ is a $\mu$-stable torsion-free sheaf with $h^0(E)>0$, and let $s\in H^0(E)$ be a non-trivial section.
If $\deg(\delta)<n-1$, then the pair $(E,s)$ is $\delta$-stable.
\end{lemma}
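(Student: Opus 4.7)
The plan is to verify $\delta$-stability by enumerating the proper nonzero sub-pairs of $(E,s)$ and comparing reduced $\delta$-Hilbert polynomials term by term, using that the Mumford slope sits in degree $n-1$, which is strictly higher than $\deg(\delta)$. First I would note that $(E,s)$ is pure: $E$ is torsion-free, hence pure of dimension $n$, and $s \neq 0$ by assumption.

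Next I would split the possible sub-pairs $(F,s')\hookrightarrow(E,s)$ according to the two short exact sequences described in the coherent-pair setup. Case (i): $s'\neq 0$, in which case $F\subseteq E$ is a subsheaf through which $s$ factors, and we may take $F\subsetneq E$ proper (so $0<\rk(F)\leq\rk(E)$, with $\mu(F)<\mu(E)$ in all cases by $\mu$-stability, combined with the torsion-free hypothesis to rule out $\rk(F)=\rk(E)$ with $\mu(F)=\mu(E)$). Case (ii): $s'=0$ and $F\subsetneq E$ is a proper nonzero subsheaf, again yielding $\mu(F)<\mu(E)$. Case (iii): the edge case $F=E$ with $s'=0$, i.e. the sub-pair $(E,0)\subset(E,s)$. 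In cases (i) and (ii) a direct computation gives
\[
p^\delta_{(F,s')}(t)-p^\delta_{(E,s)}(t)=\bigl(p_F(t)-p_E(t)\bigr)+R(t),
\]
where $R(t)\in\mathbb{Q}[t]$ collects the $\delta$-correction (with coefficients $\tfrac{1}{\rk F}-\tfrac{1}{\rk E}$ or $-\tfrac{1}{\rk E}$), so $\deg R\leq\deg\delta<n-1$. By $\mu$-stability the difference $p_F-p_E$ has leading term of degree exactly $n-1$ with strictly negative coefficient proportional to $\mu(F)-\mu(E)<0$; therefore the right-hand side is dominated by this $(n-1)$-st order term and is strictly negative in lexicographic order. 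In case (iii), the difference reduces to $-\delta/\rk(E)$, which has negative leading coefficient since $\delta$ does, hence is again $<0$ in lex order.

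There is no real obstacle here: the argument is bookkeeping once the right case distinction is made. The only subtle point is making sure the equal-rank subsheaves in case (i) are covered (handled via torsion-freeness) and that the case $(E,0)\subsetneq(E,s)$ is not overlooked; both are dispatched by the same leading-term principle, namely that the assumption $\deg(\delta)<n-1$ places the $\delta$-perturbation strictly below the slope term where $\mu$-stability acts.
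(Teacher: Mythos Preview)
Your argument is essentially the paper's own proof, carried out with more care: the paper simply asserts that $p_{(E,s)}^\delta - p_{(F,s')}^\delta$ has degree $n-1$ with leading coefficient $\mu(E)-\mu(F)>0$, without separating out your case (iii) or the equal-rank sub-case. One small correction: torsion-freeness of $E$ does \emph{not} rule out proper $F\subsetneq E$ with $\rk(F)=\rk(E)$ and $\mu(F)=\mu(E)$ (take $E/F$ supported in codimension $\ge 2$), but this does not harm your conclusion, since in that situation the $\delta$-correction $R(t)$ vanishes and $p_F-p_E=-P_{E/F}/\rk(E)<0$ outright.
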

\begin{proof}
When $\deg(\delta)<n-1$ we have that $p_{(E,s)}^\delta(t) - p_{(F,s')}^\delta(t)$ (where $s'$ is possibly trivial) has degree $n-1$ and its leading coefficient is precisely $\mu(E)-\mu(F)$; this difference is positive for any $F$ because $E$ is $\mu$-stable. Therefore, $p_{(E,s)}^\delta(t) - p_{(F,s')}^\delta(t)>0$ and the pair $(E,s)$ must be $\delta$-stable.
\end{proof}

\begin{proposition} \label{prop:stability}
Let $E$ be a torsion-free sheaf of rank $r$ with $h^0(E)>0$, and fix $\delta<1/(\rk(E)-1)$. If a pair $(E,s)$ is $\delta$-stable, then $E$ is semistable.
\end{proposition}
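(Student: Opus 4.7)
My plan is to argue by contradiction: assume $E$ is not Gieseker semistable, so there exists a proper subsheaf $F\subsetneq E$ whose reduced Hilbert polynomial satisfies $p_F(t)>p_E(t)$ in the lexicographic order, and use $F$ to produce a sub-pair of $(E,s)$ that violates $\delta$-stability. Write $r=\rk(E)$. Because the hypothesis $\delta<1/(r-1)$ is a lex comparison of $\delta$ with a nonzero constant in $\mathbb{Q}[t]$, it already forces $\delta$ itself to be a positive rational constant.

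I would then split into two cases according to whether the section $s$ lifts to $F$. If $s$ factors through $F$, the sub-pair $(F,s|_F)$ has reduced $\delta$-Hilbert polynomial $p_F+\delta/\rk(F)$, and $\delta$-stability forces
\[ p_F - p_E < \delta\left(\frac{1}{r}-\frac{1}{\rk(F)}\right), \]
whose right-hand side is negative since $\rk(F)<r$, directly contradicting $p_F>p_E$. If instead $s$ does not lift to $F$, the natural sub-pair is $(F,0)$, and $\delta$-stability only yields the weaker inequality $p_F - p_E < \delta/r$ in lex.

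For this second case I would analyze $p_F-p_E$ by its degree. Since $F\subset E$ is torsion-free of the same support dimension as $E$, the leading coefficients of the reduced Hilbert polynomials coincide, so $\deg(p_F-p_E)<n$. If this degree is at least $1$, the positive leading coefficient of $p_F-p_E$ dominates the constant $\delta/r$ in the lex order, a contradiction. The main obstacle is the degree-$0$ subcase: when $p_F-p_E$ equals a positive constant $a$, I would clear denominators to see that $a\cdot r\cdot\rk(F) = r\cdot P_F-\rk(F)\cdot P_E$ is an integer-valued constant polynomial, hence a positive integer; this forces $a\geq 1/(r\,\rk(F))\geq 1/(r(r-1))$, and combining with $a<\delta/r$ yields $\delta>1/(r-1)$, contradicting the hypothesis. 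The threshold $1/(r-1)$ is essentially sharp precisely because of this integrality constraint on the degree-$0$ part of $p_F-p_E$.
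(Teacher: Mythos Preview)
Your proof is correct and follows essentially the same route as the paper's: both argue by contradiction, pick a Gieseker-destabilizing subsheaf $F$, and reduce to the degree-$0$ case of $p_F-p_E$, where the integrality of $r\,P_F-\rk(F)\,P_E$ forces $p_F-p_E\ge 1/(r\,\rk(F))\ge 1/(r(r-1))$, contradicting $\delta<1/(r-1)$. The only difference is that you split into two cases according to whether $s$ lifts to $F$; the paper skips your Case~1 and uses the sub-pair $(F,0)$ directly, which in the category of coherent pairs is always a legitimate sub-object of $(E,s)$ regardless of whether $s$ factors through $F$---so your Case~1 is harmless but unnecessary.
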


\begin{proof}
If $E$ is not semistable, let $F\into E$ be a destabilizing sheaf, so that $p_E(t)$ is smaller than $p_{F}(t)$. Then $(F,0)\into(E,s)$ is a sub-pair, and 
$$ p_{(E,s)}^\delta(t) - p_{(F,0)}^\delta(t) = p_E(t) - p_{F}(t) + \dfrac{1}{\rk(E)}\delta.$$
There are two possibilities.
\begin{itemize}
\item if $\deg\big(p_E(t)-p_{F}(t)\big)>0$ (which occurs when $E$ is not $\mu$-semistable, for instance), then also  $p_{(E,s)}^\delta(t)-p_{(F,0)}^\delta(t)<0$ because $\deg(\delta)=0$;
\item if $\deg\big(p_E(t)-p_{F}(t)\big)=0$, then in fact
\begin{equation}\label{eq:diff-p}
p_E(t)-p_{F}(t) = \dfrac{\chi(E)}{\rk(E)} - \dfrac{\chi(F)}{\rk(F)} \in \Z\left[ \dfrac{1}{\rk(E)\rk(F)}\right],
\end{equation}
thus $p_E(t)-p_{F}(t)<0$ if and only if $p_E(t)-p_{F}(t)\le-1/\rk(E)\rk(F)$. It follows that
$$ p_{(E,s)}^\delta(t) - p_{(F,0)}^\delta(t) \le \dfrac{-1}{\rk(E)\rk(F)} + \dfrac{1}{\rk(E)}\delta = \dfrac{1}{\rk(E)} \left(\dfrac{-1}{\rk(F)} + \delta \right).$$
By the definition of stability of a sheaf $E$, it is enough to consider sub-sheaves $F$, s.t. $\rk(F)\leq \rk(E)-1$. Therefore,
$$p_{(E,s)}^\delta(t) - p_{(F,0)}^\delta(t)\leq \frac{1}{\rk(E)}\left(\frac{-1}{\rk(E)-1}+\delta \right)< 0, $$
and the pair $(E,s)$ is not $\delta$-stable.
\end{itemize}
This completes the proof.
\end{proof}

\begin{proposition} \label{prop:converse}
Let $E$ be a torsion-free sheaf of rank $r$ with $h^0(E)>0$, and fix $\delta<1/\rk(E)$. If $E$ is stable, then, for every non-trivial section $s\in H^0(E)$, the pair $(E,s)$ is $\delta$-stable.
\end{proposition}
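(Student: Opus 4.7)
The plan is to verify the $\delta$-stability inequality for every proper nonzero sub-pair $(F,s')\hookrightarrow(E,s)$; purity of $(E,s)$ is immediate, since $E$ is torsion-free and $s\neq 0$. Following the discussion of sub-pairs in the paper, I split into two cases according to whether $s'=0$ or $s'\neq 0$ (in which case $s$ factors through $F$ and we may identify $s'$ with $s$).

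\textbf{Case 1:} $s'=0$. A direct computation gives
\[
p^{\delta}_{(E,s)}(t) - p^{\delta}_{(F,0)}(t) = \big(p_E(t)-p_F(t)\big) + \frac{\delta}{\rk(E)}.
\]
Gieseker stability of $E$ yields $p_E-p_F>0$ lexicographically, and since $\delta>0$ is a positive constant, the whole expression is positive. This case does not use the bound on $\delta$.

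\textbf{Case 2:} $s'\neq 0$. Here
\[
p^{\delta}_{(E,s)}(t) - p^{\delta}_{(F,s')}(t) = \big(p_E(t)-p_F(t)\big) + \delta\left(\frac{1}{\rk(E)}-\frac{1}{\rk(F)}\right).
\]
If $\rk(F)=\rk(E)$, the $\delta$-correction vanishes and positivity follows again from stability of $E$. If $\rk(F)<\rk(E)$, the $\delta$-correction becomes a negative constant, so the argument bifurcates on the degree of $p_E-p_F$. When $\deg(p_E-p_F)\ge 1$, i.e.\ $\mu(F)<\mu(E)$, the polynomial part dominates and the inequality follows. When $\deg(p_E-p_F)=0$, I invoke the integrality argument already used in the proof of Proposition \ref{prop:stability}: as in \eqref{eq:diff-p}, $p_E-p_F\in\frac{1}{\rk(E)\rk(F)}\Z$, so $p_E-p_F\ge \frac{1}{\rk(E)\rk(F)}$; then
\[
p^{\delta}_{(E,s)}(t)-p^{\delta}_{(F,s')}(t)\;\ge\;\frac{1-\delta\big(\rk(E)-\rk(F)\big)}{\rk(E)\rk(F)},
\]
which is strictly positive as long as $\delta<1/(\rk(E)-\rk(F))$. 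The worst case is $\rk(F)=1$, giving $\delta<1/(\rk(E)-1)$; since the hypothesis $\delta<1/\rk(E)$ is strictly stronger, the inequality holds throughout.

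The only real technical point is the integrality step in the subcase $\mu(F)=\mu(E)$, $\rk(F)<\rk(E)$, which parallels the crucial estimate in Proposition \ref{prop:stability}. Everything else is a direct comparison of reduced $\delta$-Hilbert polynomials that converts Gieseker stability of $E$ into $\delta$-stability of $(E,s)$.
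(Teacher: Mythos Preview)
Your proof is correct and follows essentially the same route as the paper: the same case split on whether $s'=0$ or $s'\neq0$, and the same integrality argument in the critical subcase $\deg(p_E-p_F)=0$. The only structural difference is that you argue directly while the paper argues by contrapositive (assuming a destabilizing sub-pair and deducing that $E$ is not stable); you also handle the subcase $\rk(F)=\rk(E)$ explicitly and observe that the argument actually only needs $\delta<1/(\rk(E)-1)$, which the paper's proof supports but does not state.
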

\begin{proof}
Assume that the pair $(E,s)$ with $E$ being a torsion-free sheaf is not $\delta$-stable; let $(F,s')$ be the destabilizing sub-pair. Again, one must analyze two possibilities.

First, if $s'=0$ then
$$ 0 \ge p_{(E,s)}^\delta(t) - p_{(F,0)}^\delta(t) = p_E(t) - p_{F}(t) + \dfrac{1}{\rk(E)}\delta > p_E(t) - p_{F}(t) $$
because $\delta>0$, so $F$ destabilizes $E$, and $E$ is not stable.

When $s'\ne0$, then 
$$ 0 \ge p_{(E,s)}^\delta(t) - p_{(F,0)}^\delta(t) = p_E(t) - p_{F}(t) + \left(\dfrac{1}{\rk(E)}-\dfrac{1}{\rk(F)}\right)\delta. $$
Since $\deg(\delta)=0$, we have that
\begin{equation} \label{eq:degs}
\deg\left( p_{(E,s)}^\delta(t) - p_{(F,0)}^\delta(t) \right) = \deg \Big( p_E(t) - p_{F}(t) \Big).
\end{equation}
Again, there are two cases to be analyzed.
\begin{itemize}
\item if the degrees in display \eqref{eq:degs} are positive, then $p_E(t) - p_{F}(t)<0$, so $E$ is not stable.
\item if the degrees in display \eqref{eq:degs} are zero, then
$$ p_E(t) - p_{F}(t) = \dfrac{\rk(F)\chi(E)-\rk(E)\chi(F)}{\rk(E)\rk(F)} \le  \left(\dfrac{1}{\rk(F)}-\dfrac{1}{\rk(E)}\right)\delta. $$
It follows that
$$ \rk(F)\chi(E)-\rk(E)\chi(F) \le \big(\rk(E)-\rk(F)\big)\delta < 1 - \dfrac{\rk(F)}{\rk(E)}<1. $$
Since the left-hand side of the inequality is an integer, we have that  
$\rk(F)\chi(E)-\rk(E)\chi(F)\le0$, implying again that $F$ destabilizes $E$.
\end{itemize}
\end{proof}

\begin{proposition} \label{prop:semistable}
Let $E$ be a properly semistable torsion-free sheaf. 
For any fixed $\delta\in \mathbb{Q}_{>0}$, a pair $(E,s)$ is $\delta$-stable if for any $E\stackrel{g}{\onto}G$, where $G$ is semistable with $p_E(t)=p_G(t)$, we have $g\circ s\ne 0$. 
In particular, $E$ is indecomposable.
\end{proposition}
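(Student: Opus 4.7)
The plan is to verify the $\delta$-stability inequality $p^\delta_{(F,s')}(t)<p^\delta_{(E,s)}(t)$ (lex order) for every proper nonzero sub-pair $(F,s')\hookrightarrow(E,s)$. First I would reduce to the case that $F$ is saturated in $E$: passing to the saturation $\bar F$ only increases $p_F$ in lex order and preserves the factorization of $s$ through $F\hookrightarrow\bar F$, so it suffices to verify the inequality on saturated sub-pairs; in particular $E/F$ is torsion-free. Semistability of $E$ then yields $p_F\le p_E$.

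Next I would carry out a case analysis. If $s'=0$, the inequality is immediate: $p^\delta_{(F,0)}=p_F\le p_E<p_E+\delta/\rk(E)=p^\delta_{(E,s)}$, using $\delta>0$. If $s'\ne 0$ and $p_F<p_E$ with $\deg(p_F-p_E)\ge 1$, then the negative leading coefficient of $p_F-p_E$ dominates the constant $\delta$-correction in lex order and the inequality holds automatically.

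The decisive case is $s'\ne 0$ with $p_F=p_E$, and this is where the hypothesis enters. With $F$ saturated and $p_F=p_E$, standard semistable-sheaf theory shows that $E/F$ is torsion-free and semistable with $p_{E/F}=p_E$. Taking $G:=E/F$ and $g\colon E\twoheadrightarrow E/F$ the quotient map, one has $g\circ s=0$ because $s$ factors through $F=\ker g$, directly contradicting the hypothesis applied to $G$. Hence no such sub-pair exists and $(E,s)$ is $\delta$-stable.

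The main obstacles I anticipate are twofold. First, the borderline subcase within the strict-inequality analysis where $p_F-p_E$ is itself a nonzero constant of degree zero: there the $\delta$-correction could a priori offset the constant difference, and I expect to handle it by a further reduction using the Harder--Narasimhan filtration of the torsion-free quotient $E/F$ to produce a semistable quotient of $E$ with $p=p_E$ still killing $s$, once again invoking the hypothesis. Second, the indecomposability corollary: given a decomposition $E=E_1\oplus E_2$, each $E_i$ is semistable with $p_{E_i}=p_E$, and the projections $\pi_i\colon E\twoheadrightarrow E_i$ are quotients of the required type, so the hypothesis forces both components $s_1,s_2$ to be nonzero; I would then construct a non-scalar pair endomorphism from the idempotents of the decomposition (suitably twisted to commute with $s$) to contradict the simplicity implied by the $\delta$-stability just established.
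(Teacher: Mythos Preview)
Your overall approach mirrors the paper's: argue by contradiction from a destabilizing sub-pair $(F,s')$, rule out $s'=0$ using semistability of $E$, and then apply the hypothesis to $G=E/F$. The paper's proof is terser but follows the same line; in particular it also takes $G=E/F$ and invokes the hypothesis to reach a contradiction.

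The obstacle you flag is real, and your proposed fix does not work. In the subcase $s'\neq0$, $F$ saturated, $p_F-p_E$ a strictly negative constant, you want to pass from $E/F$ to a semistable quotient of $E$ with $p=p_E$ still killing $s$; but if $E/F$ is already semistable (e.g.\ rank~$1$) there is nothing to refine, and there need be no such quotient at all. Concretely, take $X=\mathbb{P}^1$, $E=\mathcal{O}(1)^{\oplus 2}$, and $s=(s_1,s_2)$ with $s_1,s_2\in H^0(\mathcal{O}(1))$ linearly independent. Every surjection $g\colon E\twoheadrightarrow\mathcal{O}(1)$ has $g\circ s=\alpha s_1+\beta s_2\neq0$, so the hypothesis holds; yet the saturated sub-pair $(\mathcal{O},s)\hookrightarrow(E,s)$ gives $p^\delta_{(\mathcal{O},s)}-p^\delta_{(E,s)}=-1+\delta/2$, which is $\geq0$ once $\delta\geq2$. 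So the statement fails for large $\delta$. (The paper's proof shares this gap: it never verifies that $G=E/F$ has $p_G=p_E$, which is what the hypothesis requires.) The integrality trick from the preceding propositions shows the result is salvageable under the extra assumption $\delta<1/(\rk(E)-1)$: then $p_F-p_E\in\frac{1}{\rk(E)\rk(F)}\mathbb{Z}$ forces the constant-difference destabilization to require $\delta\geq\frac{1}{\rk(E)-\rk(F)}\geq\frac{1}{\rk(E)-1}$, eliminating that case.

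For indecomposability, the paper argues more directly---claiming that a splitting $E=F\oplus G$ produces a destabilizing sub-pair $(F,s')$ with $s'=\pi_F(s)$---but this is only a sub-pair when $\pi_G(s)=0$, which the hypothesis excludes. Your route via simplicity hits the same wall: the idempotents $e_1,e_2$ of the splitting satisfy $e_i\circ s=s$ only if the other component of $s$ vanishes, and any combination $a e_1+b e_2$ commutes with $s$ exactly when $a=b$, yielding only scalars. The $\mathcal{O}(1)^{\oplus2}$ example above is decomposable, satisfies the hypothesis, and has $(E,s)$ $\delta$-stable for $\delta<2$, so the ``in particular'' clause is false as stated.
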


\begin{proof}
Suppose $(E,s)$ is unstable for some $\delta>0$. Let 
$$ 0 \lra (F,s') \lra (E,s) \lra (G, s'') \lra 0 $$
be a short exact sequence of pairs in which 
$(F,s')$ destabilizes $(E,s)$. 
We have the following inequality 
\begin{equation}{\label{ineq:destabilize}}
0<p_{F,s'}^{\delta}-p_{E,s}^{\delta}=p_F(t)-p_E(t)+\left(\frac{1}{\rk(F)}-\frac{1}{\rk(E)}\right)\delta.
\end{equation}
Because $E$ is properly semistable, we have $p_F(t)-p_E(t)\leq 0$, and the equality holds if and only if $F$ (resp. $G$) is extended by the Jordan-H\"{o}lder factors of $E$. 
Therefore, we must have $\left(\frac{1}{\rk(F)}-\frac{1}{\rk(E)}\right)\delta>0$, and in particular, $s'\neq 0$. This indicates that $g\circ s=0$, which contradicts the assumption that $g\circ s\neq 0$. 

Suppose $E$ splits as $E\simeq F\oplus G$ with $p_E(t)=p_F(t)=p_G(t)$, and assume, without loss of generality, that the restriction of $s$ to $F$ (call it $s'$) is nonzero.
Then it is straightforward to check that $(F, s')\hookrightarrow (E,s)$ destabilizes $(E,s)$, hence $(E,s)$ is unstable.
\end{proof}

\begin{example}
Set $X=\p3$ and $C,D\subset\p3$ be closed, connected subschemes of codimension at least 2 with $P_{\mathcal{O}_{C}}(t)=P_{\mathcal{O}_{D}}(t)$; assume that $C$ is planar, but $D$ is not. If $E$ is given by an extension of the form
$$ 0 \lra \cali_C(1) \lra E \lra \cali_D(1) \lra 0 , $$
then $H^0(E)=H^0(\cali_C(1))$, thus there is no section $s\in H^0(E)$ for which $(E,s)$ is $\delta$-stable. On the other hand, if $E$ is given by an extension of the form
$$ 0 \lra \cali_D(1) \lra E \lra \cali_C(1) \lra 0 , $$
then $H^0(E)\simeq H^0(\cali_C(1))$, thus $(E,s)$ is $\delta$-stable for every $s\in H^0(E)$. 
\end{example}


\subsection{Critical values (walls) for $\delta$} \label{subsec:critical}

For any given numerical class $v$, we define a wall (critical value) as follows.
\begin{definition}
    A positive polynomial $W\in \mathbb{Q}[t]_{>0}$ is called a \textit{wall} or a \textit{critical value} for $v$ if there exists a strictly $\delta$-semistable pair $(E,s)$ for $\delta=W$ such that $v(E)=v$. 
\end{definition}

\begin{remark}
    If $W\in \mathbb{Q}[t]_{>0}$ is a critical value or a wall on the rational polynomial axis, then the sets of $\delta$-stable objects for $\delta>W$ and $\delta<W$ are different in general. 
    What follows is an example illustrating this.
\end{remark}

The first wall to observe is defined by the natural sub-pair $(\ox, \mathbf{1}_{\ox})\into (E,s)$
($s\ne0$). 
Note that
$$ p_{(E,s)}^\delta(t) - p_{(\ox, \mathbf{1}_{\ox})}^\delta(t) = \dfrac{1}{\rk(E)}P_E - P_{\ox} + \dfrac{1-\rk(E)}{\rk(E)}\delta . $$
Set the above equation equal to zero. The solution for $\delta$ is 
\[
\delta=W_{P_E}:=\dfrac{1}{\rk(E)-1}\Big( P_E - \rk(E)\cdot P_{\ox}(t) \Big),
\]
which indicates the following three situations
\begin{enumerate}
    \item[$\delta>W_{P_E}$:] Every pair $(E,s)$ is unstable, destabilized by $(\ox, \mathbf{1}_{\ox})$.
    \item[$\delta=W_{P_E}$:] There exists semistable pairs $(E,s)\simeq (\calo_X,\mathbf{1}_{\ox})\oplus (\cali_A(c_1(v)),0)$ ($A\subset X$ is a one-dimensional scheme).
    \item[$\delta<W_{P_E}$:] There exists stable pairs $(E,s)$ under some reasonable assumptions.
\end{enumerate}

Therefore, $\delta=W_{P_E}$ is a \textit{critical value} in the sense that when $\delta$ varies in $\mathbb{Q}[t]_{>0}$, the set of $\delta$-stable pairs is changed when $\delta$ crosses $W_{P_E}$. 
Moreover, this also motivates the following definition of a \textit{collapsing wall}.

\begin{definition}{\label{colwall}}
The \textit{collapsing wall} for any given class $v$ is defined by the destabilizing pair $(\ox, \mathbf{1}_{\ox})\hookrightarrow (E,s)$, where $v(E)=v$. 
The corresponding critical value is denoted by $W_{\emptyset}$ and expressed as
$$W_{\emptyset}=\dfrac{1}{\rk(E)-1}\Big( P_E - \rk(E)\cdot P_{\ox}(t) \Big).$$
Every pair $(E,s)$ with $v(E)=v$ is $\delta$-unstable for $\delta>W_{\emptyset}$.
\end{definition}

\begin{remark}{\label{k>0}}
In practice, we will always work with a class $v(k)$ for which $h^0(E(k))>0$; otherwise, there would not exist nontrivial stable pairs. In this case, the collapsing wall for $v(k)$ is given by

\begin{align*}
W_{\emptyset}=&\displaystyle\frac{1}{\rk(E)-1}\cdot(P_{E(k)}(t)-\rk(E)\cdot P_{\ox}(t)) \\
=&\displaystyle\frac{\deg(E)+k\cdot\rk(E)}{(\rk(E)-1)(n-1)!}\cdot t^{n-1}+\text{lower order terms},\\
\end{align*}
which is positive if $k>0$. 
In this case, we have a region $\delta\in (0, W_{\emptyset})$ in which there are interesting wall-crossings.
\end{remark}

Following Remark \ref{k>0}, one can prove that there are only finitely many walls in the chamber $(0, W_{\emptyset})$. It was proved in \cite[Theorem 4.2]{he1996espaces} for coherent systems. 
We state her result below for the case of stable pairs. 

\begin{theorem}[\cite{he1996espaces}]{\label{Thm:finite}}
    For a given numerical class $v$ (or equivalently $P\in \mathbb{Q}[t]_{>0}$), there are only a finite number of critical values (walls) for class $v$. All such critical values have degree less than the degree of $P$. Moreover, the moduli space remains constant between two consecutive critical values. 
\end{theorem}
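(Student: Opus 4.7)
The plan is to follow the strategy of \cite[Section 4]{he1996espaces}: reduce each wall to discrete numerical data, then invoke boundedness to conclude finiteness.

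First I would characterize a wall explicitly. If $W\in\mathbb{Q}_{>0}[t]$ is a wall, then there exist a strictly $W$-semistable pair $(E,s)$ with $v(E)=v$ and a proper sub-pair $(F,s')\hookrightarrow (E,s)$ with $p^{W}_{(F,s')}=p^{W}_{(E,s)}$. Writing $r=\rk(E)$ and $r'=\rk(F)$, the equation solves explicitly for $W$:
\begin{itemize}
\item if $s'=0$ and $s\ne 0$, then $W=(rP_F-r'P_E)/r'$;
\item if $s,s'\ne 0$, then $W=(rP_F-r'P_E)/(r'-r)$.
\end{itemize}
In either case the numerator $rP_F-r'P_E$ has degree strictly less than $n=\deg(P)$, since the reduced Hilbert polynomials $P_E/r$ and $P_F/r'$ share the same leading coefficient (that of $P_{\ox}$, because ``rank'' is defined via the multiplicity of $\ox$). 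This immediately gives the degree bound $\deg(W)\le n-1<\deg(P)$.

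For finiteness, it suffices to bound the pair $(r',P_F)$: the rank already satisfies $1\le r'\le r-1$, so only $P_F$ needs to be bounded. The key input is boundedness of the family of all $\delta$-semistable pairs of class $v$ with $\delta$ ranging over $(0,W_\emptyset)$, the analog for pairs of the boundedness used in \cite[Section 4]{he1996espaces} and developed in \cite{lin2018moduli,Wandel_2015}. Given such boundedness, the underlying sheaves $E$ vary in a bounded family, and hence so do their sub-sheaves $F$ of rank $r'$ whose slope is controlled by the semistability of $(E,s)$, via Grothendieck's lemma on sub-sheaves with bounded maximal slope. Thus only finitely many Hilbert polynomials $P_F$ can occur, and the explicit formulas above then yield only finitely many walls $W$.

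Finally, constancy of the moduli space between two consecutive critical values $W_i<W_{i+1}$ is essentially tautological: by construction, for $\delta$ in this open interval no proper sub-pair $(F,s')$ of a pair $(E,s)$ with $v(E)=v$ achieves $p^\delta_{(F,s')}=p^\delta_{(E,s)}$. Hence the class of $\delta$-(semi)stable pairs and the $S$-equivalence relation depend only on the chamber, so the moduli functor and its coarse moduli space are unchanged. The main obstacle is the uniform boundedness of $\delta$-semistable pairs across the whole interval $(0,W_\emptyset)$ rather than chamber by chamber; this is the technical heart of the argument, and once it is in place the remaining steps follow formally from the explicit wall formulas and standard Grothendieck-type boundedness.
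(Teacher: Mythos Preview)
The paper does not give its own proof of this theorem: it is quoted from \cite[Theorem 4.2]{he1996espaces} and stated without argument. Your sketch is a faithful outline of the proof in that reference---extracting the wall as a rational function of the numerical data $(r',P_F)$, using the leading-term cancellation to bound $\deg(W)$, and invoking boundedness plus Grothendieck's lemma to pin down finitely many $P_F$---so there is nothing to compare against in the present paper; your approach \emph{is} the cited one.

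One small point worth tightening: your rank bound $1\le r'\le r-1$ tacitly excludes the cases $r'=0$ (torsion sub-sheaf) and $r'=r$ (torsion quotient). The first is harmless since a pure pair has no torsion sub-sheaves, but the second can occur and must be handled separately (when $s'\ne 0$ your formula for $W$ divides by $r'-r=0$). In that case $P_F$ and $P_E$ have the same degree-$n$ term, the equation $p^\delta_{(F,s')}=p^\delta_{(E,s)}$ becomes independent of $\delta$, and one checks directly that no wall arises. This is a routine case split rather than a genuine gap.
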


In \cite[Remark 6.3]{bradlow2003coherent} the authors pointed out that ``Once a coherent system (coherent pair, in particular) is removed, it can never return. 
In contrast to this, it can happen that a coherent system once added may have to be later removed." 
The first statement remains to be true for stable pairs, and for the second statement, we will give a criterion in Lemma \ref{criterion:sat} for ``when will a stable pair added be later removed" in some situations.

\subsection{Special walls for a rank 2 class}\label{rk 2 wall} 

In this subsection, we focus on a smooth n-dimensional projective variety $(X, \ox(1))$ and a fixed normalized class $v$ of rank $2$.
In section $2.3$, we introduced the collapsing wall $W_{\emptyset}$ for $v(k)$. 
Here, we will introduce two more walls between $\delta=0$ and $\delta=W_{\emptyset}$ that are related to the Gieseker moduli space and the Hilbert scheme.

If a rank $2$ pair $(E,s)$ of class $v(k)$ is $\delta$-semistable, then its Jordan-H\"{o}lder factors must be $(L, 1)$ and $(L', 0)$, in which $L$ and $L'$ are rank $1$ torsion-free sheaves. 
Equivalently, both $L$ and $L'$ are twisted ideal sheaves on $X$.
Therefore, a \textit{wall} for a rank $2$ class must be defined by the following short exact sequence of pairs
\begin{equation}{\label{rank 2 wall}}
0\lra (L, 1)\lra (E, s)\lra (L',0)\lra 0.    
\end{equation}

\subsubsection{The wall for saturated pairs}{\label{sat wall}} 
Let $(E, s)$ be a pair of class $v(k)$. 
Denote the cokernel of $s$ by $Q$, i.e. there is a short exact sequence  $0 \to \calo_{X} \overset{s}{\to} E \to Q \to 0$.
We start by introducing the following two definitions.

\begin{definition} \label{defn:sat}
    A pure coherent pair $(E,s)$ is called \textit{saturated} if the cokernel of the induced monomorphism $s:\ox\into E$ is torsion-free.
    We say that the pair $(E, s)$ corresponds to a curve $Y$ if $Q\cong \mathcal{I}_Y(c_1(E))$.
\end{definition}

\begin{definition}{\label{def:tor-free wall}}
    Define $W_T\in \mathbb{Q}[t]_{>0}$ to be the maximum critical value (possibly empty) for $v(k)$ that is less than $W_{\emptyset}$. 
\end{definition}

The critical value $W_T$ in Definition \ref{def:tor-free wall} is well-defined because there are finitely many walls in the region $\delta\in (0, W_{\emptyset})$ (Theorem \ref{Thm:finite}). 
We will see in the next two results that every stable pair in the chamber $(W_T, W_{\emptyset})$ is saturated, and this chamber is relatively big.

\begin{proposition} \label{prop:saturated}
For any numerical class $v(k)$ $(k>0)$, set $\delta:=W_{\emptyset}-\varepsilon$ where 
$W_{\emptyset}$ is the collapsing wall for $v(k)$ and $\varepsilon$ is a positive polynomial of degree at most $n-2$; 
let $(E,s)$ be a pair with $v(E)=v(k)$. 
If $(E,s)$ is pure and $\delta$-stable, then $(E,s)$ is saturated.
\end{proposition}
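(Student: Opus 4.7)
The strategy is a proof by contradiction using the single sub-pair arising from the saturation of $\ox$ inside $E$. Suppose $(E,s)$ is not saturated, so that $Q := \coker(s\colon \ox \to E)$ has a nonzero torsion subsheaf. Let $T \subset Q$ denote the maximal torsion subsheaf and let $\widetilde F \subset E$ be its preimage, so there is a short exact sequence $0 \to \ox \to \widetilde F \to T \to 0$ with $E/\widetilde F$ torsion-free of rank one. Then $(\widetilde F, s) \hookrightarrow (E, s)$ is a proper sub-pair with nonzero section, and the plan is to show that it destabilizes $(E,s)$, forcing $T = 0$.

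I would next run a direct reduced Hilbert polynomial computation. Using $\rk(\widetilde F) = 1$, $\rk(E) = 2$, $P_{\widetilde F} = P_{\ox} + P_T$, and the formula $W_{\emptyset} = P_E - 2P_{\ox}$ from Definition \ref{colwall}, together with $\delta = W_{\emptyset} - \varepsilon$, one finds
\[
p_{(\widetilde F, s)}^{\delta}(t) - p_{(E, s)}^{\delta}(t) \;=\; P_T(t) - \tfrac{1}{2}\varepsilon(t).
\]
The proof then splits into two cases. If $\supp T$ contains a codimension-one component, then $P_T$ has degree $n-1$ with positive leading coefficient (equal to $\tfrac{1}{(n-1)!}$ times the intersection of the divisorial part of $T$ with $H^{n-1}$, positive by ampleness of $H$). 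Since $\deg\varepsilon \leq n-2$, the leading term of $P_T - \varepsilon/2$ is that of $P_T$, hence is positive. Thus $p_{(\widetilde F, s)}^{\delta} > p_{(E, s)}^{\delta}$ in the lexicographic order, contradicting $\delta$-stability.

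The subtler case --- and what I expect to be the main obstacle --- is when $\supp T$ has codimension $\geq 2$, for then $\deg P_T \leq n-2$ can be dominated by $\varepsilon/2$ and the polynomial comparison alone is inconclusive. Here I would argue geometrically that this case cannot occur at all. Since $T$ has codimension $\geq 2$, one has $c_1(T) = 0$ in the sense of the paper's convention, and hence $c_1(\widetilde F) = 0$. The reflexive hull $L := \widetilde F^{\vee\vee}$ is then a line bundle on the smooth variety $X$ with $c_1(L) = 0$, and the composition $\ox \hookrightarrow \widetilde F \hookrightarrow L$ exhibits $L$ as effective. Writing $L = \ox(D)$ for some effective divisor $D$ satisfying $D \cdot H^{n-1} = 0$ forces $D = 0$ by the ampleness of $H$; hence $L = \ox$. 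But then $\widetilde F \subset L = \ox$, combined with $\ox \subset \widetilde F$, yields $\widetilde F = \ox$, contradicting $T \neq 0$ and completing the proof.
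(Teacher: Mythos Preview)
Your proof is correct and follows the same strategy as the paper's: form the saturation $\widetilde F$ of $\ox$ inside $E$ and show the sub-pair $(\widetilde F,s)$ destabilizes $(E,s)$ via the identity $p_{(\widetilde F,s)}^{\delta}-p_{(E,s)}^{\delta}=P_T-\tfrac{1}{2}\varepsilon$. Your treatment is in fact more complete than the paper's, which simply asserts $\deg(T)>0$ without argument; your reflexive-hull case ruling out $\codim\supp T\geq 2$ is exactly the missing justification, so the extra paragraph is a genuine gain in rigor rather than a detour.
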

\begin{proof}
If $(E,s)$ is not saturated, let $T$ be the maximal torsion sub-sheaf of \linebreak $Q:=\coker(s)$, and let $F$ be the kernel of the composed morphism $E\onto Q\onto Q/T$. This situation can be summarized in the following commutative diagram
\begin{equation} \label{diag:saturation}
\begin{split}
\xymatrix@-2ex{ 
&  &  & 0 \ar[d] \\
& 0\ar[d] &  & T \ar[d] &  \\
0\ar[r] & \ox \ar[r]^s \ar[d]_{s'} & E \ar[r] \ar@{=}[d] & Q \ar[r]\ar[d] & 0 \\
0\ar[r] & F \ar[r] & E \ar[r] & Q/T \ar[r]\ar[d] & 0 \\
&  &  & 0. }
\end{split}
\end{equation}
We then obtain the exact sequence $\ox\stackrel{s'}{\into} F \onto T$, so that $\rk(F)=1$ and
$$ \deg(F)=\deg(T)>0. $$ In addition, we obtain a sub-pair $(F,s')\into(E,s)$ with $s'\ne0$; note that
\begin{align*}
p_{(E,s)}^\delta(t) - p_{(F,s')}^\delta(t) & =    
p_E(t) - p_{F}(t) + \left(\dfrac{1}{\rk(E)}-1\right)\delta \\
& = p_{\ox}(t) - p_F(t) + \left(1-\dfrac{1}{\rk(E)} \right)\varepsilon \\
& = -\deg(F) \dfrac{t^{n-1}}{(n-1)!} + \text{lower order terms} < 0,
\end{align*} 
since $\deg(\varepsilon)\le n-2$. Therefore, $(E,s)$ is not $\delta$-stable.
\end{proof}

\begin{corollary}{\label{torfreechamber}}
    Any $\delta$-semistable pair $(E, s)$, for $\delta\in (W_T, W_{\emptyset})$, is saturated.
\end{corollary}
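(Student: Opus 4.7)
The plan is a two-step reduction to Proposition \ref{prop:saturated}. By the definition of $W_T$ as the largest critical value strictly below $W_{\emptyset}$, the open interval $(W_T,W_{\emptyset})\subset\Q[t]_{>0}$ contains no wall, so Theorem \ref{Thm:finite} guarantees that the collection of $\delta$-semistable pairs of class $v_k$ is independent of $\delta$ throughout this chamber. It therefore suffices to verify saturation at a single carefully chosen $\delta_0\in(W_T,W_{\emptyset})$.

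My choice will be $\delta_0:=W_{\emptyset}-\varepsilon$ where $\varepsilon\in\Q_{>0}$ is a sufficiently small positive \emph{constant}. Such a $\delta_0$ always lies in the chamber because $W_{\emptyset}-W_T$ is a positive polynomial (positive leading coefficient in the lex order), so subtracting a small enough positive constant either leaves the leading coefficient unchanged (if $W_{\emptyset}-W_T$ has degree $\ge 1$) or preserves positivity (if it is itself a constant). With $\deg(\varepsilon)=0\le n-2$, this $\delta_0$ fulfils exactly the hypothesis of Proposition \ref{prop:saturated}.

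The remaining step is to promote Proposition \ref{prop:saturated} from $\delta_0$-stability to $\delta_0$-semistability. This is immediate from inspection of its proof: if $(E,s)$ fails to be saturated, the sub-pair $(F,s')$ produced via diagram \eqref{diag:saturation} satisfies
\[
p_{(E,s)}^{\delta_0}(t)-p_{(F,s')}^{\delta_0}(t)<0
\]
with \emph{strict} inequality in the leading term, which rules out $\delta_0$-semistability, not merely $\delta_0$-stability. Combining the three observations yields the corollary. The main subtlety I anticipate is confirming that the chamber $(W_T,W_{\emptyset})$ does contain polynomials of the restricted form demanded by Proposition \ref{prop:saturated}; this is settled by the lex-order argument in the second paragraph, and is where the restriction to varieties of dimension $n\ge 2$ tacitly enters.
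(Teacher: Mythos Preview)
Your proof is correct and follows essentially the same approach as the paper: both arguments hinge on the destabilizing sub-pair $(F,s')$ from diagram~\eqref{diag:saturation} and on the absence of walls in $(W_T,W_{\emptyset})$. The paper phrases this by saying that $(F,s')$ defines a wall $W_F\le W_T$ and hence $(E,s)$ is unstable throughout $(W_T,W_{\emptyset})$, whereas you package the same content as a reduction via Theorem~\ref{Thm:finite} to a single $\delta_0$ where Proposition~\ref{prop:saturated} applies; your explicit remark that the strict inequality in that proposition's proof upgrades the conclusion from stable to semistable is a point the paper leaves implicit.
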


\begin{proof}
    If not, then diagram \ref{diag:saturation} in Proposition \ref{prop:saturated} implies that there is a sub-pair $(F, s')\hookrightarrow(E, s)$ which defines a wall at $\delta=W_{F}$. 
    By definition, $W_T\geq W_F$, and one checks that $(E,s)$ is unstable if $\delta>W_F$.
    Therefore, $(E,s)$ is $\delta$-unstable for $\delta\in (W_T, W_{\emptyset})$, which leads to a contradiction that $(E,s)$ is $\delta$-semistable.
\end{proof}

\begin{remark} 
We will compute a group of walls that appear before the collapsing wall in Section \ref{subsec:wall0dim}. There, we will see that $W_T$ is defined by the following sequence:
\[
0\lra (\cali_A(1), 1)\lra (E,s)\lra (\ox(c_1(v)-1), 0)\lra 0.
\]
One computes directly that 
$$W_T=\frac{\deg(E)+(k-1)\cdot \rk(E)}{(\rk(E)-1)(n-1)!}\cdot t^{n-1}+\text{lower order terms}.$$
Hence, $\displaystyle W_{\emptyset}-W_T=\frac{\rk(E)}{(\rk(E)-1)(n-1)!}\cdot t^{n-1}+\text{lower order terms}.$
In particular, the degree of $W_{\emptyset}-W_T$ is $n-1$, which proves Proposition \ref{prop:saturated}. 
Nevertheless, diagram (\ref{diag:saturation}) motivates the ``torsion-free" wall in Definition \ref{def:tor-free wall} and leads to Corollary \ref{torfreechamber}.
\end{remark}

\subsubsection{The first wall from the left}
We have seen in Proposition \ref{prop:stability} that when $\delta$ is sufficiently close to $0$, the underlying sheaf $E$ in a $\delta$-stable pair $(E,s)$ is stable. 

\begin{definition} \label{defn:vs}
For a fixed $\delta\in \mathbb{Q}[t]_{>0}$, a coherent pair $(E,s)$ is called \emph{very stable} if $(E,s)$ is $\delta$-stable and $E$ is Gieseker semistable. 
\end{definition}

Define $W_G$ to be the minimum critical value in $(0, W_{\emptyset})$. So $W_G$ is the first wall as $\delta$ varies from $0$ to $W_{\emptyset}$. 

For the case $c_1(v)=0$, $W_G$ can be sufficiently close to $0$, and this wall is not necessarily defined by a line bundle. 
For the case $c_1(v)=-1$, the first wall can be better controlled. 
We will provide some nice results below when $c_1(v)=-1$ and the underlying sheaf $E$ in a pair $(E,s)$ defining the wall $W_1$ is reflexive. 

\begin{proposition}{\label{Lem:1wallleft}}
    Let $v$ be a numerical class of rank $2$ with $c_1(v)=-1$. Then,
    \begin{enumerate}
        \item     
    For $k\in \mathbb{Z}_{>0}$, the minimum numerical wall $W_G$ for $v(k)$ is defined by the following sequence
    \begin{equation}{\label{1wallleft}}
    0\lra (\calo_{\p3}(k-1), 1) \lra (E(k), 1) \lra (\cali_Z(k), 0) \lra 0.
    \end{equation}
in which $v(E)=v$, and $Z\subset X$ is a codimension $2$ subvariety of $X$. 
\item
Moreover, if $E$ is reflexive, then the above sequence defines an (actual) wall for $v(k)$, if and only if $H^0(X, E(1))\neq 0$. 
\item 
In addition, as $\delta$ gets larger and crosses $W_G$, the following pairs (set theoretically) will become unstable.
\[
\{(E,s) ~|~ E\text{ is reflexive}, 0\neq s\in H^0(X, E(1)), H^0(X,E)=0 \}.
\]
\end{enumerate}
\end{proposition}

\begin{proof}
We know that a wall for $v(k)$ is defined by the following sequence (which is sequence (\ref{rank 2 wall}) in the present situation)
\[
0\lra (\cali_Z(l), 1)\lra (E(k),s)\lra (\cali_{Z'}(l'), 0)\lra 0,
\]
where $Z\in X$ and $Z'\subset X$ are sub-varieties of codimension at most $2$, and $l, l'\in \mathbb{Z}$.
The corresponding critical value $\delta$ is expressed as
\[
\delta=P_{E(k)}(t)-2P_{\cali_Z(l)}(t),
\]
which is a decreasing function of $P_{\cali_Z(l)}(t)$.
For $\delta$ to be positive, we must have $l\leq l'$.
So, with the fact that $c_1(E)=-1$, $\delta$ gets its minimum value when $l=k-1$ and $l'=k$, and the wall becomes 
\[
0\lra (\cali_Z(k-1), 1)\lra (E(k), 1)\lra (\cali_{Z'}(k), 0)\lra 0.
\]
Moreover, to get the minimum $\delta$, $P_{\cali_Z(k-1)}(t)$ should get its maximum. This is when $Z$ is the empty set, i.e. $\cali_Z(k-1)=\calo_X(k-1)$.
Therefore, the minimum numerical wall is defined by the sequence in display (\ref{1wallleft}), and this proves Statement (1).

For the second statement, if sequence (\ref{1wallleft}) defines a wall, then obviously, $H^0(X, E(1))\neq 0$. Conversely, if $H^0(X, E(1))\neq 0$, then let $Q$ be the cokernel of $s$, i.e. 
\[
0\lra \calo_X\lra E(1)\lra Q\lra 0.
\]
It is sufficient to prove that $Q$ is torsion-free. Then, the sequence 
\[
0\lra (\calo_X(k-1), 1)\lra (E(k),s)\lra (Q(k),0)\lra 0 
\]
defines a wall, and the above arguments imply that it is the minimum one.

Suppose $Q$ is not torsion-free. Let $T\hookrightarrow Q$ be its maximal torsion subsheaf. Consider the composition $E(1)\onto Q\onto Q/T$, and denote by $K$ the kernel of this composition. 
We have the following diagram:

\[
\begin{tikzcd}
&&&T\arrow[d]&\\
0\arrow[r]& \calo_{X}\arrow[r]\arrow[d]&E(1)\arrow[r]\arrow[d,"="]&Q\arrow[r]\arrow[d]&0\\
0\arrow[r]& K\arrow[r]\arrow[d]&E(1)\arrow[r]&Q/T\arrow[r]&0\\
&T&&&.\\
\end{tikzcd}
\]
Since $E$ is reflexive and $Q/T$ is torsion-free, $K$ is reflexive. Besides, $\rk(K)=1$, which implies that $K$ is a line bundle (\cite[Proposition 1.9]{hartshorne1980stable}).
Thus, $K\simeq\calo_X(s)$ for some $s>0$, and there is a morphism $K\simeq\calo_X(s)\lra E(1)$, which is a section of $H^0(X, E(1-s))$. Since $1-s<0$, we must have $H^0(X, E(1-s))=0$, which is absurd. 

Lastly, Statement (3) is straightforward by definition.
\end{proof}


\section{Moduli spaces of coherent pairs} \label{sec:moduli}

Let $X$ be a smooth projective variety over $\kappa$.
In \cite{lin2018moduli}, \cite{Wandel_2015}, \cite{he1996espaces}, the moduli space of stable pairs on $X$ was constructed using GIT, and the deformation theory was studied. In this section, we first recollect some of the general results and then focus on the case of rank $2$, showing a general wall-crossing picture that relates a Gieseker moduli space to a Hilbert scheme. 

\subsection{Moduli space of stable pairs}

In order to set up the moduli problem for coherent pairs, we need to introduce the appropriate notion of family and equivalence of families. 
For any numerical class $v$, let $P$ be the corresponding Hilbert polynomial for $v$. As a convention, we will use the Hilbert polynomial $P$ to define moduli functors rather than using the numerical class $v$.

A \textit{family of coherent pairs} on $X$ parametrized by a scheme $S$ of finite type of $\kappa$ is a pair $(\mathbf{E}, \mathbf{s})$ consisting of a coherent sheaf $\mathbf{E}$ on $X\times S$, and a morphism $\mathbf{s}:\mathcal{O}_{S}\to \pi_{S*}\mathbf{E}$ where $\pi_S:X\times S\to S$ is the projection onto the second factor. 
The morphism $\mathbf{s}$ is equivalent to $\pi_S^*\mathbf{s}: \mathcal{O}_{S\times X}\to \mathbf{E}$ by the adjoint property of $\pi_{S*}$ and $\pi_S^*$. 

A family $(\mathbf{E},\mathbf{s})$ over $S$ is \textit{flat} if $\mathbf{E}$ is flat over $S$. This is a special case of flat coherent systems introduced in \cite[Definition 1.8, Proposition 1.9]{he1996espaces}.

Two families $(\mathbf{E}, s)$ and $(\mathbf{F}, s')$ are equivalent if and only if there exist an isomorphism $\phi: \mathbf{E}\to \mathbf{F}$ and the commutative diagram holds
\[
\begin{tikzcd}
\calo_{X\times T}\arrow[r, "s"]\arrow[d, "\mathbf{1}"]&\mathbf{E}\arrow[d, "\phi"]\\
\calo_{X\times T}\arrow[r, "s'"]&\mathbf{F}.\\
\end{tikzcd}
\]

Consider the moduli functor $\mathfrak{S}^{\delta}_X(P):Sch\to Set$ that assigns to every scheme $R$ isomorphism classes of flat families of $\delta$-semistable pairs $(\mathbf{E},\mathbf{s})$ on $X\times_S R$ over $R$, such that the fiber $(\mathbf{E}_r, \mathbf{s}_r)$ at $r\in R$ is semi-stable with Hilbert polynomial $P_{\mathbf{E}_r}(t)=P$. 
In particular, $\mathbf{E}_r$ is torsion-free for all $r\in R$.
Define a sub-functor $\mathfrak{S}^{\delta,st}_X(P)$ concerning only stable pairs. The next theorem shows that there exists a (coarse) fine moduli space that (co-)represents those functors. 

\begin{theorem}[\cite{Wandel_2015} Theorem 3.8, \cite{lin2018moduli} Theorem 1.1, \cite{he1996espaces} Theorem 2.6]{\label{moduliofpairs}}
Let $(X, \calo_X(1))$ be a smooth polarized projective variety and $\delta\in \mathbb{Q}[t]_{> 0}$ a positive rational polynomial. Then, there exists a coarse moduli space $\cals^{\delta}_{X}(P)$ of $\delta$-semistable pairs co-representing the moduli functor $\mathfrak{S}^{\delta}_X(P)$. Two pairs correspond to the same point in $\cals^{\delta}_X(P)$ if and only if they are $S-$equivalent. 
Moreover, there is an open subset $\cals^{\delta,st}_X(P) \subset \cals^{\delta}_X(P)$ corresponding to stable pairs. This subset is a fine moduli space of $\delta$-stable pairs, i.e., it represents the functor $\mathfrak{S}^{\delta,st}_X(P)$.
\end{theorem}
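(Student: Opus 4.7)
The plan is to construct $\cals^{\delta}_X(P)$ as a GIT quotient, following Simpson's framework for the moduli of sheaves and adapting it to incorporate the section data in the style of Le Potier and He for coherent systems. The first stage is boundedness of the family of $\delta$-semistable pairs $(E,s)$ with $P_E = P$: applying the inequality $p_{(F,0)}^\delta(t) \le p_{(E,s)}^\delta(t)$ to arbitrary pure sub-pairs $(F,0) \hookrightarrow (E,s)$ yields a uniform upper bound on the Mumford slope of every subsheaf $F \subset E$, and Grothendieck's lemma then gives boundedness. In particular, there exists $m_0$ such that for all $m \ge m_0$, any $\delta$-semistable pair $(E,s)$ in the family satisfies $H^i(E(m))=0$ for $i>0$, $h^0(E(m))=P(m)$, and $E(m)$ is globally generated.

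Next, I would fix $V = \kappa^{P(m)}$ for $m \gg 0$ and form the Quot scheme $Q := \Quot(V \otimes \ox(-m), P)$ with its universal quotient $V \otimes \ox(-m) \to \cale$ on $Q \times X$. On the open locus $Q^\circ \subset Q$ where $\cale|_{\{q\} \times X}$ is pure with Hilbert polynomial $P$ and $V \to H^0(\cale(m)|_{\{q\} \times X})$ is an isomorphism, one builds a parameter scheme $R \to Q^\circ$ whose fibres parametrize global sections $s \in H^0(E)$ up to scalar; concretely, $R$ is cut out inside a projective bundle constructed from a pushforward of $\cale$, after further shrinking to guarantee that this pushforward is locally free. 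The group $\operatorname{GL}(V)$ acts on $R$ through its action on $V$, and two closed points of $R$ lie in the same orbit precisely when the corresponding pairs $(E,s)$ are isomorphic as pairs.

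The key step is to choose a $\operatorname{GL}(V)$-linearization on $R$ so that GIT (semi)stability reproduces $\delta$-(semi)stability of pairs. Following the constructions of Wandel and of Lin, one takes a linearization built from determinant line bundles $\det(\pi_{R*}\cale(\ell))$ on $R$ for an auxiliary twist $\ell$, combined with a contribution from the section bundle weighted by $\delta$. Testing the Hilbert--Mumford numerical criterion against one-parameter subgroups of $\operatorname{GL}(V)$ adapted to filtrations $0 \subset V' \subset V$ --- equivalently, to sub-pairs $(F,s') \hookrightarrow (E,s)$ --- the resulting weight inequality, once $\ell$ is allowed to grow, matches the lexicographic comparison of $p^\delta_{(F,s')}(t)$ and $p^\delta_{(E,s)}(t)$ coefficient by coefficient. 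One then forms $\cals^\delta_X(P) := R^{ss}/\!\!/\operatorname{GL}(V)$: by standard GIT this is a projective scheme co-representing $\mathfrak{S}^\delta_X(P)$, closed orbits in $R^{ss}$ correspond to polystable pairs (giving the $S$-equivalence description), and on the stable locus the stabilizers reduce to scalars, so the universal quotient descends after an appropriate twist, yielding a fine moduli space $\cals^{\delta,st}_X(P)$ representing $\mathfrak{S}^{\delta,st}_X(P)$.

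I expect the main obstacle to be precisely this linearization step. Because $\delta$ is a polynomial in $t$ and pair stability is governed by the lexicographic order, no single linearization can detect $\delta$-stability on the nose; one must work asymptotically in $\ell$, and possibly enlarge $m$ within the bounded family, so that successive Hilbert--Mumford weights recover successive coefficients of $p^\delta_{(F,s')}(t) - p^\delta_{(E,s)}(t)$. Verifying that the two notions of stability agree exactly --- not merely up to lower-order corrections --- and that a single $m$ works uniformly across the entire bounded family is the technically delicate heart of the argument, and it is the content of the cited papers \cite{Wandel_2015,lin2018moduli,he1996espaces}.
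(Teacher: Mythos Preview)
The paper does not supply its own proof of this theorem: it is stated with attribution to \cite{Wandel_2015}, \cite{lin2018moduli}, and \cite{he1996espaces} and then used as input for the rest of the article. Your sketch is a faithful outline of the GIT construction carried out in those references --- boundedness, a Quot-scheme parameter space enriched by the section datum, a linearization weighted by $\delta$, and identification of GIT-(semi)stability with $\delta$-(semi)stability via Hilbert--Mumford --- and you correctly flag the delicate point, namely making a polynomial $\delta$ and the lexicographic order interact with a single choice of $m$ and linearization.
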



\subsubsection{Deformation theory}

In \cite{lin2018moduli} and \cite{he1996espaces}, the tangent and obstruction theories for the moduli space of semistable pairs are studied in different contexts. In this subsection, we briefly recall their results. 
Then, we give a simple proof that for a pair $(E,s)$ with $H^1(X, E)=0$, the tangent spaces described in \cite{lin2018moduli} and \cite{he1996espaces} coincide. 

Let $\mathcal{A}rt_\kappa$ be the category of local Artinian $\kappa$-algebras with residue field $\kappa$. 
Let $A, B\in \mathcal{A}rt_\kappa$ and 
\begin{equation}{\label{smallextension}}
0\lra K \lra B \overset{\sigma}{\lra} A \lra 0
\end{equation}
be a small extension, i.e., $m_B K=0$. For a semistable pair $\Lambda=(E, s)$ on $X$, let $I^{\bullet}:=[\mathcal{O}_X\overset{s}{\to} E]$ be the corresponding complex concentrated in degree $0$ and $1$.  

\begin{theorem}[\cite{lin2018moduli} Theorem 1.2]{\label{tanobs1}}
Suppose $s_A: \mathcal{O}_X\otimes_\kappa A\to E_A$ is a morphism over $X_A:=X\times_{Spec(\kappa)}Spec(A)$ extending $s$, where $E_A$ is a coherent sheaf flat over $A$. There is a class 
$$ {\rm ob}(s_A, \sigma)\in \Ext^1(I^{\bullet}, E\otimes K)$$
such that there exists a flat extension of $s_A$ over $X_B$ if and only if ${\rm ob}(s_A, \sigma)=0$. If extensions exist, the space of extensions is a torsor under 
$$\Hom(I^{\bullet}, E\otimes K).$$
\end{theorem}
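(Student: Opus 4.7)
\emph{Plan.} The approach is to view the pair $(E,s)$ as the two-term complex $I^{\bullet}=[\calo_{X}\overset{s}{\to} E]$, run the standard deformation machinery for complexes, but constrain the degree-zero term to remain the structure sheaf of the base-change in every lift. This rigidity of the degree-zero entry is precisely what forces the coefficient sheaf in both Ext groups to be $E\otimes K$ rather than $I^{\bullet}\otimes K$, and it is made transparent by the distinguished triangle $E[-1]\to I^{\bullet}\to \calo_{X}\overset{s}{\to}E$.

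First I would reduce to the two classical sub-problems. By flatness of $E_{A}$ over $A$ and the small extension (\ref{smallextension}), any flat lift $E_{B}$ of $E_{A}$ over $X_{B}$ fits into a short exact sequence $0\to E\otimes K \to E_{B} \to E_{A} \to 0$. Standard sheaf deformation theory says the existence of $E_{B}$ is obstructed by a class in $\Ext^{2}(E,E\otimes K)$, and, given such an $E_{B}$, the obstruction to extending $s_{A}$ to a morphism $s_{B}\colon \calo_{X_{B}}\to E_{B}$ is the image of $s_{A}$ under a connecting map valued in $H^{1}(E\otimes K)$. Applying $\RHom(-,E\otimes K)$ to the distinguished triangle above yields the long exact sequence
\[
\cdots \to H^{1}(E\otimes K) \to \Ext^{1}(I^{\bullet},E\otimes K) \to \Ext^{2}(E,E\otimes K) \to H^{2}(E\otimes K) \to \cdots,
\]
so that $\Ext^{1}(I^{\bullet},E\otimes K)$ is exactly the group into which both obstructions naturally map; their combined image is the desired class $ob(s_{A},\sigma)$.

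Concretely, I would construct $ob(s_{A},\sigma)$ via a \v{C}ech cocycle on an affine cover $\{U_{i}\}$: lift $(E_{A},s_{A})|_{U_{i}}$ to a pair $(\widetilde E_{i},\widetilde s_{i})$ flat over $B$ (possible by vanishing of higher Ext on affines), compare restrictions on $U_{i}\cap U_{j}$ by morphisms of pairs $\phi_{ij}$ that reduce to the identity modulo $K$, and record the failure of the cocycle identity $\phi_{ij}\phi_{jk}\phi_{ki}=\mathrm{id}$. Because the $\calo_{X_{B}}$-component of each $\phi_{ij}$ is forced to be the identity, the resulting cocycle takes values in $\inhom(I^{\bullet},E\otimes K)$, not $\inhom(I^{\bullet},I^{\bullet}\otimes K)$; its hypercohomology class is $ob(s_{A},\sigma)\in\Ext^{1}(I^{\bullet},E\otimes K)$. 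Vanishing of this class is exactly the statement that the $\phi_{ij}$ can be adjusted to glue into a global flat lift of $(E_{A},s_{A})$, and the torsor structure on the set of such lifts under $\Hom(I^{\bullet},E\otimes K)$ follows by comparing two lifts modulo $K$. The main technical obstacle I anticipate is the bookkeeping required to identify the cocycle as an element of $\Ext^{1}(I^{\bullet},E\otimes K)$ rather than of the larger group $\Ext^{1}(I^{\bullet},I^{\bullet}\otimes K)$; the distinguished triangle above and its induced long exact sequence give the cleanest way to do this.
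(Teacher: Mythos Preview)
The paper does not give its own proof of this theorem: it is quoted verbatim from \cite{lin2018moduli} as background (Theorem~1.2 there), alongside the parallel result of \cite{he1996espaces}, and no argument is supplied. So there is nothing in the paper to compare your proposal against.

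That said, your sketch is a reasonable outline of the standard argument and matches the spirit of \cite{lin2018moduli}. The key structural point you identify---that the degree-zero term $\calo_{X_B}$ is rigid, which forces the coefficient object to be $E\otimes K$ rather than $I^{\bullet}\otimes K$---is exactly what distinguishes this deformation problem from the unconstrained deformation of complexes, and your use of the triangle $E[-1]\to I^{\bullet}\to\calo_X\to E$ to organize the two sub-obstructions (lifting $E_A$ and lifting $s_A$) is the correct bookkeeping device. If you want a complete proof you should consult \cite{lin2018moduli} directly; the \v{C}ech argument you describe is essentially what is carried out there.
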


In \cite{he1996espaces}, the tangent and obstruction are described by extension classes of coherent pairs (coherent systems).

\begin{theorem}[\cite{he1996espaces} Definition 3.9 and Theorem 3.12]{\label{tanobs2}}
Let $\Lambda=(E,s)$ be an  $A-$flat coherent system on $X\times Spec(A)/Spec(A)$. The obstruction of extending $\Lambda$ to $Spec(B)$ is contained in the cohomology class $\Ext_A^2(\Lambda, \Lambda\otimes K)$. 

Let $\Lambda_0=(E_0, s_0)$ be a $\delta$-stable pair for some non-negative rational polynomial $\delta$, and $a$ is the corresponding point in the moduli space $\cals^{\delta}_{X}(P)$.  The tangent space at $a$ is isomorphic to $\Ext^1(\Lambda_0,\Lambda_0)$, and if $\Ext^2(\Lambda_0,\Lambda_0)=0$, then the moduli space $\cals^{\delta}_{X}(P)$ is smooth at $a$.
\end{theorem}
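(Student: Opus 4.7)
The plan is to bootstrap the deformation theory of coherent pairs from the classical deformation theory of coherent sheaves, using the long exact sequence of Ext groups for pairs recorded in Lemma \ref{LES} to package the two pieces of data (the sheaf and the section) into a single cohomological invariant. A deformation of $\Lambda_0 = (E_0, s_0)$ to $X_B$ consists of an $A$-flat deformation $E_A$ of $E_0$ together with a lift of the section, and each of these carries its own obstruction and tangent theory.

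First, I would recall that given the small extension $0 \to K \to B \to A \to 0$ and an $A$-flat deformation $E_A$, classical Illusie theory supplies an obstruction class in $\Ext^2(E_0, E_0 \otimes K)$ to lifting $E_A$ to $X_B$, and the set of lifts, when non-empty, is a torsor under $\Ext^1(E_0, E_0 \otimes K)$. Once a sheaf lift $E_B$ has been chosen, the short exact sequence $0 \to E_0 \otimes K \to E_B \to E_A \to 0$ shows that the obstruction to lifting $s_A$ to a section $s_B : \mathcal{O}_{X_B} \to E_B$ lies in $H^1(X, E_0 \otimes K) = \Hom(\mathbb{C}\cdot s_0, H^1(E_0 \otimes K))$, while two lifts differ by an element of $\Hom(\mathbb{C}\cdot s_0, H^0(E_0 \otimes K))$. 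Assembling these two layers along the long exact sequence of Lemma \ref{LES} produces the desired class in $\Ext^2(\Lambda_0, \Lambda_0 \otimes K)$ whose vanishing is equivalent to the joint existence of a compatible pair $(E_B, s_B)$; a five-lemma chase in the same long exact sequence then identifies the set of lifts, when non-empty, as a torsor under $\Ext^1(\Lambda_0, \Lambda_0 \otimes K)$.

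For the tangent space, specialize to $A = k$, $B = k[\epsilon]/(\epsilon^2)$, $K = k\cdot \epsilon$: flat extensions of $\Lambda_0$ over $\operatorname{Spec}(B)$ up to equivalence are then parametrized by $\Ext^1(\Lambda_0, \Lambda_0)$, and by the functorial description of $\cals^{\delta}_X(P)$ in Theorem \ref{moduliofpairs} this agrees with the Zariski tangent space at the point $a$ associated to the $\delta$-stable pair $\Lambda_0$. Smoothness when $\Ext^2(\Lambda_0, \Lambda_0) = 0$ then follows from the standard argument that every small extension admits an unobstructed lift, so the local ring at $a$ is formally smooth.

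The main obstacle will be producing the obstruction as a single element of $\Ext^2(\Lambda_0, \Lambda_0 \otimes K)$, rather than as a compatibility datum between the sheaf obstruction and the section obstruction. The cleanest route is to realize $\Ext^i(\Lambda_0, -)$ as the derived functor of $\Hom(\Lambda_0, -)$ using He's construction of enough injectives in the abelian category of coherent systems, and then redo the Illusie argument in this setting. A comparison with the two-term-complex model $I^\bullet = [\mathcal{O}_X \xrightarrow{s_0} E_0]$ used in Theorem \ref{tanobs1} --- precisely the comparison referred to in the remark preceding the theorem --- then identifies the two obstruction classes and closes the loop.
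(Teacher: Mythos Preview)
The paper does not supply its own proof of this theorem: it is quoted directly from \cite{he1996espaces} (Definition~3.9 and Theorem~3.12) and used as a black box, so there is no argument in the present paper to compare your proposal against. Your outline --- reducing to the classical sheaf deformation theory plus the obstruction to lifting the section, and then repackaging both via the long exact sequence of Lemma~\ref{LES} and He's injective resolutions --- is a reasonable sketch of how such a result is established, and is indeed in the spirit of He's original argument. If you intend to include a full proof rather than a citation, the step that will require the most care is the one you flag yourself: showing that the pair of obstructions (for the sheaf and for the section) assemble into a \emph{single} well-defined class in $\Ext^2(\Lambda_0,\Lambda_0\otimes K)$, independent of the auxiliary choices made along the way; this is where working directly in the abelian category of coherent systems, with its own injective objects, pays off over an ad hoc two-step approach.
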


We check directly in the next lemma that for a stable pair $\Lambda=(E, s)$, in which $\rk(E)=2$ and $H^1(X, E)=0$, Theorem \ref{tanobs1} and Theorem \ref{tanobs2} compute the same tangent space at $\Lambda$.

\begin{lemma}
Let $\Lambda=(E,s)$ be a coherent pair of class $v$ with $ch_0(v)=2$. Assume that $(E,s)$ is $\delta$-stable for some $\delta\in \mathbb{Q}[t]_{>0}$, and let $I^{\bullet}=[\mathcal{O}_X \to E]$ be the corresponding complex concentrated in degree $0$ and $1$. If $H^1(X, E)=0$, then $\Hom(I^{\bullet}, E)\simeq \Ext^1(\Lambda, \Lambda)$. 
\end{lemma}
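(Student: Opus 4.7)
The plan is to derive two parallel short exact sequences, one for each side of the claimed isomorphism, and then match them up. The key inputs are the distinguished triangle associated to the two-term complex $I^{\bullet}$, and the long exact sequence of Lemma~\ref{LES}, with the vanishing $H^1(X,E)=0$ being what makes both sequences truncate at the same place.

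First I would compute $\Hom(I^{\bullet},E)$. Viewing $I^{\bullet}=[\calo_X\xrightarrow{s}E]$ as the cone on $s$ yields the distinguished triangle
\[
E[-1]\longrightarrow I^{\bullet}\longrightarrow \calo_X\xrightarrow{\,s\,} E.
\]
Applying $\RHom(-,E)$ and passing to cohomology gives a long exact sequence whose relevant segment is
\[
\Hom(E,E)\xrightarrow{\alpha} H^0(E)\longrightarrow \Hom(I^{\bullet},E)\longrightarrow \Ext^1(E,E)\longrightarrow H^1(E),
\]
with $\alpha(f)=f\circ s$. The hypothesis $H^1(X,E)=0$ kills the last term, leaving the short exact sequence
\[
0\longrightarrow H^0(E)/\im(\alpha)\longrightarrow \Hom(I^{\bullet},E)\longrightarrow \Ext^1(E,E)\longrightarrow 0.
\]

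Next I would apply Lemma~\ref{LES} with $\Lambda'=\Lambda$. The same vanishing $H^1(E)=0$ truncates that long exact sequence to
\[
\Hom(E,E)\xrightarrow{\beta} \Hom(\mathbb{C}\cdot s, H^0(E)/\mathbb{C}\cdot s)\longrightarrow \Ext^1(\Lambda,\Lambda)\longrightarrow \Ext^1(E,E)\longrightarrow 0,
\]
where $\beta$ sends $f$ to the class of $f\circ s$ modulo $\mathbb{C}\cdot s$. Identifying $\Hom(\mathbb{C}\cdot s, H^0(E)/\mathbb{C}\cdot s)$ with $H^0(E)/\mathbb{C}\cdot s$ gives
\[
0\longrightarrow (H^0(E)/\mathbb{C}\cdot s)/\im(\beta)\longrightarrow \Ext^1(\Lambda,\Lambda)\longrightarrow \Ext^1(E,E)\longrightarrow 0.
\]

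To match the kernel terms of the two sequences, observe that $\alpha(\mathrm{id}_E)=s$, so $\mathbb{C}\cdot s\subset \im(\alpha)$, and therefore $\im(\beta)=\im(\alpha)/\mathbb{C}\cdot s$. This forces
\[
(H^0(E)/\mathbb{C}\cdot s)/\im(\beta)\;=\;H^0(E)/\im(\alpha).
\]
Both $\Hom(I^{\bullet},E)$ and $\Ext^1(\Lambda,\Lambda)$ now appear as the middle term of a short exact sequence with the same outer vector spaces, so they have the same dimension and are abstractly isomorphic as $\kappa$-vector spaces, which is the asserted conclusion.

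The main obstacle is less about difficulty than about bookkeeping: one must correctly identify the connecting homomorphism in the triangle as precomposition with $s$, and correctly identify the map $\beta$ of Lemma~\ref{LES} as the reduction of $\alpha$ modulo $\mathbb{C}\cdot s$, so that the two kernel computations line up. The rank $2$ hypothesis (together with $\delta$-stability) enters only to guarantee that $(E,s)$ is pure and $E$ is torsion-free so that $s\colon\calo_X\into E$ really is an injection and the complex $I^{\bullet}$ behaves as described; all the homological content is insensitive to the rank beyond that point.
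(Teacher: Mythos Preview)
Your argument is correct, and it is genuinely different from—and cleaner than—the paper's. Both proofs feed the same two long exact sequences (one from the triangle $\calo_X\xrightarrow{s}E\to I^{\bullet}[1]$, the other from Lemma~\ref{LES}) into a comparison, but they extract the conclusion differently. The paper identifies $I^{\bullet}\cong Q[-1]$, writes out both sides as alternating dimension sums, plugs in $\hom(\Lambda,\Lambda)=1$ from stability, and is then forced to prove $\Hom(Q,E)=0$; this vanishing is where most of the work goes, via a case analysis on whether $Q$ is torsion-free and an appeal to $\delta$-stability of $(E,s)$. You instead match the two short exact sequences at the level of cokernels: since $\beta=\pi\circ\alpha$ and $s=\alpha(\mathrm{id}_E)\in\im(\alpha)$, the third isomorphism theorem gives $\coker(\beta)\cong\coker(\alpha)$ directly. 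This bypasses the $\Hom(Q,E)=0$ argument entirely—in effect you are proving the identity $\hom(\Lambda,\Lambda)=\hom(Q,E)+1$ abstractly rather than computing each side separately. As a bonus, your route shows that neither the rank $2$ hypothesis nor $\delta$-stability is actually needed for the conclusion: only $s\neq 0$ and $H^1(X,E)=0$ are used. (Your closing remark that stability is needed to make $s$ injective is harmless but unnecessary, since the distinguished triangle you wrote down holds for any $s$.)
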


\begin{proof}
Let $Q$ be the cokernel of $s$, i.e. 
$$0\lra \mathcal{O}_X \overset{s}{\lra} E \lra Q \lra 0.$$ 
Then
$I^{\bullet}=[\mathcal{O}_X \overset{s}{\to} E]\simeq Q[-1]$, and $\Hom(I^{\bullet}, E)=\Hom(Q[-1], E)=\Ext^1(Q,E)$. 
Apply the functor $\Hom(-, E)$ to the sequence above, and we have the following long exact sequence
\[
\begin{tikzcd}
0 \arrow[r] & \Hom(Q,E)\arrow[r]& \Hom(E,E) \arrow[r] & \Hom(\mathcal{O}_X, E)&\\
\arrow[r]&\Ext^1(Q, E) \arrow[r]& \Ext^1(E, E)\arrow[r] & 0=\Ext^1(\mathcal{O}_X, E).\\
\end{tikzcd}
\]
It implies that $\text{hom}(I^{\bullet}, E)=\ext^1(E,E)+\text{hom}(\mathcal{O}_X,E)+\hom(Q,E)-\text{hom}(E,E)$.

On the other hand, considering the long exact sequence of coherent systems in Lemma \ref{LES}, we have
\[
\begin{tikzcd}
0 \arrow[r] & \Hom(\Lambda,\Lambda)\arrow[r]& \Hom(E,E) \arrow[r] & \Hom(s\cdot\mathbb{C}, H^0(X, E)/s\cdot \mathbb{C})\\
\arrow[r]&\Ext^1(\Lambda, \Lambda) \arrow[r]& \Ext^1(E, E)\arrow[r] & 0=\Hom(s\cdot\mathbb{C}, H^1(X, E)).\\
\end{tikzcd}
\]
This implies that
$$ \ext^1(\Lambda, \Lambda)=\ext^1(E,E)+\text{hom}(s\cdot\mathbb{C}, H^0(X, E)/s\cdot \mathbb{C})+\text{hom}(\Lambda, \Lambda)-\text{hom}(E,E). $$ 
Stability of $\Lambda$ gives $\text{hom}(\Lambda, \Lambda)=1$. 
Compare the expressions for  $\ext^1(\Lambda, \Lambda)$ and $\text{hom}(I^{\bullet}, E)$, and the claim would follow if $\Hom(Q,E)=0$. 

Suppose $\Hom(Q,E)\neq 0$. Consider the short exact sequence of pairs
$$0\lra (\mathcal{O}_X, \mathbf{1}_{\ox}) \lra (E, 1)\lra (Q, 0) \lra 0.$$

If $Q$ is torsion-free, then both $(\mathcal{O}_X, \mathbf{1}_{\ox})$ and $(Q,0)$ are stable for every stability parameter $\delta\in \mathbb{Q}[t]_{> 0}$. A non-zero morphism $Q\to E$ induces a non-zero morphism of pairs $\phi: (Q, 0) \to (E, 1)$. Consider the following diagram:

\[
\begin{tikzcd}
&&(Q, 0)\arrow[d, "\phi"]\arrow[dl, "\phi_1", swap]\arrow[dr, "\phi_2"]&&\\
0 \arrow[r] & (\mathcal{O}_X, \mathbf{1}_{\ox})\arrow[r]& (E, 1) \arrow[r, "p"] & (Q, 0) \arrow[r] &0.\\
\end{tikzcd}
\]
Since all the pairs in the diagram are stable, we must have
$$ p^{\delta}_{(\mathcal{O}_X, \mathbf{1})}(t)<p^{\delta}_{(E, 1)}(t) <p^{\delta}_{(Q,0)}(t). $$ 
Also, $\phi_1=0$ because $p^{\delta}_{(Q,0)}(t)>p^{\delta}_{(\mathcal{O}_X, \mathbf{1})}(t)$. 
Therefore, $\phi_2\neq 0$, which leads to $\phi_2\simeq\mathbb{C}\cdot id_{(Q,0)}$ since $(Q, 0)$ is $\delta$-stable. 
The above sequence of pairs is then split, and it contradicts the assumption that $(E, 1)$ is $\delta$-stable.

If $Q$ is not torsion-free. Let $T\hookrightarrow Q$ be the maximum torsion sub-sheaf, so $L:=Q/T$ is torsion-free. Since $\Hom((T, 0), (E, 1))=0$, a non-zero morphism $(Q,0) \to (E, 1)$ induces a non-zero morphism $(L, 0) \to (E,1)$. Let $(K, 1)$ be the kernel of the composition $(E, 1) \twoheadrightarrow (Q,0)\twoheadrightarrow (L, 0)$, and consider the following diagram   

\[
\begin{tikzcd}
&&(L, 0)\arrow[d, "\phi"]\arrow[dl, "\phi_1", swap]\arrow[dr, "\phi_2"]&&\\
0 \arrow[r] & (K, 1)\arrow[r]& (E, 1) \arrow[r, "p"] & (L, 0) \arrow[r] &0.\\
\end{tikzcd}
\]
For the same reason, $\phi_1$ has to be zero, which indicates that the sequence is split. Then, we get a contradiction that $(E,1)$ cannot be $\delta$-stable. 
Therefore, $\Hom(Q, E)=0$, and the proof is complete.
\end{proof}

\subsection{Wall-crossings for a rank 2 class}

When the rational polynomial $\delta\in \mathbb{Q}[t]_{> 0}$ varies with respect to the lexicographic order, there will possibly be some walls.
We call the region between two adjacent walls a \emph{chamber}. 
Such wall-crossing behaviors for coherent pairs were studied in \cite{Thaddeus1992StablePL} on a smooth curve and in \cite{he1996espaces} on $\mathbb{P}^2$. 

For the rest of this work, we will focus on a smooth projective threefold $X$ with a fixed polarization $\mathcal{O}_X(1)$. 
For a numerical class $v$, let $P$ be the corresponding Hilbert polynomial. 
Again, we will use $P$, rather than $v$, for moduli functors as a convention. 
Let $\mathfrak{G}_X(P)$ and $\mathfrak{H}_X(P)$ be the functors for the Gieseker moduli space and the Hilbert scheme for a fixed Hilbert polynomial $P$. To be precise:

\begin{itemize}
\item $\mathfrak{G}_X(P)$ denotes the functor $Sch\to Set$ that assigns every scheme $R$ to a torsion-free sheaf $\mathbf{E}$ on $X\times R$, such that the restriction $\mathbf{E}|_r$ to each fiber is Gieseker semistable with Hilbert polynomial $P$. Denote by $\calg_X(P)$ the coarse moduli space co-representing $\mathfrak{G}_X(P)$.
\item $\mathfrak{H}_X(P)$ denotes the Hilbert scheme functor $Sch\to Set$ that assign every scheme $R$ to the quotient $\calo_{X\times R}\onto \calo_Z$ with the Hilbert polynomial at each fiber equal to $P$. Denote by $\calh_X(P)$ the corresponding Hilbert scheme.
\end{itemize}

\begin{proposition}{\label{Mod:Gieseker}}
Fix a polynomial $P$ of degree $n$ and rank $r$. For every constant $0\le\delta<1/r$, there is a forgetful morphism $\Psi:\cals_X^\delta(P)\to \calg_X(P)$ given by \linebreak $\Psi(E,s)=E$. In addition, if $E$ is stable, then $\Psi^{-1}(E)=\mathbb{P}H^0(E)$.
\end{proposition}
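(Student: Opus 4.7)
The plan has three parts: establish the fiberwise stability implication, lift it to a scheme morphism via universal properties, then compute the fiber directly using the automorphisms of a stable sheaf.

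First I would check that for $\delta<1/r$, any $\delta$-semistable pair $(E,s)$ has $E$ Gieseker semistable. The proof of Proposition \ref{prop:stability} adapts essentially verbatim from the stable to the semistable case: if $F\hookrightarrow E$ destabilized $E$, then $(F,0)\hookrightarrow(E,s)$ would be a sub-pair, and when $\deg(p_E-p_F)=0$ the integrality bound in \eqref{eq:diff-p} together with the hypothesis $\delta<1/r<1/(r-1)$ still forces $p^\delta_{(F,0)}(t) > p^\delta_{(E,s)}(t)$ lexicographically, contradicting $\delta$-semistability. (The positive-degree case is even easier, as the $\delta$ term is of strictly lower order.)

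Next I would promote this fiberwise implication to a morphism of schemes via Theorem \ref{moduliofpairs}. Given any scheme $R$ and any family $(\mathbf{E},\mathbf{s})\in\mathfrak{S}^\delta_X(P)(R)$, the previous step shows that $\mathbf{E}$ is a flat family of Gieseker semistable sheaves with Hilbert polynomial $P$, hence defines an element of $\mathfrak{G}_X(P)(R)$. The assignment $(\mathbf{E},\mathbf{s})\mapsto\mathbf{E}$ is manifestly functorial in $R$ and compatible with equivalence of families, since an isomorphism of pairs is in particular an isomorphism of the underlying sheaves. Composing the resulting natural transformation $\mathfrak{S}^\delta_X(P)\to\mathfrak{G}_X(P)$ with the structure map $\mathfrak{G}_X(P)\to\operatorname{Hom}(-,\calg_X(P))$ from Gieseker co-representability, and invoking the co-representability of $\cals^\delta_X(P)$, yields the desired morphism $\Psi:\cals^\delta_X(P)\to\calg_X(P)$, which on closed points is $[(E,s)]\mapsto[E]$.

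Finally, for the fiber over a stable sheaf $E$: Proposition \ref{prop:converse} guarantees that every nonzero $s\in H^0(E)$ gives a $\delta$-stable pair, so $\Psi^{-1}(E)$ is contained in the fine moduli locus $\cals^{\delta,st}_X(P)$. Two such pairs $(E,s),(E,s')$ are isomorphic iff there exist $\phi\in\operatorname{Aut}(E)$ and $\lambda\in\kappa^*$ with $\phi\circ s'=\lambda\cdot s$; stability of $E$ forces $\operatorname{Aut}(E)=\kappa^*\cdot \operatorname{id}_E$, so the equivalence reduces to $s'\in\kappa^*\cdot s$, giving a set-theoretic bijection $\Psi^{-1}(E)\cong\mathbb{P}H^0(E)$. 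I expect the main (mild) obstacle to be upgrading this bijection to a scheme isomorphism; this follows from the fineness of $\cals^{\delta,st}_X(P)$, since the universal pair restricted over $\Psi^{-1}(E)$ classifies precisely the projective space of nonzero sections of the single sheaf $E$, which is $\mathbb{P}H^0(E)$ as a scheme.
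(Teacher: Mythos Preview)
Your proposal is correct and follows essentially the same approach as the paper: invoke Proposition~\ref{prop:stability} to get the forgetful natural transformation $\mathfrak{S}^\delta_X(P)\to\mathfrak{G}_X(P)$, compose with the co-representability map to $h_{\calg_X(P)}$, and use the universal property of $\cals^\delta_X(P)$ to obtain $\Psi$; then cite Proposition~\ref{prop:converse} for the fiber. You are in fact more careful than the paper in two places: you explicitly verify the semistable (not just stable) version of Proposition~\ref{prop:stability} needed to define the transformation on all of $\mathfrak{S}^\delta_X(P)$, and you spell out the fiber identification via $\Aut(E)=\kappa^*$ and fineness of the stable locus rather than simply asserting it.
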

\begin{proof}
When $\delta<1/r$, Proposition \ref{prop:stability} allows for the definition of a forgetful natural transformation between the moduli functors
$$ \Pi ~:~ \mathfrak{S}_X^{\delta}(P) \lra \mathfrak{G}_X(P) $$
given by
$$ \Pi(T)(\mathbf{E},\mathbf{s}) = \mathbf{E} $$
sending a flat family of $\delta$-semistable pairs to a flat family of semistable sheaves on $X$ parametrized by the same scheme $T$.
Consider the transformation of moduli functor $\mathfrak{G}_X(P)\to h_{\cals}$ and the composition $\mathfrak{S}^{\delta}_X(P)\to \mathfrak{G}_X(P)\to h_{\calg}$, where $\cals$ and $\calg$ are the coarse moduli spaces of semistable pairs and sheaves, and $h_{\cals}$ (resp. $h_{\calg}$) is the functor $\Hom(-, \cals)$ (resp. $\Hom(-, \calg)$). 
It follows from the universal property that there is an induced morphism of schemes $\cals\to \calg$. 
Lastly, Proposition \ref{prop:converse} implies that the fiber of this morphism is $\mathbb{P}H^0(E)$.
\end{proof}

\begin{remark}
In general, it is not clear whether $\Psi:\cals_X^\delta(P)\to \calg_X(P)$ is surjective; however, Proposition \ref{prop:converse} implies that the image of $\Psi$ contains $\calg_X^{\rm st}(P)$ (the stable locus in $\calg_X(P)$).
\end{remark}

\begin{remark}
If $E\in \calg_X(P)$ is strictly semistable, the fiber of $\Psi$ over $E$ is more intricate than $H^0(E)$. 
To illustrate this, we will consider the case when $\rk(E)=2$.
In this case, $E$ fits into the following short exact sequence:
\[
0\lra \cali_C\otimes L\lra E\lra \cali_D\otimes M\lra 0
\]
for some $C,D\subset X$ (codimension $\geq$ 2) and $L,M\in \Pic(X)$ with $P_{\calo_C}(t)=P_{\calo_D}(t)$ and $P_L(t)=P_M(t)$.
Then, $\Psi^{-1}(E)=(\eta, s)$ where $\eta\in \Ext^1(\cali_D\otimes M, \cali_C\otimes L)$ or $\eta\in \Ext^1(\cali_C\otimes L, \cali_D\otimes M)$, and $s$ is a section of the quotient sheaf, i.e. $\cali_D\otimes M$ or $\cali_C\otimes L$.
\end{remark}

Consider rank $2$ pairs $(E,s)$ with 
$v(E)=v$.
Define $L:=\det(E)$. 
When $(E,s)$ is saturated, we obtain the short exact sequence 
\begin{equation} \label{eq:sqc}
0 \lra \ox \stackrel{s}{\lra} E \lra \cali_C\otimes L \lra 0
\end{equation}
where $C\subset X$ is a closed subscheme of codimension at least 2. 
Note that the Hilbert polynomial $\tilde{P}$ of the subscheme $C$ is given by the formula
\begin{equation}\label{tilde P}
    \begin{array}{rl}
    \tilde{P}:=&P_{\OC}(t)=P_{\ox}(t) + P_{L^{\vee}}(t) - P_{E\otimes L^{\vee}}(t)  \\
         =&\displaystyle P_{\ox}(t)+\int_X e^{-c_1(v)}\cdot \ch(\calo_X(t))\cdot Td_X\\
         &\displaystyle -\int_X (\sum_{i=0}^3 ch_i(v))\cdot e^{-c_1(v)}\cdot \ch(\ox(t))\cdot Td_X\\
    \end{array}
\end{equation}
which is uniquely determined by class $v$ and the polarization $\calo_X(1)$.

For any line bundle $L\in \Pic(X)$, define the following moduli space 
\[
\cals^{\delta}_{X,L}:=\{(E,s)\in \cals^{\delta}_X| \det(E)=L\}.
\]
Similarly, define the following moduli space:
\[
\calg_{X,L}:=\{E\in \calg_X|\det(E)=L\}.
\]
We have the following proposition connecting the moduli space of pairs to the Hilbert scheme.

\begin{proposition}{\label{Mod:Hilb}}
Fix a polynomial $P$ of degree $n$ and rank $2$, and a line bundle $L\in \Pic(X)$.
For every positive polynomial $\varepsilon$ of degree at most $n-2$, there is a morphism $\Gamma:\mathcal{S}_{X, L}^\delta(P)\to \calh_X(\tilde{P})$ where $\delta=W_{\emptyset}-\varepsilon$ and $\tilde{P}$ is a polynomial given by formula (\ref{tilde P}); in addition, the fiber $\Gamma^{-1}(C)$ is isomorphic to $\mathbb{P}H^0(\omega_C\otimes\omega_X^\vee\otimes L^{\vee})$.
\end{proposition}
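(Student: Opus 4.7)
The plan is to construct $\Gamma$ by sending a $\delta$-stable pair $(E,s)$ to the scheme-theoretic vanishing locus of $s$, and then to describe the fiber via the classical Serre correspondence. Since $\delta = W_\emptyset - \varepsilon$ with $\deg(\varepsilon) \le n-2$, Proposition \ref{prop:saturated} tells us that every $\delta$-semistable pair $(E,s)$ is saturated, hence fits into a short exact sequence of the form \eqref{eq:sqc}, whose cokernel is $\cali_C \otimes L$ for a closed subscheme $C \subset X$ of codimension at least $2$ with Hilbert polynomial $\tilde P$ as in \eqref{tilde P}. To promote this pointwise assignment to an actual morphism, I work in families: given a flat family $(\mathbf{E}, \mathbf{s})$ of $\delta$-semistable pairs over $T$, the cokernel $\mathbf{Q} := \coker(\calo_{X \times T} \xrightarrow{\mathbf{s}} \mathbf{E})$ has torsion-free rank-$1$ fibers with constant Hilbert polynomial $P - P_\ox$ and is therefore $T$-flat. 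Twisting $\mathbf{Q}$ by the inverse of the determinant line bundle $\det(\mathbf{E})$ on $X \times T$ produces a flat family of ideal sheaves, hence a flat family $\mathbf{C} \subset X \times T$ of subschemes of Hilbert polynomial $\tilde P$. The universal property of the Hilbert scheme then gives a natural transformation $\mathfrak{S}^\delta_X(P) \to h_{\calh^{\tilde P}_X}$, which by co-representability of $\cals^\delta_X(P)$ (Theorem \ref{moduliofpairs}) descends to the desired morphism $\Gamma$.

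For the fiber, a point of $\Gamma^{-1}(C)$ is the isomorphism class of a $\delta$-stable pair $(E,s)$ fitting into an extension $0 \to \ox \to E \to \cali_C \otimes L \to 0$, and two such extensions give isomorphic pairs precisely when their classes are proportional. Hence the fiber is identified with the locus of $\delta$-stable pairs inside $\mathbb{P}\Ext^1(\cali_C \otimes L, \ox)$. A direct stability check shows that in the ``Hilbert'' chamber the split extension is destabilized by its rank-one summand $(\cali_C \otimes L, 0)$, while every non-split extension is $\delta$-stable (any proper sub-pair with non-trivial section must contain $\ox$, and any sub-pair with trivial section cannot destabilize since $\delta$ lies below $W_\emptyset$); consequently the fiber equals all of $\mathbb{P}\Ext^1(\cali_C \otimes L, \ox)$. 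Applying $\Hom(-, \ox)$ to $0 \to \cali_C \otimes L \to L \to \OC \otimes L \to 0$ and using the local Ext computation $\inext^2(\OC, \ox) \cong \omega_C \otimes \omega_X^\vee$ (valid in codimension $2$), together with the cohomological vanishings $H^1(L^\vee) = H^2(L^\vee) = 0$ built into the classical Serre correspondence, yields
\[
\Ext^1(\cali_C \otimes L, \ox) \cong H^0(X, \omega_C \otimes \omega_X^\vee \otimes L^\vee),
\]
as claimed.

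The principal technical points are the $T$-flatness of $\mathbf{Q}$ in families and the identification of $\Ext^1$ with sections of $\omega_C \otimes \omega_X^\vee \otimes L^\vee$. The former is standard given constancy of fiberwise Hilbert polynomials but deserves to be spelled out with care, particularly the use of the determinant construction to view $\det(\mathbf{E})$ as a line bundle on $X \times T$. The latter hinges on the local Ext sheaves of $\OC$ vanishing below degree $2$ (automatic when $C$ is a local complete intersection) and on cohomological vanishing hypotheses for $L^\vee$ that should be made explicit; this is the subtlest part of the argument, since without these hypotheses one only obtains $\Ext^1(\cali_C \otimes L, \ox)$ as an extension of a sub-space of $H^0(\omega_C \otimes \omega_X^\vee \otimes L^\vee)$ by the image of $H^1(L^\vee)$.
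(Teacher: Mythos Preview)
Your proof is correct and follows essentially the same route as the paper. Both arguments invoke Proposition~\ref{prop:saturated} to guarantee saturation, form the cokernel of the universal section in families, establish its flatness, and twist down to a flat family of ideal sheaves to obtain the morphism to the Hilbert scheme via its universal property. The only cosmetic difference is that the paper extracts the ideal sheaf by passing to the double dual $\mathbf{I}^{\vee\vee}$ (citing that it is invertible) and tensoring with $\mathbf{I}^\vee$, whereas you twist by $\det(\mathbf{E})^{-1}$ directly; these amount to the same thing since $\mathbf{I}^{\vee\vee}\cong\det(\mathbf{E})$.

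Your treatment of the fiber is in fact more complete than the paper's, which states the description $\mathbb{P}H^0(\omega_C\otimes\omega_X^\vee\otimes L^\vee)$ but does not justify it in the proof. You correctly flag that the identification $\Ext^1(\cali_C\otimes L,\ox)\cong H^0(\omega_C\otimes\omega_X^\vee\otimes L^\vee)$ requires the vanishings $H^1(L^\vee)=H^2(L^\vee)=0$ and the local-complete-intersection hypothesis on $C$; these are implicit in the paper (they appear in the introduction's setup of the Serre correspondence) but are not restated here, so your caution is well placed.
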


\begin{proof}
Given a flat family of $\delta$-semistable pairs $(\mathbf{E},\mathbf{s})$ parametrized by a scheme $T$, consider the sheaf $\mathbf{I}:=\coker(\mathbf{s})$ on $X\times T$. Note that $\mathbf{I}$ is flat over $T$, since $\mathbf{s}:\mathbf{\calo}_{X\times T}\to\mathbf{E}$ is a monomorphism and both $\mathbf{\calo}_{X\times T}$ and $\mathbf{E}$ are flat over $T$.

Let $P':=P_E(t)-P_{\calo_X}(t)$.
According to Proposition \ref{prop:saturated}, every point $[(E,s)]$ in $\cals_{X,L}^\delta(P)$ is saturated. Therefore, $\mathbf{I}$ is a flat family of rank $1$ torsion-free sheaves parametrized by $T$ with Hilbert polynomial $P'$ and fixed determinant $L$. 
The natural morphism $\mathbf{I}\to\mathbf{I}^{\vee\vee}$ is injective because $\mathbf{I}\otimes\calo_{X\times\{t\}}$ is torsion-free for every $t\in T$; in addition, $\mathbf{I}^{\vee\vee}$ is an invertible sheaf \cite[Lemma 6.13]{10.4310/jdg/1214445046}, so we can use the natural short exact sequence (note that $(\mathbf{I}^{\vee\vee})^\vee\simeq\mathbf{I}^{\vee}$)
$$ 0 \lra \mathbf{I}\otimes\mathbf{I}^\vee \lra \mathbf{I}^{\vee\vee}\otimes\mathbf{I}^\vee \simeq \calo_{X\times T} \lra \calo_Z \lra 0, $$
with $Z\subset X\times T$ being a closed subscheme, and $\calo_Z$ is flat over $T$. 

Analogously to the proof of Proposition \ref{Mod:Gieseker}, there is a transformation of functors $\mathfrak{S}^{\delta}_{X, L}(P)\to \mathfrak{G}_X(P')$, where 
$\mathfrak{S}^{\delta}_{X,L}$ and
$\mathfrak{G}_X$ are the moduli functors for semistable pairs and Gieseker semistable sheaves.
The above short exact sequence and the flatness of $\mathbf{I}$ imply that the moduli functor $\mathfrak{G}_X(P')$ is equivalent to the Hilbert functor $\mathfrak{H}_X({\tilde{P}})$.

Therefore, we obtained a natural transformation between the moduli functors $\mathfrak{S}_{X, L}^{\delta}(P)\to \mathfrak{H}_X({\tilde P})$, which induces the desired morphism between the corresponding moduli spaces.
\end{proof}

\begin{remark}
    For any scheme $T$ and a flat family of pairs $\mathbf{\calo}_{X\times T}\overset{s}{\to} \mathbf{E}$ on $X\times T$, the cokernel of $s$ is flat over $T$ because of the functoriality of $\mathfrak{S}_{X}^{\delta}(P)$. 
    Besides, being torsion-free is an open condition in a flat family. Therefore, being saturated is an open condition in a flat family of pairs.   
\end{remark}

\begin{remark}
    When $\Pic(X)=\mathbb{Z}$, every line bundle $L\in \Pic(X)$ is uniquely determined by its degree.
    Therefore, there is no need to specify the line bundle $L$ in $\cals_{X,L}^{\delta}(P)$, and we have $\cals^{\delta}_{X,L}(v)=\cals^{\delta}_{X}(v)$. In addition, the morphism $\Gamma$ in Proposition \ref{Mod:Hilb} becomes 
    \[
    \Gamma: \cals^{\delta}_X(P)\to \calh_X(\tilde{P}).
    \]
\end{remark}

\begin{remark}
    In Proposition \ref{Mod:Hilb}, the fixed line bundle $L$ plays a crucial role in establishing a relationship between $\cals^{\delta}_{X, L}$ and $\calh_X(\tilde{P})$.
    Without $L$, this direct connection is lost.
    Consider the following transformations of moduli functors
    \begin{enumerate}
        \item Functor $F_1$: $
\mathfrak{S}^{\delta}_X(P) \to \mathfrak{G}_X(P')$, where $P':=P-P(\ox)$.

    \noindent For every scheme $T$, $F_1(T)$ maps a family of pairs $(\mathbf{E}, \mathbf{s})$ over $T$ to its cokernel $\mathbf{I}$. Note that 
    $\mathbf{I}$ is a family of rank $1$ torsion-free sheaves of the form $\cali_Y\otimes \calo_X(c_1(v))\otimes M$, where $Y\subset X$ has codimension at least $2$, $c_1(v)$ is the first Chern character of $v$, and $M$ is a line bundle in $\Pic^0(X)$. 
    \item 
    \text{Functor} $F_2$: $\mathfrak{G}_X(P')\to \mathfrak{P}$, where $\mathfrak{{P}}$ is the Picard functor.
    
    \noindent For every scheme $T$, $F_2(T)$ sends a family of rank $1$ sheaves $\mathbf{I}$ over $T$ to $\det(\mathbf{I})\otimes \calo_{X\times T}(-c_1)$, which is a family of line bundles of degree $0$.
     \end{enumerate}
     
    These two functors induce the following morphisms between moduli spaces:
    \begin{enumerate}
        \item $\Gamma_1: \cals^{\delta}_X(P)\to \calg_X(P').$
        \item $\Gamma_2: \calg_X(P')\to \Pic^0(X).$
    \end{enumerate}
    
    Moreover, the fibers of $\Gamma_1$ at $M\in \calg_X(P')$ and $\Gamma_2$ at $L\in \Pic^0(X)$ are $\mathbb{P}\Ext^1(M, \calo_X)$ and $\calh_X(\tilde{P})$ respectively. 
    In particular, if we consider the moduli space $\cals^{\delta}_{X,L}(P)$ or if $\Pic(X)=\mathbb{Z}$, then $\Gamma_2$ becomes trivial, and $\calg_X(P')\simeq \calh_X(\tilde{P})$. In this case, $\Gamma_1$ is reduced to $\Gamma$ in Proposition \ref{Mod:Hilb}.
    \end{remark}

Since being torsion-free and Gieseker semistable are open conditions in flat families, we have the following corollary.

\begin{corollary}{\label{ratl map 1}}
    Let $v$ be any numerical class of rank $2$ with $c_1(v)>0$ and $\delta$  be any polynomial in $\mathbb{Q}[t]_{>0}$ that is not a critical value for $v$. For every irreducible component $\calp\subset \mathcal{S}_{X,L}^{\delta}(v)$, we have
    \begin{enumerate}
        \item if there exists a saturated semistable pair $(E,s)\in \calp$, then there exists a rational map $\Gamma: \calp \dashrightarrow \calh_X(\tilde{P})$.
        \item if there exists a pair $(E,s)\in \calp$ such that $E$ is Gieseker semistable, then there is a rational map $\Psi: \calp \dashrightarrow \calg_X(P)$.
    \end{enumerate}
\end{corollary}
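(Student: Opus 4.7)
The plan is to identify open dense subsets of $\calp$ on which the constructions of Propositions \ref{Mod:Gieseker} and \ref{Mod:Hilb} apply verbatim, and then to invoke the universal property of coarse moduli spaces to obtain morphisms from those open subsets. Extending by the usual rational-map convention then yields the desired $\Gamma$ and $\Psi$.

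For part (1), let $U_1 \subset \calp$ be the locus of saturated pairs. By the remark immediately preceding the statement, saturation is an open condition in any flat family of pairs, so $U_1$ is open in $\calp$; the hypothesis that $\calp$ contains at least one saturated pair, combined with irreducibility of $\calp$, makes $U_1$ dense. On families parametrized by schemes mapping to $U_1$, the construction of Proposition \ref{Mod:Hilb} applies unchanged: the restriction $\delta = W_\emptyset - \varepsilon$ entered that proof only in order to force saturation, which we now impose by hand. Thus for any $T \to U_1$ with associated family $\calo_{X\times T} \xrightarrow{\mathbf{s}} \mathbf{E}$, the cokernel $\mathbf{I} := \coker(\mathbf{s})$ is $T$-flat and fiberwise torsion-free of rank one; $\mathbf{I}^{\vee\vee}$ is invertible, and the inclusion $\mathbf{I}\otimes\mathbf{I}^{\vee} \hookrightarrow \calo_{X\times T}$ cuts out a $T$-flat closed subscheme $Z \subset X \times T$ with fiberwise Hilbert polynomial $\tilde P$. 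The resulting natural transformation $\mathfrak{S}^\delta_X(v)|_{U_1} \to \mathfrak{H}^{\tilde P}_X$ descends to a morphism $U_1 \to \calh_X^{\tilde P}$, yielding the rational map $\Gamma: \calp \dashrightarrow \calh_X^{\tilde P}$.

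For part (2), let $U_2 \subset \calp$ be the locus of pairs $(E,s)$ for which $E$ is Gieseker semistable. Since Gieseker semistability is an open condition in flat families of pure sheaves, $U_2$ is open; by hypothesis and irreducibility of $\calp$, it is dense. Proposition \ref{Mod:Gieseker} is stated under $\delta < 1/r$, but that restriction was used only (via Proposition \ref{prop:stability}) to force Gieseker semistability of the underlying sheaf, which we now assume directly on $U_2$. The forgetful assignment $(E,s) \mapsto E$ therefore defines a natural transformation $\mathfrak{S}^\delta_X(v)|_{U_2} \to \mathfrak{G}_X(v)$, which descends via the universal property of coarse moduli spaces to a morphism $U_2 \to \calg_X(v)$ and hence a rational map $\Psi: \calp \dashrightarrow \calg_X(v)$.

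No serious obstacle arises; the work is essentially a matter of restricting previously established functorial constructions to open sub-functors. The one point worth flagging is the observation that the earlier hypotheses on $\delta$ in Propositions \ref{Mod:Gieseker} and \ref{Mod:Hilb} served only to enforce Gieseker semistability and saturation respectively, both of which are now hypotheses of the corollary rather than consequences of the choice of $\delta$.
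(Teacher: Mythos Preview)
Your proposal is correct and follows exactly the approach of the paper, which simply remarks that being torsion-free (hence saturation) and Gieseker semistability are open conditions in flat families and lets the corollary follow from Propositions \ref{Mod:Gieseker} and \ref{Mod:Hilb}. Your write-up spells out in detail precisely what the paper leaves implicit, including the useful observation that the constraints on $\delta$ in those propositions served only to enforce the conditions now taken as hypotheses.
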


Now we describe in the following theorem the framework of wall-crossings for a rank $2$ class with an illustration in Figure \ref{Gieseker-Hilbert}. The theorem follows from the definitions of $W_G$, $W_T$ in Section $2.4$, and Propositions \ref{Mod:Gieseker} and \ref{Mod:Hilb}.

\begin{theorem}{\label{Gie-Hilb}}
Let $X$ be a smooth threefold with a fixed ample class $\ox(1)$ and a fixed line bundle $L\in \Pic(X)$. 
Then, on the positive axis of rational coefficient polynomials ($\mathbb{Q}[t]_{>0}$) ordered lexicographically, there is a collapsing wall $W_{\emptyset}$, after which no pair is $\delta$-semistable. 
There are two more walls $W_G \leq W_T$ (possibly empty) between $0$ and $W_{\emptyset}$, such that there exist morphisms from the moduli space $\cals^{\delta}_{X, L}(P)$ to $\calg_{X,L}(P)$ and $\calh_X(\tilde{P})$ for $\delta\in(0, W_1)$ and $\delta\in(W_T, W_{\emptyset})$. 
The fibers over $E\in \calg^{st}_{X,L}(P)$ and $[C]\in \calh_X(\tilde{P})$ are $\mathbb{P}H^0(X, E)$ and $\mathbb{P}\Ext^1(\cali_C\otimes L, \calo_X)$ respectively.
\end{theorem}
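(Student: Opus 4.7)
The plan is to assemble the theorem from ingredients already in place: the three walls come from Section~2.4, while the two forgetful morphisms are essentially consequences of Propositions~\ref{Mod:Gieseker} and~\ref{Mod:Hilb}. The only genuinely new piece is the reformulation of the Hilbert fiber as a projectivised $\Ext$ group. To start, I would fix the wall structure. By Definition~\ref{colwall}, every pair with $v(E)=v$ is destabilised by $(\ox,\mathbf{1}_{\ox})$ once $\delta>W_{\emptyset}$, so $\cals^{\delta}_X(P)=\emptyset$ there. Theorem~\ref{Thm:finite} ensures that the interval $(0,W_{\emptyset})$ contains only finitely many walls for $v$; I would define $W_1$ to be the smallest and $W_T$ the largest, with the convention that both are declared ``empty'' (so that the Gieseker and Hilbert chambers coincide with all of $(0,W_{\emptyset})$) if no walls exist. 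The inequality $W_1\le W_T$ is tautological, and Theorem~\ref{Thm:finite} also gives that $\cals^{\delta}_X(P)$ is constant on each of the two chambers $(0,W_1)$ and $(W_T,W_{\emptyset})$.

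For the Gieseker side, Proposition~\ref{Mod:Gieseker} produces a forgetful morphism $\Psi:\cals^{\delta}_X(P)\to\calg_X(P)$, $(E,s)\mapsto E$, whenever $\delta<1/r=1/2$. Because the moduli space is constant throughout the chamber $(0,W_1)$, the same assignment defines $\Psi$ on the entire chamber, and the underlying sheaf of any $\delta$-semistable pair remains Gieseker semistable since the semistable locus does not change as $\delta$ varies inside the chamber. For $E\in\calg^{\rm st}_X(P)$, Proposition~\ref{prop:converse} tells us that every nontrivial section of $E$ yields a $\delta$-stable pair, with two sections giving isomorphic pairs precisely when they are proportional; hence $\Psi^{-1}(E)=\mathbb{P}H^0(X,E)$.

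For the Hilbert side, Corollary~\ref{torfreechamber} says every $\delta$-semistable pair with $\delta\in(W_T,W_{\emptyset})$ is saturated, so Proposition~\ref{Mod:Hilb} produces $\Gamma:\cals^{\delta}_X(P)\to\calh^{\tilde P}_X$ with $\tilde P$ given by formula~(\ref{tilde P}). To identify the fiber I would argue directly from the defining sequence
\[
0\to\ox\stackrel{s}{\to}E\to\cali_C\otimes L\to 0,\qquad L=\det E,
\]
whose isomorphism classes as pairs are, by the general theory of extensions, in bijection with $\Ext^1(\cali_C\otimes L,\ox)/\kappa^*$, giving $\Gamma^{-1}([C])=\mathbb{P}\Ext^1(\cali_C(c_1(v)),\ox)$. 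The comparison with $\mathbb{P}H^0(\omega_C\otimes\omega_X^\vee\otimes L^\vee)$ from Proposition~\ref{Mod:Hilb} is the classical Serre-duality chain: applying $\Hom(-,\ox)$ to $0\to\cali_C\otimes L\to L\to\OC\otimes L\to 0$ and using $H^1(L^\vee)=H^2(L^\vee)=0$ yields $\Ext^1(\cali_C\otimes L,\ox)\cong\Ext^2(\OC\otimes L,\ox)$, and Grothendieck--Serre duality for the codimension-$2$ embedding $C\hookrightarrow X$ identifies the latter with $H^0(\omega_C\otimes\omega_X^\vee\otimes L^\vee)$. The only real obstacle is the bookkeeping needed to extend the morphisms from small $\delta$ to the full chamber, which is handled by the constancy of $\cals^{\delta}_X(P)$ inside a chamber.
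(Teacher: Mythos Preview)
Your proposal is correct and follows essentially the same route as the paper, which simply states that the theorem ``follows from the definitions of $W_1$, $W_T$ in Section~2.4, and Propositions~\ref{Mod:Gieseker} and~\ref{Mod:Hilb}.'' You supply the details the paper omits: the chamber-constancy argument (via Theorem~\ref{Thm:finite}) to propagate the morphisms of Propositions~\ref{Mod:Gieseker} and~\ref{Mod:Hilb} from the small ranges $\delta<1/2$ and $\delta=W_{\emptyset}-\varepsilon$ to the full chambers $(0,W_1)$ and $(W_T,W_{\emptyset})$, and the direct identification of the Hilbert fibre with $\mathbb{P}\Ext^1(\cali_C(c_1(v)),\ox)$ together with the Serre-duality comparison to the form stated in Proposition~\ref{Mod:Hilb}. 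These additions are sound and make the argument more self-contained than the paper's one-line reference.
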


\begin{figure}[ht]
\begin{tikzpicture}
\draw[thick, ->] (-0.5,0)--(10,0);
\draw (9,-0.2) -- (9, 0.2);
\node at (9, -0.5){$W_{\emptyset}$};
\draw (2,-0.2) -- (2, 0.2);
\node at (2, -0.5){$W_{G}$};
\node at (4.5, -0.5){$\cdots$};
\draw (7,-0.2) -- (7, 0.2);
\node at (7, -0.5){$W_{T}$};
\node at (0.5, 0.5){$\mathbb{Q}[t]-$axis for $\delta$};
\node at (0, -0.5){$0$};
\draw (0,-0.2) -- (0, 0.2);
\draw [decorate,decoration={brace,amplitude=5pt,mirror,raise=5ex}]
   (7,0) -- (9,0) node[midway,yshift=-4em]{``Hilbert" chamber};
\draw [decorate,decoration={brace,amplitude=5pt,mirror,raise=5ex}]
   (0,0) -- (2,0) node[midway,yshift=-4em]{``Gieseker" chamber};
\end{tikzpicture}
\caption{Gieseker-Hilbert}
\label{Gieseker-Hilbert}
\end{figure}

\begin{remark}
    We are interested in the components $\calp\subset \cals^{\delta}_{X, L}(v)$ such that a general pair in $\calp$ is both very stable and saturated.
    This is when the modular Serre correspondence occurs, and there exist rational maps from $\calp$ to both the Gieseker moduli space and the Hilbert scheme (shown in Figure \ref{Serrecor1}). 
    These components, which always exist under general conditions, are contained in the moduli space $\cals^{\delta}_{X, L}(v)$ for all $\delta\in(0, W_{\emptyset})$. 
\end{remark}

\begin{figure}[ht]
\begin{tikzcd}   &\calp\arrow[dl, dotted, "\Psi", swap]\arrow[dr, dotted, "\Gamma"]&\\
\calg_X(v)&&\calh_X(\tilde{P})\\
\end{tikzcd}
\caption{Modular Serre correspondence}
\label{Serrecor1}
\end{figure}


\section{General wall-crossings at critical values} \label{sec:wallx}

This section studies general wall-crossings for a rank $2$ class on a smooth polarized projective threefold $(X, \calo_X(1))$ with $\Pic(X)=\mathbb{Z}$. 
We begin with a rank $2$ normalized class $v$ and then consider its twists $v(k)=v\otimes \calo_X(k)$ for $k\in \mathbb{Z}_+$ (as in Section \ref{sec:pairs}). This ensures that sheaves $E$ with $v(E)=v$ will have sufficiently many sections to facilitate wall-crossings. 
While $k=1$ is sufficient for all examples in Section \ref{sec:ex}, choosing an optimal twist in general is non-trivial: different $k$ correspond to different Hilbert schemes and extension classes.

We proceed as follows. First, we describe the general wall-crossing behavior for a rank $2$ class. Because this procedure connects the Gieseker moduli space to a Hilbert scheme, we next characterize the curves in the Hilbert scheme that are perturbed at a wall. Finally, we identify the set of bad pairs — those are just “temporarily stable" as the stability parameter varies.

For notational simplicity, we adopt the following conventions.
The moduli space $\cals^{\delta}_X(v(k))$ for $\delta=W$ is denoted by $\cals_{W}$, omitting $X$ and $v(k)$ if no confusion arises.
Similarly, $\cals_{W^+}$ (resp. $\cals_{W^-}$) denotes the moduli space $\cals^{\delta}_X(v(k))$ for $\delta$ in the chamber above $W$, i.e. $\delta\in (W,W')$ (resp. below $W$, i.e. $\delta\in (W'',W)$), where $W'$ and $W''$ are the adjacent walls to $W$.

\subsection{Wall-crossing phenomena}\label{subsec:wallxph}
Wall-crossings occur naturally when stability conditions vary.
Here, we describe walls for rank $2$ coherent pairs and the transformations for moduli spaces when crossing walls.

\subsubsection{Description of walls for rank 2 pairs}
Recall that a wall for $v(k)$ is defined by a short exact sequence as follows (Section \ref{rk 2 wall})
\begin{equation}{\label{wall2}}
W: \quad 0\lra (\mathcal{I}_A(l), 1) \lra (E(k), s) \lra (\mathcal{I}_B(2k-l+c_1), 0)\lra 0,
\end{equation}
where $v(E)=v$, $A,B\subset X$ are one-dimensional schemes (possibly empty), and $0\leq l \leq k$.
Denote by $W'$, the flipped sequence of $W$, i.e.
\begin{equation}{\label{flipped wall}}
W': \quad 0\lra (\mathcal{I}_B(2k-l+c_1), 0) \lra (F(k), s') \lra (\mathcal{I}_A(l), 1)\lra 0. 
\end{equation}
The critical value $\delta$ for $W$ (equivalently for $W'$) is the solution to the equation $$p^{\delta}_{(\cali_A(l), 1)}=p^{\delta}_{(\cali_{B}(2k-l+c_1), 0)}=p^{\delta}_{(E(k), s)}$$
By definition (see equation (\ref{Hilb poly})), we have
$$P_{\mathcal{I}_A(l)}(t)+\delta=\frac{P_{E(k)}(t)+\delta}{2}=P_{\mathcal{I}_B(2k-l+c_1)}(t),$$
and one computes directly that
$$\delta=P_{E(k)}(t)-2P_{\mathcal{I}_A(l)}(t)=-P_{E(k)}(t)+2P_{\mathcal{I}_B(2k-l+c_1)}(t).$$ 
Evidently, $\delta$ is decreasing  with respect to $P_{\mathcal{I}_A(l)}(t)$ and increasing with respect to $P_{\mathcal{I}_B(2k-l+c_1)}(t)$. 
We showed in Section \ref{sat wall} that the largest wall is the collapsing wall $W_{\emptyset}$, obtained when $l=0$ and $A=\emptyset$. 
As $\delta$ gets smaller, the second largest wall is defined by the pair $(\mathcal{I}_A(1), 1)$ (resp. $(\mathcal{I}_B(2k-1+c_1), 0)$), where $P_{\mathcal{I}_A(1)}(t)$ gets its minimum (resp. $P_{\mathcal{I}_B(2k-1+c_1)}(t)$ gets its maximum). 
This occurs when $B=\emptyset$, so the second largest wall is defined by the following sequence:

\begin{equation}{\label{secondwall}}
0\lra (\mathcal{I}_A(1), 1) \lra (E(k), s) \lra (\mathcal{O}_{\mathbb{P}^3}(2k-1+c_1), 0)\lra 0,
\end{equation}
where $A\subset \mathbb{P}^3$ is a one-dimensional scheme. 
Following this pattern, we introduce groups of walls as follows.

\begin{definition}{\label{wall group}}
For any $e\in \mathbb{Z}_{\geq 0}$, we say that a wall $W$ for $v(k)$ is in \textit{group $e$} if it is defined by a sequence in the following form
\[
0\lra (\mathcal{I}_A(e), 1) \lra (E(k), s) \lra (\cali_B(2k-e+c_1), 0)\lra 0,
\]
where $A, B\subset X$ are one-dimensional schemes. 
\end{definition}

Figure \ref{Groups of walls} below illustrates the locations of walls in different groups.

\begin{figure}[ht]
\begin{tikzpicture}
\draw[thick, ->] (-0.5,0)--(11.9,0);
\draw (11,-0.2) -- (11, 0.2);
\node at (11, -0.5){$W_{\emptyset}$};
\draw (10,-0.2) -- (10, 0.2);
\node at (10, -0.5){$W_{1,1}$};
\node at (9, -0.5){$\cdots$};
\draw (8,-0.2) -- (8, 0.2);
\node at (8, -0.5){$W_{1,n_1}$};
\draw (7,-0.2) -- (7, 0.2);
\node at (7, -0.5){$W_{2,1}$};
\node at (6, -0.5){$\cdots$};
\draw (5,-0.2) -- (5, 0.2);
\node at (5, -0.5){$W_{2,n_2}$};
\node at (0.5, 0.5){$\mathbb{Q}[t]-$axis for $\delta$};
\node at (0, -0.5){$0$};
\draw (0,-0.2) -- (0, 0.2);
\draw [decorate,decoration={brace,amplitude=5pt,mirror,raise=5ex}]
  (8,0) -- (10,0) node[midway,yshift=-4em]{group 1};
\draw [decorate,decoration={brace,amplitude=5pt,mirror,raise=5ex}]
  (10.5,0) -- (11.5,0) node[midway,yshift=-4em]{only wall};
\node at (11,-1.8){in group 0}; 
\draw [decorate,decoration={brace,amplitude=5pt,mirror,raise=5ex}]
  (5,0) -- (7,0) node[midway,yshift=-4em]{group 2};
\node at (2.5,-0.5){$\cdots$};  
\end{tikzpicture}
\caption{Groups of walls}
\label{Groups of walls}
\end{figure}

\begin{remark}
    For simplicity, we abuse the notation a bit for the rest of this paper. If there is no ambiguity, $W$ can denote either the short exact sequence (\ref{flipped wall}) or (\ref{secondwall}) when referring to the short exact sequence defining a wall. We also use $W$ to denote the critical value $\delta$ of that wall. 
\end{remark}

\subsubsection{Description of the contracting loci on two sides of a wall}

Let $W$ be a wall defined by the short exact sequence
\begin{equation}{\label{wall}}
W: \quad 0\lra (\mathcal{L}, 1) \lra (E,1)\lra (\mathcal{L}',0)\lra 0,
\end{equation}
where $\mathcal{L}, \mathcal{L}'\in \Coh(X)$ are torsion-free sheaves of rank $1$.
It is straightforward to check that when $\delta<{W}$ (resp. $\delta>{W}$), we have the inequality $p_{(\call, 1)}{(t)}<p_{(E, 1)}({t})$ (resp. $p_{(\call, 1)}{(t)}>p_{(E, 1)}({t})$). 

$\bullet$ When $\delta={W}$, there is a closed subscheme $\cals_{W}^{ss} \subset \cals_{W}$ parameterizing strictly semistable pairs $(\mathcal{L}, 1)\bigoplus (\mathcal{L}', 0)$ in the moduli space $\cals_{W}$. 

$\bullet$
For $\delta$ slightly smaller than  $W$, i.e. $\delta\in ({W}-\varepsilon, {W})$, denote the corresponding moduli space of semistable pairs by $\cals_{W^-}$. 
There is a (locally) closed subscheme $\cals_{W}^{-}\subset \cals_{W^-}$ parameterizing extensions in sequence (\ref{wall2}) with a morphism 
$$\phi_{W}^{-}: \cals_{W}^{-}\lra \cals_{W}^{ss}.$$ 
The fiber of $\phi^-_W$ over $(\call, 1)\bigoplus(\call', 0)\in \cals_W^{ss}$ is $\mathbb{P}\Ext^1((\mathcal{L}',0), (\mathcal{L}, 1))$.

$\bullet$
Similarly, when $\delta \in (W, W+\varepsilon)$, there is a (locally) closed subscheme $\cals_{W}^{+}\subset \cals_{W^+}$ parameterizing extensions in the flipped sequence  
$$W': \quad 0\lra (\mathcal{L}',0) \lra (E,1)\lra (\mathcal{L}, 1)\lra 0.$$ 
There is a morphism $\phi_{W}^+: \cals_{W}^+\to \cals_{W}^{ss}$ whose fiber over $(\mathcal{L}, 1)\bigoplus (\mathcal{L}', 0)\in \cals_W^{ss}$ is $\mathbb{P}\Ext^1((\mathcal{L}, 1), (\mathcal{L}',0))$.

$\bullet$
Let $\Phi^{\pm}_{W}: \cals_{W^{\pm}}\to \cals_{W}$ be the morphisms between the moduli spaces of pairs. 
We summarize all the above notations in the following commutative diagram:

\begin{figure}[ht]
\begin{tikzcd}
\cals_{W^{-}}\arrow[r,"\Phi^-_{W}"]&\cals_{W}&\cals_{W^{+}}\arrow[l, "\Phi^+_{W}" above]\\
\cals^-_{W}\arrow[r, "\phi^-_{W}"]\arrow[u,hook]&\cals^{ss}_{W}\arrow[u,hook]&\cals^+_{W}\arrow[l, "\phi^+_{W}" above]\arrow[u,hook].\\
\end{tikzcd}
\caption{Wall-crossing morphisms.}{\label{wall-crossing2}}
\end{figure}

\subsubsection{(Birational) transformations at wall-crossings}{\label{birat'l trans}}

When crossing a wall, how the subschemes $\cals_W^{\pm}\subset \cals_{W^{\pm}}$, see Figure (\ref{wall-crossing2}), are glued to the ambient spaces $\cals_{W^{\pm}}$ can be studied using the elementary modification. 
This technique for sheaves can be found in \cite[Page 41]{friedman1998algebraic}.
In the setting of stale pairs, this technique can be found in \cite[Section 3]{Thaddeus1992StablePL} for smooth curves, and in \cite[Section 4]{he1996espaces} for $\p2$. 
In the setting of Bridgeland stability, this technique in derived categories can be found in \cite{MR2998828}, \cite{2014MMP}, \cite{MR3803142}.

If $X$ is a smooth curve or surface, then the moduli space (stack) of semistable objects (either as pairs or complexes) on $X$ is irreducible, and crossing walls results in birational transformations for the moduli space. 
The transformation is either a \emph{flip} or a \emph{divisorial contraction}.
In this context, the term flip refers to the birational transformations arising from wall-crossing in the sense of  \cite[Definition 1.5]{bradlow2003coherent}. 
More precisely, a flip $f: \cals_{W^-}\dashrightarrow \cals_{W^+}$ (Figure \ref{flip}) means that the morphisms $\Phi^{\pm}_W$ are small contractions, i.e. the subschemes $\cals_W^{\pm}\subset \cals_{W^{\pm}}$ contracted by $\Phi^{\pm}_{W}$ have codimensions at least two.
We say that $f$ is a divisorial contraction if the subscheme $\cals_W^-\subset \cals_{W^-}$ contracted by $f$ has codimension $1$ and $\Phi_W^+$ is an isomorphism. 

\begin{figure}[ht]
\label{flip}
\begin{tikzcd}
\cals_{W^{-}}\arrow[dr,"\Phi^-_{W}"]\arrow[rr, dashed, "f"]&&\cals_{W^{+}}\arrow[dl, "\Phi^+_{W}" above]\\
\cals^-_{W}\arrow[dr, "\phi^-_{W}"]\arrow[u,hook]&\cals_{W}&\cals^+_{W}\arrow[dl, "\phi^+_{W}" above]\arrow[u,hook]\\
&\cals^{ss}_{W}\arrow[u,hook]&\\
\end{tikzcd}
\caption{Flip}
\end{figure}

The elementary modification for pairs on $\p2$ (\cite[Section 4]{he1996espaces}) works for $\mathbb{P}^3$ as well. 
So in later sections, we only focus on computing the extension classes to determine the type of transformations. 
For the details of this gluing technique, we refer to \cite[Section 4.8]{he1996espaces}.

On a smooth threefold, wall-crossings for moduli spaces of Bridgeland semistable objects were described in some situations.
Except for a flip (or a flop), it can also be a divisorial contraction or adding (removing) a component (\cite{schmidt2020bridgeland,MR3803142,mu2024new}).
We will see later in Sections 5 and 6 that wall-crossings for stable pairs on $\mathbb{P}^3$ are similar, which means it can be a flip, divisorial, or adding (removing) a component.

\subsection{One-dimensional schemes created when crossing a wall}\label{subsec:wall1dim}

Consider a smooth projective threefold $(X,\calo_X(1))$ with $\Pic(X)=\mathbb{Z}$; set  $d:=c_1(\calo_X(1))^3$.
Let $W$ is a wall defined by the sequence 
\begin{equation}{\label{wall1}}
0\lra (\mathcal{I}_A(k+l), 0)\lra (E(k),s)\lra (\mathcal{I}_B(k-l+c_1), 1) \lra 0,
\end{equation}
where $A,B\subset X$ are one-dimensional schemes (possibly empty) and $0\leq l \leq k+c_1$.
When $\delta$ gets larger and crosses $W$,
the (locally) closed scheme $\cals_W^+$ is created in the moduli space, see Figure \ref{wall-crossing2}.   
We know from Proposition \ref{bad pair} that a general stable pair  $(E(k), s)\in \cals_W^+$ is saturated, and hence it corresponds to a curve $Y\subset X$.
Next, we describe this one-dimensional scheme $Y$ created at a wall given by sequence (\ref{wall1}). 

\subsubsection{Description of $Y$}
Let $(E(k),s)\in \cals_W^+$ be a saturated pair corresponding to a one-dimensional scheme $Y$. 
By Definition \ref{defn:sat}, we have
$$ 0\lra\calo_X\overset{s}{\lra} E(k) \lra \cali_Y(2k+c_1) \lra 0. $$ 
Besides, sequence  (\ref{wall1}) implies the following diagram ($a:=k-l+c_1$)

\begin{equation}{\label{diagrampair}}
\begin{tikzcd}
{}&{}&\mathcal{O}_{X}\arrow{r}\arrow[d, "s"]&\mathcal{O}_{X}\arrow[d, "s_1"]&\\
0\arrow{r}&\mathcal{I}_A(k+l)\arrow{r}\arrow[d, "="]&E(k)\arrow{r}\arrow{d}&\mathcal{I}_B(a)\arrow{r}\arrow{d}&0\\
0\arrow{r}&\mathcal{I}_A(k+l)\arrow{r}&\mathcal{I}_Y(2k+c_1)\arrow{r}&\mathcal{I}_{B|S}(a)\arrow{r}&0,\\
\end{tikzcd}
\end{equation}
in which the section $s_1$ determines a hypersurface $S\subset X$ of degree $a$. 

Let $A_0, B_0$ (resp. $A_1, B_1$) be the maximum zero-dimensional (resp. one-dimensional) subschemes of $A$ and $B$, and let $d_A$ (resp. $d_B$) be the degree of $A_1$ (resp. $B_1$). The bottom row of diagram (\ref{diagrampair}) implies the following diagram 
\[
\begin{tikzcd}
{0}\arrow{r}&\mathcal{I}_A(k+l)\arrow{r}\arrow[d, "i_1"]&\mathcal{I}_{Y}(2k+c_1)\arrow{r}\arrow[d, "i_2"]&\mathcal{I}_{B|S}(a)\arrow{r}\arrow[d]&0\\
0\arrow{r}&\mathcal{O}_{X}(k+l)\arrow{r}\arrow{d}&\mathcal{O}_{X}(2k+c_1)\arrow{r}\arrow{d}&\mathcal{O}_S(2k+c_1)\arrow{r}\arrow{d}&0\\
0\arrow{r}&\mathcal{O}_A(k+l)\arrow{r}&\mathcal{O}_Y(2k+c_1)\arrow{r}&F\arrow{r}&0,\\
\end{tikzcd}
\]
in which $i_1$ and $i_2$ are injections to their reflexive hulls, and the third column is obtained via the snake lemma; the sheaf $F$ is just the cokernel of the monomorphism $\mathcal{O}_A(k+l)\to\mathcal{O}_Y(2k+c_1)$.
In addition, all three columns are exact sequences. 

The third column can be rewritten as 
$$ 0\lra \mathcal{I}_{B_0|S}(a^2H^2-B_1)\lra \mathcal{O}_S(2k+c_1)\lra F\lra 0. $$ 
Hence, $F$ fits into the sequence
$$ 0\lra \mathcal{O}_{B_0}\lra F\lra \mathcal{O}_C(2k+c_1)\lra 0, $$
where $C\in |\mathcal{O}_S(a(k+l)H^2+B_1)|$ is a divisor in $S$. This leads to the following diagram describing $Y$

\begin{equation}{\label{descriptionY}}
\begin{tikzcd}
&&&\mathcal{O}_{B_0}\arrow{d}&\\
0\arrow{r}&\mathcal{O}_A(-k+l-c_1)\arrow{r}&\mathcal{O}_Y\arrow{r}&F(-2k-c_1)\arrow[d]\arrow{r}&0\\
&&&\mathcal{O}_C.&\\
\end{tikzcd}
\end{equation}

Roughly speaking, $Y$ is the union (scheme theoretically) of $A$, $C$ and $B_0$, subject to the conditions that $C, B_0\subset S$ and the length of $A_1\cap C$ is $|A_1\cdot aH|=|S\cdot A_1|$.

\begin{remark}{\label{curvetopair}}
Diagram (\ref{descriptionY}) does not necessarily recover diagram (\ref{diagrampair}) or sequence  (\ref{wall1}). We will show a counter-example in the last section (compare with Lemma \ref{singularE} and Remark \ref{counterexample}).
\end{remark}

\begin{remark}
Assume that $\cdots<W_3 <W_2<W_1$ are walls in group $e>0$ (see Definition \ref{wall group}). Let $S\subset X$ be a hypersurface of degree $e$, and $C_i$ be the one-dimensional scheme corresponding to a general saturated pair in $\cals_{W_i}^+$. The above argument shows that $C_i$ is obtained from $C_{i-1}$ by setting some component in $C_{i-1}$ free from the hypersurface $S$.
\end{remark}

\subsubsection{Two fibrations for $\cals_W^+$} \label{subsec:fibrations}

Let $W\in \mathbb{Q}[t]_{>0}$ be a wall, and assume that all pairs in $\cals_W^+$ are saturated. 
Let 
$$\calh_W:=\{Y\subset X| Y\ \text{satisfies diagram (\ref{descriptionY}) for}\ W \}\subset \calh_X(\tilde{P})$$ 
be the subscheme parameterizing curves $Y$ satisfying diagram (\ref{descriptionY}) ($\tilde{P}$ is the Hilbert polynomial of $\calo_Y$). 
There are two morphisms $\cals_W^+\to \calh_X(\tilde{P})$ and $\cals_W^+\to \cals^{ss}_W$ whose fibers are
$\mathbb{P}\Ext^1(\cali_Y(2k+c_1), \calo_X)$
and 
$\mathbb{P}\Ext^1((\cali_B(a), 1), (\cali_A(k+l), 0))$
respectively (illustrated in Figure \ref{Fibrations}, $a=k-l+c_1$). 

\begin{figure}[ht] 
\begin{tikzcd}
\mathbb{P}\Ext^1((\cali_{B}(a), 1), (\cali_{A}(k+l), 0))\arrow[dr, "fiber"]&&\mathbb{P}\Ext^1(\mathcal{I}_{Y_i}(2k+c_1), \mathcal{O}_{\mathbb{P}^3})\arrow[dl, "fiber", swap]\\
&\cals^+_{W}\arrow[dr]\arrow[dl]&\\
\calh_W&&\cals^{ss}_{W}\\
\end{tikzcd}
\caption{Fibrations}
\label{Fibrations}
\end{figure}

\subsubsection{Criterion for Gieseker stability}
Lastly, in this subsection, we show a criterion when a rank $2$ sheaf $E$ is Gieseker semistable in terms of the corresponding curve.
In \cite[Proposition 3.1]{hartshorne1978stable}, the author gave a criterion for a rank $2$ vector bundle $E$ on $\p3$ to be $\mu$-(semi)stable in terms of the corresponding curve. 
Let $E$ be a rank $2$ vector bundle on $\p3$ and $c_1:=c_1(E)$. Assume that there is a one-dimensional scheme $Y\subset \p3$ satisfying
\[
0\lra \mathcal{O}_{\p3} \lra E \lra \mathcal{I}_{Y}(c_1) \lra 0.
\]

\begin{proposition}[\cite{hartshorne1978stable} Proposition 3.1]
Let $E$ be a rank 2 locally free sheaf. $E$ is $\mu$-stable (semistable) if and only if 

\begin{enumerate}
    \item $c_1(E)>0$ ($c_1(E)\geq 0$), and 
    \item $Y$ is not contained in any surface of degree $\leq \frac{1}{2}(c_1(E))$ ($<\frac{1}{2}(c_1(E))$).
    \end{enumerate}

\end{proposition}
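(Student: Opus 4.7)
The plan is to reduce $\mu$-(semi)stability of the rank 2 bundle $E$ to the non-existence of destabilizing line bundle inclusions $\calo(k)\into E$, and then translate such inclusions into hypersurfaces containing $Y$ via the defining sequence $0\to\calo_{\p3}\to E\to\cali_Y(c_1)\to 0$. The crucial ingredient throughout is the vanishing $H^1(\p3,\calo(n))=0$ for every $n\in\Z$.

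First, any rank 1 torsion-free subsheaf $F\into E$ has reflexive hull $F^{\vee\vee}$ which, on smooth $\p3$, is a line bundle $\calo(k)$ with $c_1(F)=k$. Since $E$ is locally free, the natural map $F^{\vee\vee}\to E^{\vee\vee}=E$ extends the inclusion $F\into E$ and is itself injective, as its kernel would be a torsion subsheaf inside the reflexive $F^{\vee\vee}$. Therefore $E$ is $\mu$-stable (resp.\ $\mu$-semistable) if and only if every line bundle inclusion $\calo(k)\into E$ satisfies $k<c_1/2$ (resp.\ $k\le c_1/2$).

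For the necessity direction, the given section $\calo\into E$ forces $0<c_1/2$ (resp.\ $0\le c_1/2$), yielding condition (1). If $Y$ were contained in a surface $S$ of degree $s\le c_1/2$ (resp.\ $s<c_1/2$), then $\cali_S(c_1)=\calo(c_1-s)\into\cali_Y(c_1)$. Applying $\Hom(\calo(c_1-s),-)$ to the defining sequence, the obstruction to lifting this inclusion to a morphism $\calo(c_1-s)\to E$ lies in $\Ext^1(\calo(c_1-s),\calo)=H^1(\p3,\calo(s-c_1))=0$. The lift is automatically injective, since its composition with $E\onto\cali_Y(c_1)$ is an inclusion, so it produces a line bundle subsheaf of slope $c_1-s\ge c_1/2$ (resp.\ $>c_1/2$), violating the assumed (semi)stability.

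Conversely, assume (1) and (2), and suppose for contradiction that $\calo(k)\into E$ destabilizes $E$, so $k\ge c_1/2$ (resp.\ $k>c_1/2$). Compose with $E\onto\cali_Y(c_1)$. If the composition is nonzero, it defines a section of $\cali_Y(c_1-k)$, exhibiting $Y$ inside a surface of degree $c_1-k\le c_1/2$ (resp.\ $<c_1/2$) and contradicting (2). If the composition is zero, then $\calo(k)\to E$ factors through the given section $\calo\into E$, yielding a nonzero morphism $\calo(k)\to\calo$; this forces $k\le 0$, contradicting $k\ge c_1/2>0$ (resp.\ $k>c_1/2\ge 0$, where the borderline case $c_1=k=0$ is excluded because the inclusion would then coincide with the given section rather than properly destabilizing). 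No deep technical obstacle arises; the main care required is the parallel bookkeeping of strict versus non-strict inequalities distinguishing stability from semistability, together with the observation that the $H^1$-vanishing on $\p3$ is what makes the lifting step (and hence the entire equivalence) work so cleanly.
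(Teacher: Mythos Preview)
The paper does not supply its own proof of this proposition; it is quoted verbatim from Hartshorne's 1978 paper and used as background for the generalization in Proposition~\ref{criterion:stab}. So there is no ``paper's proof'' to compare against.

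Your argument is correct and is essentially Hartshorne's original proof: reduce to line-bundle subsheaves via reflexive hulls, then use the short exact sequence $0\to\calo\to E\to\cali_Y(c_1)\to 0$ together with $H^1(\p3,\calo(n))=0$ to pass between inclusions $\calo(k)\into E$ and hypersurfaces of degree $c_1-k$ containing $Y$. Two minor remarks. First, the parenthetical about the borderline case $c_1=k=0$ is unnecessary: in the semistable analysis you have already assumed $k>c_1/2$, so $k>0$ whenever $c_1\ge 0$, and the factoring $\calo(k)\to\calo$ immediately gives a contradiction. Second, when you say a nonzero composition $\calo(k)\to\cali_Y(c_1)$ exhibits $Y$ in a surface of degree $c_1-k$, you should note that if $c_1-k\le 0$ the section space $H^0(\cali_Y(c_1-k))$ vanishes (since $Y$ is a nonempty curve), so the composition is automatically zero and you are in the other branch of your dichotomy; this is implicit in your case split but worth stating.
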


On a smooth polarized threefold $(X, \calo_X(1))$ with Picard rank $1$, assume that for some $k\in \mathbb{Z}_{\geq 1}$, a coherent pair $(E(k), s)$ of class $v(k)$ corresponds to a one-dimensional scheme $Y\subset X$.
Then, we have the following result.

\begin{proposition}{\label{criterion:stab}}
$E$ is Gieseker semistable if $Y$ does not fit diagram (\ref{descriptionY}) for all possible one-dimensional schemes $A, C$ and zero-dimensional scheme $B_0$ such that
$B_0, C\subset S$ and $S\subset X$ is a hyper-surface of degree $k-l+c_1$.
\end{proposition}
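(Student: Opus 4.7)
The plan is to prove the contrapositive: if $E$ is Gieseker unstable, then $Y$ fits Diagram (\ref{descriptionY}) for some choice of data. Starting from any destabilizing subsheaf of $E$, I would pass to its saturation $F \subset E$, so that $F$ has rank $1$, $E/F$ is torsion-free, and $p_F(t) > p_E(t)$. Writing $F(k) \cong \cali_A(k+l)$ and $(E/F)(k) \cong \cali_B(a)$ with $a := k - l + c_1$, for some integer $l$ and subschemes $A, B$ of codimension at least two, supplies the building blocks of a wall-type short exact sequence.

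The crucial technical step is to show that the section $s$ does \emph{not} factor through the inclusion $\cali_A(k+l) \hookrightarrow E(k)$. Suppose for contradiction that $s = i \circ s'$ with $s': \calo_X \to \cali_A(k+l)$. The snake lemma applied to the resulting commutative square produces a short exact sequence
\[
0 \to \coker(s') \to \cali_Y(2k+c_1) \to \cali_B(a) \to 0.
\]
Because the saturated assumption forces $\cali_Y(2k+c_1)$ to be torsion-free, $\coker(s')$ must vanish, so $s'$ is an isomorphism, forcing $A = \emptyset$ and $k + l = 0$. Then $F \cong \calo_X(-k)$ has slope $\mu(F) = -k$; together with the normalization $\mu(E) = c_1/2 \in \{-1/2, 0\}$ and $k \geq 1$, this contradicts $p_F(t) > p_E(t)$.

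Consequently, $s$ induces a nonzero section $s''$ on the quotient $(E/F)(k) = \cali_B(a)$, yielding the short exact sequence of pairs
\[
0 \to (\cali_A(k+l), 0) \to (E(k), s) \to (\cali_B(a), 1) \to 0,
\]
which is exactly Sequence (\ref{wall1}). Running the construction from Section~4.2 attached to such a sequence then produces Diagram (\ref{diagrampair}), and in turn Diagram (\ref{descriptionY}) describing $Y$ in terms of $A$, $C$, $B_0$, and a hypersurface $S \subset X$ of degree $a = k - l + c_1$. This places $Y$ in the locus explicitly excluded by the hypothesis, establishing the contrapositive.

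The main obstacle is the factoring step: ruling out that $s$ lifts to $F(k)$ genuinely uses both the saturated assumption (to ensure $\coker(s)$ is torsion-free) and the normalization convention (to exclude the degenerate case $F \cong \calo_X(-k)$). Once this step is settled, the remainder is a direct invocation of the diagram chase already developed in the preceding subsection.
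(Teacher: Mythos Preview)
Your proof is correct and takes a somewhat different route from the paper's. The paper argues via the wall-crossing framework: since $(E(k),s)$ is saturated it lies in $\cals_{W_\emptyset^-}$; if $Y$ never fits Diagram~(\ref{descriptionY}) then the pair cannot land in any $\cals_W^+$ as $\delta$ decreases, so it survives into $\cals_{0^+}$, and Proposition~\ref{prop:stability} finishes. Your argument bypasses the chamber structure entirely and works directly with sheaves: starting from a Gieseker destabilizer, you saturate to obtain $F\subset E$ with torsion-free quotient, and then produce Sequence~(\ref{wall1}) and hence Diagram~(\ref{descriptionY}) by hand.

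The substantive new ingredient in your approach is the factoring lemma---showing that $s$ cannot lift to $F(k)$. Your snake-lemma argument is clean: $\coker(s')$ injects into the torsion-free sheaf $\cali_Y(2k+c_1)$ and has rank zero, hence vanishes, forcing $F\cong\calo_X(-k)$ and contradicting $p_F>p_E$ via the slope inequality. The paper never isolates this step because the wall-crossing picture encodes it implicitly (a pair in $\cals_W^+$ automatically has the section on the quotient side). What your approach buys is self-containment: it does not rely on knowing that the saturated pair is $\delta$-stable near $W_\emptyset$, nor on the finiteness of walls, nor on the identification of $\cals_W^+$ with extensions of the flipped type. What the paper's approach buys is brevity, once that machinery is in place.

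One minor point worth noting: your argument also implicitly verifies the range $0\le l\le k+c_1$ required in Sequence~(\ref{wall1}). The lower bound comes from $\mu(F)\ge\mu(E)=c_1/2$, and the upper bound from $a\ge 0$ (forced by $h^0(\cali_B(a))\ne 0$). The boundary case $a=0$ degenerates (then $B=\emptyset$, $S$ is empty, and $Y=A$), but this edge case is already ambiguous in the proposition's statement itself and is not a defect of your argument.
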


\begin{proof}
Since the pair $(E(k), s)$ corresponds to a one-dimensional scheme $Y$, it must lie in the moduli space $\cals_{W_{\emptyset}^-}$. 
$Y$ does not fit into diagram (\ref{descriptionY}), so $(E(k), s)$ does not hit any wall as $\delta$ varies towards $0$. 
Therefore, $(E(k), s)\in \cals_{0^+}$, which implies that $E(k)$ is Gieseker semistable, and hence $E$ is Gieseker semistable. 
\end{proof}

\begin{remark}
The above criterion (Proposition \ref{criterion:stab}) may seem complicated to verify in general. 
In practice, we always try to choose the smallest $k\in \mathbb{Z}_{>0}$ such that $H^0(X, E(k))\neq 0$. This significantly limits the possible choices for $A$, $C$, and $B_0$, making the criterion more manageable. 
\end{remark}

\begin{remark}
The converse of Proposition \ref{criterion:stab} is not true in general because a one-dimensional scheme satisfying diagram (\ref{descriptionY}) does not always recover a wall (see Remark \ref{curvetopair}).
\end{remark}

\subsection{Bad pairs for wall-crossings}
As we mentioned at the end of Section \ref{subsec:critical}, there are possibly some “bad pairs" that are only “temporarily stable" as $\delta$ varies. We describe those kinds of pairs here.

\begin{definition}
    We say that a pair $(E(k),s)$ is \textit{bad} if it is only stable for $\delta\in (W_1, W_2)$, where $0<W_1<W_2<W_{\emptyset}$. 
\end{definition}

\begin{remark}
It follows from Theorem \ref{Gie-Hilb} that a bad pair $(E(k),s)$ is unsaturated, meaning that it does not correspond to a one-dimensional scheme.
Also, the underlying sheaf $E(k)$ in a bad pair is Gieseker unstable.
\end{remark}

The next two results indicate that a bad pair is created and eliminated at two walls in the same group, and the set of bad pairs is relatively small.

\begin{lemma}
    If a bad pair $(E(k),s)$ is $\delta$-stable for $\delta\in(W_1, W_2)$ for some $W_1$, $W_2$ satisfying $0<W_1<W_2<W_{\emptyset}$, then the walls $W_1$ and $W_2$ lie in the same group in the sense of Definition \ref{wall group}.
\end{lemma}

\begin{proof}
Without loss of generality, we assume that $c_1(v)=0$. The proof for $c_1=-1$ is similar.

Suppose $(E(k),s)\in \cals_{W}^+$ is a bad pair created at a wall $W$ defined by the sequence
\[
0\lra (\cali_A(2k-e), 0)\lra (E(k),s)\lra (\cali_B(e), 1)\lra 0,
\]
in which $A,B\subset \p3$ are one-dimensional schemes and $1\leq e\leq k$. 
Then, the cokernel $Q$ of $s$ is not torsion-free. Let $T\hookrightarrow Q$ be the maximum torsion sub-sheaf of $Q$. We have the following commutative diagram
\begin{center}
\begin{tikzcd}
    & \op3 \arrow[r]\arrow[d, "s"]&\op3\arrow[d, "S"]\\
    \cali_A(2k-e)\arrow[r]\arrow[d]&E(k)\arrow[r]\arrow[d]&\cali_B(e)\arrow[d]\\
    \cali_A(2k-e)\arrow[r]&Q\arrow[r]&\cali_{B|S}(e),\\
\end{tikzcd}
\end{center}
in which $S\subset \p3$ is a hyper-surface of degree $e$. The bottom row of the diagram  implies the diagram below ($\phi$ is injective since $Q/T$ is torsion-free of rank $1$)
\begin{center}
    \begin{tikzcd}
    & T \arrow[r]\arrow[d]&T\simeq \cali_{B'|S}(e)\arrow[d, "\phi"]\\
    \cali_A(2k-e)\arrow[r]\arrow[d]&Q\arrow[r]\arrow[d]&\cali_{B|S}(e)\arrow[d]\\
    \cali_A(2k-e)\arrow[r, "\phi"]&Q/T\arrow[r]&F,\\       \end{tikzcd}
\end{center}
where $T$ must be isomorphic to $\cali_{B'|S}(e)$ for some scheme $B'$ such that $B\subset B'\subset S$. The quotient sheaf $F\simeq\cali_{B|S}(e)/T$ is supported on the one-dimensional scheme $B'$.  
Hence, the torsion-free sheaf $Q/T$ has $c_1(Q/T)=c_1(\cali_A(2k-e))=2k-e$, and $Q/T\simeq \cali_{C}(2k-e)$ for some one-dimensional scheme $C\subset \p3$. Consider the composition $E\onto Q\onto Q/T$ and let $K$ be its kernel. 
$K$ is torsion-free, and one computes directly that $\ch(K)=\ch(\calo_{\p3})+\ch(\cali_{B'}(e))$, so $K\simeq \cali_{B'}(e)$.
Diagram (\ref{diag:saturation}) implies that 
$(E(k),s)$ is destabilized when crossing the following wall (call it $W'$)
\[
W': \quad 0\lra (\cali_{B'}(e), s')\lra (E(k),s)\lra (\cali_C(2k-e), 0)\lra 0.
\]

Therefore, the bad pair $(E(k),s)$ is stable for $\delta$ between $W$ and $W'$, and both of these two walls are in group $e$. 
\end{proof}

\begin{proposition}{\label{bad pair}}
    Any bad pair $(E(k),s)$ lies in the boundary of $\cals^+_{W}$ for some wall $W\in (0, W_{\emptyset})$, hence the set of bad pairs is never a component in the moduli space ${\cals}^{\delta}_X(P)$ for any $\delta\in (0, W_{\emptyset})$ that is not a critical value.
\end{proposition}

\begin{proof}
    Assume that a bad pair $(E(k),s)\in \cals^+_{W}$ is created at a wall $W$ defined by the following short exact sequence of pairs 
    $$0\lra (\cali_A(l), 0)\lra (E,s)\lra (\cali_B(k),s') \lra 0,$$
    in which $A, B\subset X$ are one-dimensional schemes, and $k,l\in \mathbb{Z}_+$. Let $Q$ be the cokernel of the section $s$ for $E(k)$, and we have the following diagram
\begin{center}
\begin{tikzcd}
        &\calo_X\arrow[r, "="]\arrow[d, "s"]&\calo_X\arrow[d, "s'"]\\
\cali_A(l)\arrow[r]\arrow[d]&E\arrow[r]\arrow[d]&\cali_B(k)\arrow[d]\\
\cali_A(l)\arrow[r]&Q\arrow[r]&\cali_{B|S}(k).\\
\end{tikzcd}
\end{center}
In the diagram, $S\subset X$ is a surface of degree $k$. Since $(E(k),s)$ is unsaturated, $Q$ contains a maximum torsion sub-sheaf $T$, and $T$ must be a sub-sheaf of $\cali_{B|S}(k)$. Therefore, we have the following diagram
\begin{center}
    \begin{tikzcd}
    &T\arrow[r]\arrow[d]&T\arrow[d]\\
\cali_A(l)\arrow[r]\arrow[d]&Q\arrow[r]\arrow[d]&\cali_{B|S}(k)\arrow[d]\\
\cali_A(l)\arrow[r]&Q/T\arrow[r]&Q_1.\\
    \end{tikzcd}
\end{center}
Here, $Q_1$ is a one-dimensional scheme whose support is contained in $S$. Note that $Q/T$ is a torsion-free sheaf of rank $1$, hence $Q/T\simeq \cali_Y(m)$ for some one-dimensional scheme $Y\subset X$ and $m\in \mathbb{Z}$. We must have $m=l$ since $c_1(Q_1)=0$. Hence, we have the short exact sequence 
$$0\lra Q_1\lra \calo_A(l) \lra \calo_Y(l) \lra 0.$$
This implies that the support of $Q_1$ is also contained in $A$.

Therefore, if we deform $A$ into $A'$ such that no component of $A'$ is contained in $S$, then $Q$ will be deformed to $Q'$, which is torsion-free of rank 1. Consequently, the bad pair $(E(k),s)$ will be deformed into a saturated pair $(E'(k), s')$, and this proves the claim.
\end{proof}

\begin{corollary}{\label{rat'l map 2}}
    For any $\delta\in (0,W_{\emptyset})$ that is not a critical value and for every irreducible component $\mathcal{P}$ of $\mathcal{S}^{\delta}_X(P)$, there exists a rational map from $\mathcal{P}$ to either the Hilbert scheme of one-dimensional schemes or the moduli space of Gieseker semistable sheaves. 
\end{corollary}


\section{Wall-crossings at critical values defined by 0-dimensional schemes} \label{sec:critical}

In this section, we work on $\p3$ with a normalized rank $2$ class $v$. 
We study a distinguished group of wall-crossings
\[
W_{l_{v(k)}}<\cdots <W_1<W_0,
\]
defined by one-dimensional schemes, which occur before the collapsing wall. These walls are associated with a twisted class $v(k)$ for an appropriate $k\in \mathbb{Z}_+$.

The goal of this section is as follows:
\begin{enumerate}
    \item Relate the wall-crossings at the distinguished walls $W_i$ to a stratification of the Hilbert scheme of curves. 
    \item Describe the birational transformations of a moduli space component when crossing these walls, and prove that those wall-crossings preserve the connectedness of the moduli space.
\end{enumerate}
Recall that for $\delta \in (W_T, W_{\emptyset})$, there exists a morphism from the moduli space of semistable pairs to the Hilbert scheme. 
We will start from this chamber and then cross walls successively as $\delta$ decreases towards $0$. 

\subsection{A sequence of walls defined by 0-dimensional schemes} \label{subsec:wall0dim}

We compute with the Chern classes $c_j$ ($j=0,1,2,3$) of $v$.
We have shown in Section \ref{subsec:wallxph} that the second largest wall for $v(k)$ is (compare with sequence  (\ref{secondwall}))
\begin{equation}{\label{tor-free wall}}
W_0: \quad 0\lra (\cali_A(1), 1)\lra (E(k),s)\lra (\calo_{\p3}(2k-1+c_1), 0)\lra 0.
\end{equation}
The subsequent walls are defined inductively for $i=1,2,..., l_{v(k)}$ as follows 
\begin{equation}{\label{0-dim wall}}
W_i: \quad 0\lra (\cali_{A_i}(1), 1)\lra (E(k),s)\lra (\cali_{P_i}(2k-1+c_1), 0)\lra 0,
\end{equation}
where each $A_i\subset \p3$ is a planar one-dimensional scheme, and $P_i\subset \p3$ is zero-dimensional with $|P_i|=i$.
The integer $l_{v(k)}$, which depends on the class $v(k)$, counts the number of these walls; it will be computed in Remark \ref{0-dim walls}. 
Note that $A_{l_{v(k)}}$ is pure one-dimensional, and they are strictly ordered $W_i<W_{i-1}$ for $i=1,2,3,...,l_{v(k)}$.
These walls are in group $1$ (Definition \ref{Groups of walls}) as shown in the picture below.

\begin{figure}[ht]
\begin{tikzpicture}
\draw[thick] (-0.5,0)--(11.9,0);
\draw (11,-0.2) -- (11, 0.2);
\node at (11, -0.5){$W_{\emptyset}$};
\draw (10,-0.2) -- (10, 0.2);
\node at (10, -0.5){$W_{0}$};
\draw (9,-0.2) -- (9, 0.2);
\node at (9, -0.5){$W_{1}$};
\draw (8,-0.2) -- (8, 0.2);
\node at (8, -0.5){$W_{2}$};
\node at (6, -0.5){$\cdots$};
\draw (4,-0.2) -- (4, 0.2);
\node at (4, -0.5){$W_{l_{v(k)}}$};
\node at (0.5, 0.5){$\mathbb{Q}[t]-$axis for $\delta$};
\node at (0, -0.5){$0$};
\draw (0,-0.2) -- (0, 0.2);
\draw [decorate,decoration={brace,amplitude=5pt,mirror,raise=5ex}]
  (3.5,0) -- (10.2,0) node[midway,yshift=-4em]{walls in group $1$};
\end{tikzpicture}
\end{figure}

\begin{remark}{\label{0-dim walls}}
The number $l_{v(k)}$ equals the length of the zero-dimensional subscheme contained in the planar curve $A_0$ from (\ref{0-dim wall}). 
Indeed, $A_{l_{v(k)}}$ is pure one-dimensional, and as the index decreases from $A_{l_{v(k)}}$ to $0$, each step adds one point to the curve until $A_0$ attains the maximal possible number of points.

A direct computation shows that:
\[
\begin{array}{rl}
P_{\calo_{A_0}}(t)=&P_{\calo_{\p3}}(t)+P_{\calo_{\p3}(2k-2+c_1)}(t)-P_{E(k-1)}(t)  \\
     =& (k^2+(c_1-2)k+3c_1-c_2+\frac{25}{3})t\\
     &+k^3+(\frac{3}{2}c_1-1)k^2+(\frac{19}{3}+3c_1-c_2)k\\
     &+\frac{1}{2}c_1c_2-\frac{1}{2}c_3-\frac{9}{2}c_1+3c_2-\frac{1}{2}c_1^2-\frac{19}{3}.
\end{array}
\]
Write $P_{\calo_{A_0}}(t)=d_{A_0}t+\chi_{A_0}$. Because ${A_0}$ is planar, the length of the maximum zero-dimensional subscheme in ${A_0}$ is 
$$\chi_A-1+\binom{d_A-1}{2},$$ 
which depends only on $k$ and the Chern class of $v$.
\end{remark}

The next Proposition shows that under a mild degree condition, the wall $W_0$ appears defined by a planar one-dimensional scheme $A\subset \p3$. 



\begin{proposition}{\label{1stwall}}
Suppose there exists a saturated pair $(E(k), s)$ corresponding to a one-dimensional scheme $Y$ (Definition \ref{defn:sat}). 
Let $d_Y$ be the degree of the maximum one-dimensional subscheme of $Y\subset \p3$. 
If 
$$d_Y-2k+1-c_1>0,$$
then there exists a one-dimensional planar scheme $A\subset \p3$ such that the flipped sequence
$$ 0\lra (\mathcal{O}_{\mathbb{P}^3}(2k-1+c_1), 0)\lra (E(k),s)\lra (\mathcal{I}_A(1), 1)\lra 0 $$
admits non-trivial extensions. 
Moreover, $Y$ is a pure one-dimensional planar curve if and only if $A$ is.
\end{proposition}

\begin{proof}
Suppose such an $A$ exists, then we have the following diagram
\[
\begin{tikzcd}
{}&{}&\mathcal{O}_{\mathbb{P}^3}\arrow{r}\arrow{d}&\mathcal{O}_{\mathbb{P}^3}\arrow{d}&\\
0\arrow{r}&\mathcal{O}_{\mathbb{P}^3}(2k-1+c_1)\arrow{r}&E(k)\arrow{r}\arrow{d}&\mathcal{I}_A(1)\arrow{r}&0\\
&&\mathcal{I}_Y(2k+c_1).&\\
\end{tikzcd}
\]
Let $P_{\mathcal{O}_Y}(t)=d_Yt+\chi_Y$ be the Hilbert polynomial of $Y$.
The Hilbert polynomial of $\mathcal{O}_A$ is as follows
\[
\begin{array}{rl}
P_{\mathcal{O}_A}(t)=&P_{\mathcal{O}_Y(2k-1+c_1)}(t)+P_{\mathcal{O}_{\mathbb{P}^3}}(t)-P_{\mathcal{O}_{\mathbb{P}^3}(-1)}(t)\\
&+P_{\mathcal{O}_{\mathbb{P}^3}(2k-2+c_1)}(t)-P_{\mathcal{O}_{\mathbb{P}^3}(2k-c_1)}(t)\\
=&(d_Y-2k+1-c_1)t+d_Y(2k-1+c_1)+\chi_Y\\
&-\frac{1}{2}(2k+1+c_1)(2k+c_1)+1.
\end{array}
\]

Define $d_A:=d_Y-2k+1-c_1$ and
$$ \chi_A:=d_Y(2k-1+c_1)+\chi_Y-\frac{1}{2}(2k+1+c_1)(2k+c_1)+1, $$
so that $P_{\mathcal{O}_A}(t)=d_At+\chi_A$.
We must have $d_A\geq 1$ since otherwise $A$ will be zero-dimensional, and 
$$\Ext^1(\cali_{A}(1), \calo_{\p3}(2k-1+c_1))\simeq \Ext^2(\calo_{\p3}(2k-1+c_1), \cali_{A}(1-4))^*=0,$$ 
leading to a contradiction.

Conversely, suppose $d_A:=d_Y-2k+1-c_1\geq 1$, we first show that there exists such a curve $A$ whose degree and Euler characteristic are $d_A$ and $\chi_A$ respectively. We prove the inequality $\chi_A\geq 1-\binom{d_A-1}{2}$ since the right-hand side is the minimum for the Euler characteristic of a degree $d_A$ curve. 

By definition, $\chi_A\geq 1-\binom{d_A-1}{2}$ can be rewritten as 
$$ d_Y(2k-1+c_1)+\chi_Y-\frac{1}{2}(2k+1+c_1)(2k+c_1)+1 \geq 1- \binom{d_Y-2k+1-c_1-1}{2}. $$
The right-hand side is expanded as 
$$1-\frac{1}{2}(2k+1+c_1)(2k+c_1) + d_Y(2k-1+c_1)-\frac{1}{2}d_Y^2+\frac{3}{2}d_Y.$$
Compare it with the left-hand side, and the inequality is reduced to  
$$ \chi_Y\geq -\frac{1}{2}d_Y^2+\frac{3}{2}d_Y=1-\binom{d_Y-1}{2}, $$
which clearly holds.
In particular, $\chi_Y=1-\binom{d_Y-1}{2}$ if and only if \linebreak $\chi_A=1-\binom{d_A-1}{2}$.
This implies that $A$ is a pure one-dimensional planar scheme if and only if $Y$ is.

Next, we show that $\Ext^1(\cali_{A}(1), \calo_{\p3}(l))>0$ for some one-dimensional scheme $A$ with the given $d_A$ and $\chi_A$. 
This will imply the existence of a non-trivial extension $E(k)$, and hence $(E(k),s)$ since $H^1(\mathbb{P}^3, \mathcal{O}_{\p3}(2k-1+c_1))=0$.
Moreover, the cokernel of $s$ in $(E(k), s)$ must be $\cali_{Y}(2k+c_1)$ since $\delta\in (W_0, W_{\emptyset})$. 

A simple computation shows that

\begin{align*}
    \Ext^1(\cali_{A}(1), \calo_{\p3}(l)) &\simeq \Ext^2(\calo_{\p3}(l)), \cali_{A}(1))^* = H^2(\p3, \cali_A(-3-l))^* \\
    &\simeq H^1(\p3, \calo_A(-3-l))^* = H^1(A, \calo_A(-3-l))^*. 
\end{align*}
The latter cohomology is independent of the zero-dimensional subschemes of $A$, so we may assume that $A\subset H$ is pure of degree $d_A$. 
Take the long exact sequence of cohomologies of the short exact sequence
\[
0\lra \calo_H(-d_A-3-l) \lra \calo_H(-3-l)\lra \calo_A(-3-l) \lra 0, 
\]
and we obtain the short exact sequence
$$ 0 \to H^1(H, \calo_A(-3-l)) \to H^2(H, \calo_H(-d_A-3-l)) \to H^2(H, \calo_H(-3-l)) \to 0 $$
since $ H^1(H, \calo_H(-3-l))=H^2(H, \calo_A(-3-l))=0$.

Now, we see that $h^1(H, \calo_A(-3-l))>0$ because $d_A\geq 1$ and 
$$ h^2(H, \calo_H(-d_A-3-l)) > h^2(H, \calo_H(-3-l)). $$
\end{proof}


\begin{remark}
The wall $W_0$ is the same as the wall $W_T$ defined in Section \ref{sec:pairs} if it exists. 
\end{remark}


\subsection{Stratification in $\calh_{\p3}(P)$ for zero-dimensional walls} 

We continue studying the curves created by $W_i$. 
Diagram (\ref{descriptionY}) implies that a stable pair $(E(k),s)\in \cals_{W_i}^+$ corresponds to a one-dimensional scheme $Y_i$ that fits into the sequence 
\[
0\lra \calo_{P_i}\lra \calo_{{Y_i}}\lra \calo_{C_i}\lra 0.
\]
In the above sequence, $P_i$ is a zero-dimensional scheme of length $i$; $C_i\subset H$ is a planar one-dimensional scheme, and it is the union of a zero-dimensional scheme $Q_i\subset H$ and a pure one-dimensional scheme $C_i'\subset H$ of degree $2k+c_1-1+\deg(A_i)$.  
For any $i<j$, the difference between $Y_i$ and $Y_j$ is the position of some zero-dimensional subschemes.

Let $P$ be the Hilbert polynomial of $\calo_{Y_i}$, and denote by $\calh_{pl}$ the components in $\mathcal{H}_{\p3}(P)$ parameterizing such one-dimensional schemes $Y_i$. To be more precise, we have the following definition.

\begin{definition}
Let $\calh_{pl}\subset \calh_{\p3}(P)$ be the component(s) that parameterize one-dimensional schemes $C\subset \p3$ whose maximum one-dimensional subscheme is planar.
\end{definition}

In this paper, we only consider the case that $\calh_{pl}$ is irreducible, i.e., any one-dimensional scheme $Y\in \calh_{pl}$ can be deformed to the union of a pure one-dimensional scheme and isolated points.
In this case, the subscheme of $\mathcal{S}_{W^-_{\emptyset}}$ lying over $\mathcal{H}_{pl}$, denoted by $\tilde{\mathcal{H}}_{pl}$, is irreducible.
We provide a proof below. 

\begin{lemma}{\label{irreducibility}}
    Let $\mathcal{H}_{pl}\subset \mathcal{H}_{\p3}(\tilde{P})$ be the subscheme parameterizing curves $Y$, whose maximum one-dimensional subscheme is planar. 
    Assume $\mathcal{H}_{pl}$ is irreducible, then the subscheme in $\mathcal{S}_{W^-_{\emptyset}}$ lying over $\mathcal{H}_{pl}$ is irreducible, and denote this subscheme by $\tilde{\mathcal{H}}_{pl}$.  
\end{lemma}
\begin{proof}
    We first prove that $ext^1(\mathcal{I}_Y(2k+c_1), \mathcal{O}_{\p3})$ is a constant for every $Y\in \mathcal{H}_{pl}$. 
    Serre duality implies that $\Ext^1(\mathcal{I}_Y(2k+c_1), \mathcal{O}_{\p3})\cong \Ext^2(\mathcal{O}_{\p3}, \mathcal{I}_Y(2k+c_1-4))^*$; the latter coincides with $H^2(\p3, \mathcal{I}_Y(2k+c_1-4))^*\cong H^1(\p3, \mathcal{O}_Y(2k+c_1-4))^*.$ 
    Let $d$ be the degree of the one-dimensional subscheme of $Y$. Notice that the term $H^1(\p3, \mathcal{O}_Y(2k+c_1-4))^*$ is independent of the zero-dimensional subscheme of $Y$, and we have the following short exact sequence
    \begin{align*}
    0\to H^1(\p3, \mathcal{O}_Y(2k+c_1-4))&\to H^2(\p2, \mathcal{O}_{\p2}(2k+c_1-d-4))\\
    &\to H^2(\p2, \mathcal{O}_{\p2}(2k+c_1-4))\to 0.
    \end{align*}
    One sees that $h^1(\p3, \mathcal{O}_Y(2k+c_1-4))$ only depends on the fixed numbers $k, c_1$ and $d$, hence a constant for $Y\in \mathcal{H}_{pl}$.

    Next, let 
    $\tilde{\mathcal{I}}\subset \p3\times \mathcal{H}_{pl}$ be the universal ideal sheaf, and $p: \p3\times \mathcal{H}_{pl}\to \p3$, $q: \p3\times \mathcal{H}_{pl}\to \mathcal{H}_{pl}$ are the two projections. 
    The sheaf 
    $$ \mathcal{E}xt^1_{q}(\tilde{\mathcal{I}}(p^*(\mathcal{O}_{\p3}(2k+c_1))), \mathcal{O}_{\p3\times \mathcal{H}_{pl}}) $$
    is locally free on $\mathcal{H}_{pl}$ since $R\mathcal{H}om(\tilde{\mathcal{I}}(p^*(\mathcal{O}_{\p3}(2k+c_1))), \mathcal{O}_{\p3\times \mathcal{H}_{pl}})$ is a perfect complex over $\mathcal{H}_{pl}$, and $ext^1(\mathcal{I}_Y(2k+c_1), \mathcal{O}_{\p3})$ is a constant. 
    We have that $\mathbb{P}(\mathcal{E}xt^1_{q}(\tilde{\mathcal{I}}(p^*(\mathcal{O}_{\p3}(2k+c_1))), \mathcal{O}_{\p3\times \mathcal{H}_{pl}}))\subset \mathcal{H}_{\p3}(\tilde{P})$ is the subscheme lying over $\mathcal{H}_{pl}$, and it is irreducible. 
\end{proof}

We introduce a stratification $\{\calz_i\}_{i=0, ..., l_{v(k)}}$ of $\calh_{pl}$ in the next definition.
These are the (locally) closed subschemes in $\calh_{pl}$ perturbed when crossing $W_i$.
\begin{definition}{\label{locusZi}}
Let $i$ be an integer that takes values in $\{0, 1,..., l_{v(k)}\}$.
\begin{enumerate}
    \item Define $\mathcal{X}_i\subset \calh_{pl}$ to be the subset parameterizing one-dimensional schemes $C_i=C'_i\cup P_i$, in which $C_i'\subset H$ is planar contained in a plane $H\subset \p3$ and $P_i$ is zero-dimensional with $|P_i|=i$ and $P_i\cap H=\emptyset$.

    \item Define $\mathcal{Z}_0:=\mathcal{X}_0$, and inductively, $\mathcal{Z}_i:=\bar{X}_i\backslash \bar{X}_{i-1}$.

    \item Define $V_i\subset \calh_{\p3}(P)$ to be the complement of $\mathcal{X}_i$ in $\mathcal{Z}_i$. 
\end{enumerate} 
\end{definition}

\begin{remark}
$\mathcal{X}_i\subset \calh_{pl}$ parameterizes curves that are the union of planar curves and $i$ points out of the plane.
$\calz_i\subset \calh_{pl}$ parameterizes curves that are the union of a planar curve and $i$ points not in the same plane, including the case of $\leq i$ embedded points pointing out of the plane. 
In addition, $\calz_0\subset \calh_{pl}$ is closed and $\calz_i\subset \calh_{pl}$ is locally closed for $i>0$.
\end{remark}

\begin{definition}{\label{locustilde}}
Define $\tilde{\calh}_{pl}\subset \cals_{W_{\emptyset}^-}$ to be the subschemes lying over $\calh_{pl}$. Lemma \ref{irreducibility} implies that it is irreducible. 
Similarly, define $\tilde{\mathcal{Z}}_i$, $\tilde{\mathcal{X}_i}$ and $\tilde{V_i}$ to be the subschemes of $\tilde{\calh}_{pl}$ lying over  $\mathcal{Z}_i$, $\mathcal{X}_i$ and $V_i$ respectively
for $i=0,1,2,...,l_{v(k)}$. 
\end{definition}

Clearly, we have that $\dim(\mathcal{Z}_{i+1})=\dim(\mathcal{Z}_{i})+1$, $\dim(\mathcal{Z}_{l_{v(k)}})=\dim(\calh_{pl})$, and $\mathcal{Z}_i$ lies in the boundary of $\mathcal{Z}_{i+1}$.


\begin{lemma}{\label{surjtocurve}}
    The (rational) map $\cals^+_{W_i} \dashrightarrow  \mathcal{Z}_i$ that sends a saturated pair $(E(k),s)$ to its corresponding one-dimensional scheme $Y$ is surjective. 
\end{lemma}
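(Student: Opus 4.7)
The plan is to first verify that the map lands in $Z_i$ and then prove surjectivity in two stages: a direct construction on the dense open stratum $X_i\subset Z_i$, followed by a limit argument for the boundary $Z_i\setminus X_i$. For the image containment, Diagram~(\ref{descriptionY}) applied to the defining sequence of $W_i$ shows that any saturated pair $(E(k),s)\in\cals^+_{W_i}$ has associated curve $Y\in\bar X_i$; the ``once removed, never returns'' principle (noted in Section~2.3, just after Theorem~\ref{Thm:finite}) then excludes $Y\in\bar X_{i-1}$, since such a pair would already have been destabilized at an earlier wall $W_j$ with $j<i$. Hence the image lies in $Z_i$.

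For surjectivity on $X_i$, I would fix $Y=Y_1\cup Q_i\in X_i$ with $Y_1\subset H$ in a plane and $Q_i$ a $0$-dim scheme of length $i$ disjoint from $H$. Using $H\cap Q_i=\emptyset$, I would tensor the standard sequence $0\to\cali_H\to\cali_{Y_1}\to\cali_{Y_1|H}\to 0$ with $\cali_{Q_i}$ and verify that the relevant Tor vanishes (disjointness of supports), yielding
\[
0\lra \cali_{Q_i}(-1)\lra \cali_Y\lra\cali_{Y_1|H}\lra 0.
\]
After twisting by $\calo_{\p3}(2k+c_1)$ and setting $P_i:=Q_i$, this would realize $\cali_Y(2k+c_1)$ as an extension of $\cali_{Y_1|H}(2k+c_1)$ by $\cali_{P_i}(2k-1+c_1)$. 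Using the Chern-class constraint from $v_k$ (degrees satisfying $\deg Y_1-\deg A_i^{(1)}=2k-1+c_1$), I would choose a planar scheme $A_i\subset H$ (pure $1$-dim part a divisor of the correct degree, $0$-dim part matching that of $Y_1$) so that $\cali_{Y_1|H}(2k+c_1)\cong\cali_{A_i|H}(1)$ on $H$. Combining with the natural section sequence $0\to\calo_{\p3}\to\cali_{A_i}(1)\to\cali_{A_i|H}(1)\to 0$ via a pullback/fibered extension construction gives
\[
0\lra\cali_{P_i}(2k-1+c_1)\lra E(k)\lra\cali_{A_i}(1)\lra 0
\]
with an induced section $s:\calo_{\p3}\to E(k)$ whose cokernel is $\cali_Y(2k+c_1)$. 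By Lemma~\ref{LES} this data assembles into a non-trivial extension of pairs, producing $(E(k),s)$ mapping to $Y$. I would then check $\delta$-stability in the chamber $(W_i,W_{i-1})$: any destabilizer would sit on a wall strictly between them, and by the group-$1$ wall analysis in Section~5.1 this would force $Y\in\bar X_j$ for some $j<i$, contradicting $Y\in Z_i$.

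For $Y\in Z_i\setminus X_i$, I would approximate $Y$ by a family $\{Y_t\}_{t\in T}$ in $Z_i$ with $Y_t\in X_i$ for generic $t$, lift each $Y_t$ to $(E_t(k),s_t)\in\cals^+_{W_i}$ by the previous step, and use the projectivity of $\cals^{\delta}_{\p3}(P)$ (Theorem~\ref{moduliofpairs}) together with the openness of chamber-stability and saturatedness to extract a limit pair $(E_0(k),s_0)\in\cals^+_{W_i}$ mapping to $Y$. The main technical obstacle will be in the middle paragraph: the precise sheaf identification $\cali_{Y_1|H}(2k+c_1)\cong\cali_{A_i|H}(1)$ compatibly with Diagram~(\ref{descriptionY}), together with the verification that the resulting $\Ext^1$-class is non-zero (via a cohomology computation on $H\cong\p2$ that leverages the planarity of $A_i$ and $Y_1$) and that the produced pair remains $\delta$-stable throughout $(W_i,W_{i-1})$.
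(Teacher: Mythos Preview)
Your construction on the open stratum $X_i$ is essentially correct and close in spirit to the paper's argument: the pullback you describe is equivalent to the lifting the paper performs. However, your limit argument for $Y\in Z_i\setminus X_i$ has a genuine gap. You invoke ``openness of \dots\ saturatedness'' to justify that the limit pair $(E_0(k),s_0)$ lies in the domain of the map, but openness goes the wrong way here: it says saturated pairs form an open locus, so a limit of saturated pairs may well fail to be saturated. Indeed, Lemma~\ref{criterion:sat} shows explicitly that not every pair in $\cals^+_{W_i}$ is saturated for intermediate $i$. If the limit $(E_0(k),s_0)$ is unsaturated, the map to the Hilbert scheme is undefined at that point, and you cannot conclude it hits $Y$. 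Projectivity of $\cals^\delta_{\p3}(P)$ gives you a limit in the moduli space, but not in the saturated locus.

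The paper sidesteps this by giving a \emph{uniform} construction valid for every $Y\in Z_i$, with no case split between $X_i$ and its boundary. It first observes that any such $Y$ sits in a sequence
\[
0\lra \cali_{P_i}(2k-1+c_1)\lra \cali_Y(2k+c_1)\lra \cali_{A_i|H}(1)\lra 0,
\]
then chooses an arbitrary non-trivial extension $0\to\calo_{\p3}\to E(k)\to\cali_Y(2k+c_1)\to 0$. The key step is the vanishing $\Ext^1(\cali_{P_i}(2k-1+c_1),\calo_{\p3})=0$ (which holds because $P_i$ is zero-dimensional), allowing the inclusion $\cali_{P_i}(2k-1+c_1)\hookrightarrow\cali_Y(2k+c_1)$ to lift to $E(k)$. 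A direct computation of $\Ext^1(\cali_{A_i|H}(1),\calo_{\p3})$ then identifies the cokernel as $\cali_{A_i}(1)$, producing the $W_i^{\text{flip}}$-sequence. Since one \emph{starts} from the section with torsion-free cokernel $\cali_Y(2k+c_1)$, the resulting pair is saturated by construction, and no disjointness hypothesis on $P_i$ and $H$ is needed. Your approach can be salvaged simply by running your $X_i$-construction for arbitrary $Y\in Z_i$ once you have the displayed sequence above; the limit step is then unnecessary.
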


\begin{proof} 
Firstly, the map is rational since there may exist bad pairs in $\cals^+_{W_i}$.
Next, we prove that every one-dimensional scheme $Y\in \mathcal{Z}_i$ recovers a pair $(E(k),s)\in \cals^+_{W_i}$.

By diagram (\ref{diagrampair}), $Y\in \mathcal{Z}_i$ satisfies the following sequence
$$0\lra \cali_{P_i}(2k-1+c_1(E))\lra \cali_Y(2k+c_1)\lra \cali_{A_i|H}(1)  \lra 0.$$
Consider a non-trivial extension $E(k)$ in $0\to \calo_{\p3}\to E(k)\to \cali_Y(2k+c_1)\to 0$ for some $k>0$. These two sequences imply the following diagram
\begin{center}    
    \begin{tikzcd}
        &\calo_{\p3}\arrow[d]&\\
        &E(k)\arrow[d]&\\
        \cali_{P_i}(2k-1+c_1(E))\arrow[r, "i"]& \cali_Y(2k+c_1)\arrow[r]& \cali_{A_i|H}(1).\\
    \end{tikzcd}
\end{center}
Observe that the inclusion $i$ lifts to an inclusion $i': \cali_{P_i}(2k-1+c_1(E))\to E(k)$ because of the vanishing $\Ext^1(\cali_{P_1}(2k-1+c_1(E)), \calo_{\p3})=0$ and $E$ being torsion-free.
We complete the diagram as follows 
\begin{center}
    \begin{tikzcd}
&\calo_{\p3}\arrow[d]\arrow[r,"="]&\calo_{\p3}\arrow[d]\\
        \cali_{P_i}(2k-1+c_1(E))\arrow[r, "i'"]\arrow[d, "="]&E(k)\arrow[r]\arrow[d]&Q\arrow[d]\\
        \cali_{P_i}(2k-1+c_1(E))\arrow[r, "i"]& \cali_Y(2k+c_1)\arrow[r]& \cali_{A_i|H}(1).\\
    \end{tikzcd}
\end{center}

Lastly, we prove that $Q\simeq \cali_{A_i}(1)$.
It is easy to see that $Q\simeq \cali_{A_i}(1)$ is an extension of $\cali_{A_i|H}(1)$ by $\calo_{\p3}$. 
Moreover, 
\begin{align*}
\Ext^1(\cali_{A_1|H}(1), \calo_{\p3}) &\simeq \Ext^2(\calo_{\p3}, \cali_{A_i|H}(-3))^* =  H^2(H, \cali_{A_i|H}(-3))^* \\
&\simeq H^2(\cali_{A'_i|H}(-3-d_A))^* \simeq H^0(\calo_H(d_A))
\end{align*}
Let $A'\subset A_i$ and $A''\subset A_i$ denote the zero-dimensional and one-dimensional components of $A_i$, and $d_A$ be the degree of $A''$.
The computation shows that extensions $\{Q\}$ are one-to-one correspondence to $H^0(H, \calo_H(d_A))$. 
Therefore, $Q\simeq \cali_{A_i}(1)$, in which the one-dimensional part $A''$ corresponds to its class in $H^0(H, \calo_H(d_A))$, are all the extensions. This proves the claim.
\end{proof}

\begin{remark}{\label{dim Zi tilde}}
We provided in Section \ref{subsec:fibrations} two fibrations for $\cals^+_W$ (Figure \ref{Fibrations}). 
For the case $W=W_i$, the subscheme $\calh_W\subset \calh_{\p3}(P)$ is $\mathcal{Z}_i$, and the corresponding fibrations are shown in Figure \ref{SWi} below.
A simple computation shows that 
\begin{align*}
\Ext^1(\mathcal{I}_{Y_i}(2k+c_1), \mathcal{O}_{\mathbb{P}^3}) &\simeq \Ext^2(\mathcal{O}_{\mathbb{P}^3}, \mathcal{I}_{Y_i}(2k+c_1-4))^* \\
&\simeq H^2(\mathbb{P}^3, \mathcal{I}_{Y_i}(2k+c_1-4))^*\simeq H^1(\mathbb{P}^3, \mathcal{O}_{Y_i}(2k+c_1-4))^*
\end{align*}
which is independent of the $0-$dimensional subscheme $Q_i\subset Y_i$. 
Besides, Proposition \ref{bad pair} implies $\dim(\cals^+_{W_i})=\dim (\tilde{\mathcal{Z}}_i)$ since the set of bad pairs in $\cals^+_{W_i}$ is small.
Therefore, $\dim(\mathbb{P}\Ext^1(\mathcal{I}_{Y_i}(2k+c_1), \mathcal{O}_{\mathbb{P}^3}))$, the dimension of the fiber over $\mathcal{Z}_i$, is independent of the value of $i$ ($i=0, 1,..., l_{v(k)}$). 
In addition, $\tilde{\mathcal{Z}}_i$ lies in the boundary of $\tilde{\mathcal{Z}}_{i+1}$, and we have
$\dim(\tilde{\mathcal{Z}}_i)+1=\dim(\tilde{\mathcal{Z}}_{i+1})$ and $\dim(\tilde{\mathcal{Z}}_{l_{v(k)}})=\dim(\tilde{\calh}_{pl})$,
which are analogous to the properties for $\{{Z}_i\}$ ($i=0,1,...,l_{v(k)}$).
\end{remark}

\begin{figure}[ht] 
\begin{tikzcd}
\mathbb{P}\Ext^1((\cali_{A_i}(1), 1), (\cali_{P_i}(b), 0))\arrow[dr, "fiber"]&&\mathbb{P}\Ext^1(\mathcal{I}_{Y_i}(b+1), \mathcal{O}_{\mathbb{P}^3})\arrow[dl, "fiber"]\\
&\cals^+_{W_i}\arrow[dr]\arrow[dl, dashed]&\\
\mathcal{Z}_i&&\cals^{ss}_{W_i}\\
\end{tikzcd}\\
In the diagram, $b:=2k-1+c_1$.
\caption{Fibration of $\cals^+_{W_i}$}
\label{SWi}
\end{figure}

\begin{remark}
According to diagram (\ref{descriptionY}), a stable pair $(E(k),s)\in \cals^+_{W_i}$ corresponds to a one-dimensional scheme in $\bar{X}_i=\bar{\calz}_i=\mathcal{Z}_i\cup \mathcal{Z}_{i-1}\cup \cdots \cup \mathcal{Z}_0$. 
However, stable pairs in $\cals^{+}_{W_i}$ only correspond to one-dimensional schemes in $\mathcal{Z}_i$ (locally closed in $\tilde{\calh}_{pl}$) for the reason of stability.
To see this, when crossings walls as $\delta$ gets smaller, stable pairs $(E(k),s)$ corresponding to one-dimensional schemes in $\mathcal{Z}_j$ ($j<i$) are removed when crossing $W_j$ before hitting $W_i$, hence they are not contained in $\cals^+_{W_i}$ any more.
\end{remark}


Next, we give a simple criterion to check if there are bad pairs in $\cals^+_{W_i}$. 
Recall that $W_i$ is defined by the following sequence of pairs (compare with sequence  (\ref{0-dim wall}))
$$0\lra (\cali_{P_i}(2k-1+c_1), 0)\lra (E(k), s) \lra (\cali_{A_i}(1) ,1) \lra 0.$$

Let $A'$ (resp. $A''$)$\subset A_i$ be the maximum zero-dimensional (resp. one-dimensional) subscheme of $A_i$, and let $d$ be the degree of $A''$. 
For a 0-dimensional scheme $P$, define $P_H\subset P$ to be the subscheme supported on $H$, i.e. $\calo_{P_H}$ is the kernel of the morphism induced by the multiplication by $H$,
$$0\lra \calo_{P_H}\lra \calo_P \overset{\cdot H}{\lra} \calo_P. $$

\begin{lemma}{\label{criterion:sat}}
For $P_i$ and $A_i$ such that $P_i\cap A'=\emptyset$, the corresponding extensions $(E(k),s)\in \cals^+_{W_i}$ are saturated if and only if $H^1(\p3, \cali_{P_{i H}|H}(k+d-1))=0$. In particular, all stable pairs in $\cals^+_{W_0}$ and $\cals^+_{W_{l_{v(k)}}}$ are saturated. 
\end{lemma}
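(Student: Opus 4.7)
Since $(E(k),s)$ is saturated by definition iff $Q:=\coker(s)$ is torsion-free on $\p3$, my strategy is to translate torsion-freeness of $Q$ into a cohomological criterion on the plane $H\subset\p3$ cut out by the distinguished section of $\cali_{A_i}(1)$. The snake lemma applied to the defining sequence of $W_i$ (as in Diagram (\ref{diagrampair})) first yields the short exact sequence
$$0\to \cali_{P_i}(b)\to Q\to \cali_{A_i|H}(1)\to 0,$$
where $b:=2k-1+c_1$. Under the disjointness hypothesis (read here as $P_i\cap A'=\emptyset$), I decompose $A_i=A'\sqcup A''$ with $A''$ a Cartier divisor of degree $d$ on $H$, so that $\cali_{A_i|H}(1)=\cali_{A'|H}(1-d)$ as sheaves on $H$.

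Since $\cali_{P_i}(b)$ is torsion-free, any torsion of $Q$ embeds into $\cali_{A_i|H}(1)$ and is therefore supported on $H$; hence $Q$ is torsion-free iff $\ker(\cdot H\colon Q\to Q(1))=0$. Applying the snake lemma to the multiplication-by-$H$ morphism of the sequence above (noting that $\cdot H$ annihilates $\cali_{A_i|H}(1)$) produces a connecting map $\delta\colon \cali_{A'|H}(1-d)\to \cali_{P_i}(b+1)|_H$ whose kernel equals the torsion of $Q$. The target admits the filtration
$$0\to \calo_{P_{iH}}\to \cali_{P_i}(b+1)|_H\to \cali_{P_i\cap H,H}(b+1)\to 0$$
obtained by tensoring the Koszul complex of $H$ with $\cali_{P_i}(b+1)$; in particular, $\delta$ is injective iff its composition with the projection onto the torsion-free quotient is nonzero.

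I then rewrite this criterion cohomologically. Grothendieck duality for the divisor $j\colon H\hookrightarrow\p3$, with $\omega_{H/\p3}=\calo_H(1)$, identifies
$$Ext^1_{\p3}(\cali_{A_i|H}(1),\cali_{P_i}(b))\;\simeq\; Hom_H\bigl(\cali_{A'|H}(1-d),\,\cali_{P_i}(b+1)|_H\bigr),$$
so that the extension class of $Q$ corresponds precisely to $\delta$. Using the Koszul resolution $0\to\calo(-d)\to\cali_{A'}(1-d)\to j_*\cali_{A'|H}(1-d)\to 0$ on $\p3$, this $Ext^1$ further identifies with $H^0(H,\cali_{P_i}(b+d)|_H)$. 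Feeding the filtration from the previous step into the long exact sequence of $H^0$'s, and invoking Serre duality on $H\cong\p2$ (with $\omega_{\p2}=\calo_H(-3)$) to transfer the non-degeneracy of $\delta$ to a statement about $\cali_{P_{iH}|H}$, converts the requirement that every extension in $\cals^+_{W_i}$ be saturated into the vanishing $H^1(H,\cali_{P_{iH}|H}(k+d-1))=0$.

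The main obstacle I anticipate is precisely this final translation, because the naive twist appearing in $\cali_{P_i}(b+d)|_H$ is $b+d=2k+c_1+d-1$, whereas the claim involves the different sheaf $\cali_{P_{iH}|H}$ in the twist $k+d-1$; reconciling the two requires careful Serre-duality bookkeeping and exploiting Gorenstein duality on the Artinian ring $\calo_{P_i}$ to relate $\calo_{P_{iH}}$ and $\calo_{P_i\cap H}$. The ``in particular'' assertions then follow at once: at $W_0$ one has $P_0=\emptyset$ so $\cali_{P_{0H}|H}=\calo_H$ and $H^1(\calo_H(k+d-1))=0$ on $\p2$ for every $k,d\geq 1$; at $W_{l_{v_k}}$ the curve $A_{l_{v_k}}$ is pure one-dimensional (so $d$ is maximal), and combining this with the explicit formula for $l_{v_k}$ in Remark \ref{0-dim walls} yields the required vanishing.
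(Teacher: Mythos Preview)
Your snake-lemma setup is correct and is a genuinely different route from the paper's. The paper does not analyse the torsion of a single $Q$ at all; instead it argues by a dimension count. It first observes (using Lemma~\ref{surjtocurve}) that the assignment $(E(k),s)\mapsto Q=\coker(s)$ surjects onto $Ext^1_{\p3}(\cali_{A_i|H}(1),\cali_{P_i}(b))$, so ``every pair saturated'' is equivalent to ``every class in this Ext group is an ideal sheaf''. Then it computes this Ext group directly (using $P_i\cap A'=\emptyset$ to pass from $\cali_{A'|H}$ to $\calo_H$, and then the Koszul resolution of $\calo_H$) and shows its dimension equals $h^0(\calo_H(b+d))+h^1(\cali_{P_{iH}|H}(b+d))$. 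Since the first summand already accounts for all curves $Y_i$, equality forces the $h^1$ to vanish. Your approach instead characterises, for each extension class, exactly when the resulting $Q$ has torsion; this is conceptually sharper but, as you note, harder to close.

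There is a genuine gap in your final translation. If you push your argument through, ``every nonzero $\delta$ is injective'' becomes ``$\ker\bigl(Hom(\cali_{A'|H}(1-d),\cali_{P_i}(b+1)|_H)\to Hom(\cali_{A'|H}(1-d),\cali_{P_i\cap H,H}(b+1))\bigr)=0$'', i.e.\ $Hom(\cali_{A'|H}(1-d),\calo_{P_{iH}})=0$, which under the disjointness hypothesis is simply $P_{iH}=\emptyset$. This is strictly stronger than the $H^1$ vanishing and is \emph{not} the correct criterion: the reason is that $\cals^+_{W_i}$ is parametrised by the pair-Ext group $Ext^1((\cali_{A_i}(1),1),(\cali_{P_i}(b),0))$, and the map to $Ext^1(\cali_{A_i|H}(1),\cali_{P_i}(b))$ has a nontrivial kernel (compare Lemma~\ref{Ext+} with the paper's Ext computation). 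So not every nonzero $\delta$ arises from a stable pair, and your injectivity test is being applied to too many $\delta$'s. The paper's dimension count sidesteps this entirely. Incidentally, the twist discrepancy you flagged is real: the paper's own proof produces $b+d=2k+c_1+d-1$, not the $k+d-1$ in the statement, so that is a typo in the lemma rather than a flaw in your reasoning.

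For the ``in particular'' at $W_{l_{v_k}}$, your own framework gives a one-line argument you seem to have missed: since $A_{l_{v_k}}$ is pure, $A'=\emptyset$, so the source $\cali_{A'|H}(1-d)=\calo_H(1-d)$ is a line bundle on $\p2$, and any nonzero map out of a line bundle into a sheaf with torsion-free rank-one quotient is automatically injective. This is exactly the paper's reasoning (``$A'=\emptyset$ at $W_{l_{v_k}}$''), and it avoids the formula from Remark~\ref{0-dim walls} altogether.
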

\begin{proof}
If a pair $(E(k),s)$ is saturated, then the cokernel of $s$ is the twisted ideal sheaf $\cali_Y(2k+c_1)$, satisfying the sequence 
$$ 0\lra \cali_{P_i}(2k-1+c_1)\lra \cali_{Y_i}(2k+c_1)\lra \cali_{A_i|H}(1) \lra 0. $$
We know that such a sequence recovers the wall $W_i$ from Lemma \ref{surjtocurve}. Hence, it suffices to prove that the vector space $\Ext^1(\cali_{A_i|H}(1), \cali_{P_i}(2k-1+c_1))$ corresponds to the space of all possible one-dimensional schemes $Y_i$ fitting into $W_i$.

Apply the functor $\Hom(-,\cali_{P_i}(2k-1+c_1))$ to the sequence 
$$ 0\lra \cali_{A'|H}(1-d)\lra \calo_H(1-d)\lra \calo_{A'} \lra 0,$$
and part of the induced long exact sequence is as follows
$$ 0 \lra \Ext^1(\calo_{A'},\cali_{P_i}(2k-1+c_1)) \lra \Ext^1(\calo_H(1-d),\cali_{P_i}(2k-1+c_1)) \lra $$
$$ \lra \Ext^1(\cali_{A'|H}(1-d), \cali_{P_i}(2k-1+c_1)) \lra \Ext^2(\calo_{A'}, \cali_{P_i}(2k-1+c_1))\lra \dots $$

We have $\Ext^1(\calo_{A'},\cali_{P_i}(2k-1+c_1))=\Ext^2(\calo_{A'},\cali_{P_i}(2k-1+c_1))=0$ since $P_i\cap A'=\emptyset$. Then,
\begin{align*}
\Ext^1(\cali_{A'|H}(1-d), \cali_{P_i}(2k-1+c_1)) &= \Ext^1(\calo_H(1-d),\cali_{P_i}(2k-1+c_1)) \\
&= H^0(\p3, \mathcal{E}xt^1(\op3((1-d), \cali_{P_i}(2k-1+c_1)).
\end{align*}
We compute $\mathcal{E}xt^1(\op3((1-d), \cali_{P_i}(2k-1+c_1))$ using the following resolution for $\calo_H(1-d)$ 
$$ 0\lra \op3(-d)\lra \op3(1-d)\lra \calo_H(1-d) \lra 0,$$ 
and $\mathcal{E}xt^1(\op3((1-d), \cali_{P_i}(2k-1+c_1))$ satisfies the sequence
$$ 0\lra \calo_{P_H}\lra \mathcal{E}xt^1(\op3((1-d), \cali_{P_i}(2k-1+c_1))\lra \cali_{P_H}(d+2k-1+c_1)\lra 0.$$
Recall that $\calo_{P_H}$ is the kernel of the following multiplication by $H$
\[0\lra \calo_{P_H}\lra \calo_P\overset{\cdot H}{\lra}\calo_P.
\]
Therefore,
$$ h^0(\p3, \mathcal{E}xt^1(\calo_H(1-d), \cali_{P_i}(2k-1+c_1)) = |P_H|+h^0(\p3, \cali_{P_H}(d+2k-1+c_1)) = $$
$$ = |P_H|+h^0(\p3, \calo_H(d+2k-1+c_1))-|P_H|+h^1(\p3, \calo_H(d+2k-1+c_1)) = $$
$$ = h^0(\p3, \calo_H(d+2k-1+c_1))+h^1(\p3, \calo_H(d+2k-1+c_1)). $$


The first term parameterizes all possible $Y_i$ since $Y_i$ is the union of a degree $(2k+d-1+c_1)$ curve $P_i$ in $H$ and a fixed zero-dimensional scheme $A_0$. 
Therefore, $(E(k), s)$ is saturated if and only if $h^1(\p3, \calo_H(d+2k-1+c_1))=0$.
In particular, $P_0=\emptyset$ at $W_0$ and $A'=\emptyset$ at $W_{l_{v(k)}}$, which imply that all pairs in $\cals^+_{W_0}$ and $\cals^+_{W_{l_{v(k)}}}$ are saturated.
\end{proof}

\subsection{Birational transformations for $\tilde{\calh}_{pl}$ when crossing $W_i$'s}{\label{sec: birat'l trans}}

We work with a fixed class $v(k)$, but we will mute it in any related notations if there is no ambiguity.
We will prove the main theorem of this section, that the wall-crossings at $W_i$ induce the following series of transformations for an open subscheme of $\tilde{\calh}_{pl}$:
\begin{itemize}
    \item Flip for $i=0,1,..., l_{v(k)} $;
    \item Divisorial contraction for $i=l_{v(k)}-1$;
    \item Removal of an open subscheme in $\tilde{\calz}_{l_{v(k)}}$ for $i=l_{v(k)}$.
\end{itemize}

This is proved by computing the dimensions of the fibers of the morphisms 
$$\phi^{\pm}_{W_i}: \cals^{\pm}_{W_i} \to \cals^{ss}_{W_i}.$$ 

We clarify some notations in the next Definition for later use.   

\begin{definition}
\begin{enumerate}
\item
Define $\mathcal{R}_{W_i}\subset \cals^{ss}_{W_i}$ to be the image of $\tilde{\mathcal{X}}_i\subset \tilde{\mathcal{Z}}_i\subset \cals^+_{W_i}$ under $\phi_{W_i}^+$. In other words, $\mathcal{R}_{W_i}$ parameterizes pairs of the form
$$ (\cali_{P_i}(2k+c_1-1), 0) ~\bigoplus~ (\cali_{A_i}(1), 1) $$
where $P_i\cap H=\emptyset$. 

\item
Define $\tilde{\mathcal{X}}^-_{W_i}\subset \cals^-_{W_i}$ to be the pre-image of $\mathcal{R}_{W_i}$ under $\phi_{W_i}^-: \cals^-_{W_i}\to \cals^{ss}_{W_i}$. 

\item
There is a morphism $\Gamma: \cals_{W_{\emptyset}^-}\to \calh_{\p3}(P)$ (Theorem \ref{Gie-Hilb}). 
Let $\calh_i\subset \calh^{P}_{\p3}$ \linebreak ($i=1,2,...$) be the components of the Hilbert scheme.
By the theorem of semi-continuity, there is an open dense subscheme $\calh^{\circ}_i\subset \calh_i$ for every $i$, over which the fibers of $\Gamma$ are constant. Using the argument in Lemma \ref{irreducibility}, the subschemes $\tilde{\calh}_i^{\circ}\subset\cals_{W_{\emptyset}^-}$ lying over $\calh_i^{\circ}$ are irreducible. 
Define $\tilde{\calh}_i\subset \cals_{W_{\emptyset}^-}$ to be the closure of $\tilde{\calh}_i^{\circ}$ in $\cals_{W_{\emptyset}^-}$. 
Then, we have $\cals_{W^-_{\emptyset}}=\bigcup_{i=1}^{l_{v(k)}}\tilde\calh_i$. 

\item
Let $C_i=(W_{i}, W_{i-1})$ ($i=1,2,..., l_{v(k)}$) be the chambers between consecutive walls. Denote by $\tilde\calh_i'$ or $\tilde\calh_i^{(1)}$ the component (possibly empty) of $\cals_{W_0^-}$ obtained from $\tilde\calh_i$ when crossing $W_0$. 
Inductively, let $\tilde\calh_i^{(s)}$ ($s\in \mathbb{Z}_{\geq 1}$) be the component of $\cals_{W^-_{s-1}}$ obtained from $\tilde\calh_i^{(s-1)}$ when crossing $W_{s-1}$.

\item
Define $\cals^{+}_{j,W_i}:=\cals^{+}_{W_i}\bigcap \tilde\calh^{(i)}_j$
and 
$\cals^{-}_{j,W_i}:=\cals^{-}_{W_i}\bigcap \tilde\calh^{(i+1)}_j.$

\item
Denote by $\tilde\calh_{j, W_i}$ ($i,j \in \mathbb{Z}_{\geq 0}$) the components of $\cals_{W_i}$ when $\delta=W_i$. 

\item
Define $\cals^{ss}_{j,W_i}:=\cals^{ss}_{W_i}\bigcap \tilde\calh_{j,W_i}$.
\end{enumerate}
\end{definition}

Putting all the above together, we have the following commutative diagram (Figure (\ref{diagramwall})) illustrating the wall-crossing at a wall $W_i$.
The first and second rows show the components of the moduli space under a wall-crossing. The third and fourth rows show the subschemes contracted when hitting a wall and the closed subscheme $\calr_{W_i}$ over which the dimension of the fibers may jump.

\begin{figure}[ht]
\begin{tikzcd}
\cals_{{W_i}^{-}}
\simeq \bigcup \tilde\calh_j^{(i+1)}
\arrow[r,"\Phi^-_{{W_i}}"]&
\cals_{{W_i}}
\simeq \bigcup \tilde\calh_{j, W_i}
&
\cals_{{W_i}^{+}}
\simeq \bigcup \tilde\calh_j^{(i)}
\arrow[l, "\Phi^+_{{W_i}}" above]\\
\tilde\calh_{j}^{(i+1)}\arrow[r, "\Phi^-_{{W_i}}"]\arrow[u,hook]&\tilde\calh_{j,W_i}\arrow[u,hook]&\tilde\calh_{j}^{(i)}\arrow[l, "\Phi^+_{{W_i}}" above]\arrow[u,hook]\\
\cals^-_{j,{W_i}}\arrow[r, "\phi^-_{{W_i}}"]\arrow[u,hook]&\cals^{ss}_{j,{W_i}}\arrow[u,hook]&\cals^+_{j,{W_i}}\arrow[l, "\phi^+_{{W_i}}" above]\arrow[u,hook]\\
\tilde{\mathcal{X}}^-_{{W_i}}\arrow[r, "\phi^-_{{W_i}, v(k)}"]\arrow[u,hook]&\mathcal{R}_{{W_i}}\arrow[u,hook]&\tilde{\mathcal{X}}_{{i}}\arrow[l, "\phi^+_{{W_i}}" above]\arrow[u,hook].\\
\end{tikzcd}
\caption{Moduli spaces and morphisms at a wall-crossing.} {\label{diagramwall}}
\end{figure}

The next Lemma shows that the dimension of the fiber of $\phi^+_{W_i}$ for $i=0,1,...,l_{v(k)}$ is at least $2$.

\begin{lemma}{\label{Ext+}}
Let $A_i\subset H$ be a one-dimensional scheme contained in a plane $H\subset \p3$, and $P_i\subset \mathbb{P}^3$ be a $0-$dimensional scheme such that $P_i\cap A_i=\emptyset$. 
Then, for $k\geq 1$ we have $\ext^1((\mathcal{I}_{A_i}(1), 1), (\mathcal{I}_{P_i}(2k-1+c_1(E)), 0))\geq 3$.
\end{lemma}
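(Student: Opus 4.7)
The plan is to reduce the $\Ext^1$ of pairs to an $\Ext^1$ of sheaves via Lemma \ref{LES}, and then to bound the latter from below via cohomology of line bundles on the plane $H$.

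First, since $A_i\subset H$, the hyperplane equation $s_H$ gives a short exact sequence of pairs
\[
0 \to (\calo_{\p3}, \mathbf{1}) \to (\cali_{A_i}(1), 1) \to (\cali_{A_i|H}(1), 0) \to 0.
\]
Applying $\Ext^{*}\bigl(-, (\cali_{P_i}(m), 0)\bigr)$ with $m := 2k-1+c_1(E)$, I will use Lemma \ref{LES} to see that the natural maps $\Hom(\calo_{\p3}, \cali_{P_i}(m)) \to H^0(\cali_{P_i}(m))$ and $\Ext^1(\calo_{\p3}, \cali_{P_i}(m)) \to H^1(\cali_{P_i}(m))$ appearing there are identities, so $\Hom$ and $\Ext^1$ from $(\calo_{\p3}, \mathbf{1})$ into $(\cali_{P_i}(m), 0)$ both vanish; hence
\[
\Ext^1\bigl((\cali_{A_i}(1), 1), (\cali_{P_i}(m), 0)\bigr) \;\cong\; \Ext^1\bigl(\cali_{A_i|H}(1), \cali_{P_i}(m)\bigr).
\]

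Next, let $d \geq 1$ be the degree of the pure $1$-dimensional part $A''$ of $A_i$; since $A''$ is a Cartier divisor in $H$ with $\cali_{A''|H} = \calo_H(-d)$, the inclusion $\cali_{A_i|H} \subset \cali_{A''|H}$ yields a short exact sequence $0 \to \cali_{A_i|H}(1) \to \calo_H(1-d) \to Q \to 0$, where $Q$ is a $0$-dimensional sheaf supported in $A_i$, hence disjoint from $P_i$. Because $Q$ has codimension $3$ in $\p3$ and $\supp(Q) \cap P_i = \emptyset$, the vanishings $\Ext^j(Q, \cali_{P_i}(m)) = \Ext^j(Q, \calo_{\p3}(m)) = 0$ for $j = 1, 2$ yield $\Ext^1(\cali_{A_i|H}(1), \cali_{P_i}(m)) \cong \Ext^1(\calo_H(1-d), \cali_{P_i}(m))$. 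Applying $\Hom(\calo_H(1-d), -)$ to $0 \to \cali_{P_i}(m) \to \calo_{\p3}(m) \to \calo_{P_i} \to 0$ and using $\Hom(\calo_H, \calo_{\p3}(n)) = 0$ (as $\mathcal{H}om(\calo_H, \calo_{\p3}) = 0$), I then obtain
\[
0 \to K \to \Ext^1(\calo_H(1-d), \cali_{P_i}(m)) \to \ker(L \to M) \to 0,
\]
with $K := \Hom(\calo_H, \calo_{P_i})$, $M := \Ext^1(\calo_H, \calo_{P_i})$, and $L := \Ext^1(\calo_H, \calo_{\p3}(m+d-1)) \cong H^0(\calo_H(m+d))$.

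Using the Koszul resolution $0 \to \calo_{\p3}(-1) \to \calo_{\p3} \to \calo_H \to 0$, both $K$ and $M$ arise as the kernel and cokernel of multiplication by $s_H$ on $H^0(\calo_{P_i})$, so $\dim K = \dim M$, while $\dim L = \binom{m+d+2}{2}$. This gives $\dim \Ext^1(\calo_H(1-d), \cali_{P_i}(m)) \geq \dim K + (\dim L - \dim M) = \binom{m+d+2}{2}$. With $k \geq 1$, $c_1(E) \in \{-1, 0\}$ (since $v$ is a normalized rank-$2$ class), and $d \geq 1$, we have $m + d + 2 = 2k + 1 + c_1(E) + d \geq 3$, so $\binom{m+d+2}{2} \geq 3$. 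The main subtlety will be handling $A_i$ with embedded $0$-dimensional components; this is addressed cleanly in Step 2 by using the inclusion $\cali_{A_i|H} \subset \calo_H(-d)$ (always valid) rather than a product decomposition of the ideal (which would require disjointness of the $0$- and $1$-dimensional parts of $A_i$).
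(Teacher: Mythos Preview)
Your proof is correct and takes a genuinely different route from the paper's. The paper applies Lemma~\ref{LES} directly to the pair $\Lambda=(\cali_{A_i}(1),1)$ and $\Lambda'=(\cali_{P_i}(l),0)$, shows $\Hom(\Lambda,\Lambda')=0$, and then unwinds the resulting long exact sequence by computing $\hom$ and $\text{ext}^1$ between the underlying sheaves via further auxiliary sequences; this yields the exact formula $\text{ext}^1(\Lambda,\Lambda')=\tfrac{1}{2}(l+2)(l+1)+h^1(\p3,\calo_{A_i}(-3-l))+\dim(\mathrm{Im}\,\phi)$, which is then bounded below by $\binom{l+2}{2}\ge 3$ using $l\ge 1$. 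You instead kill the pair structure in one stroke via the short exact sequence of pairs $0\to(\calo_{\p3},\mathbf{1})\to(\cali_{A_i}(1),1)\to(\cali_{A_i|H}(1),0)\to 0$, observe (again via Lemma~\ref{LES}) that $(\calo_{\p3},\mathbf{1})$ has no $\Hom$ or $\Ext^1$ into $(\cali_{P_i}(m),0)$, and reduce to the sheaf-theoretic $\Ext^1(\cali_{A_i|H}(1),\cali_{P_i}(m))$, which you then bound by $\binom{m+d+2}{2}$ using the inclusion $\cali_{A_i|H}\subset\calo_H(-d)$ and the Koszul resolution of $\calo_H$. Your argument is cleaner and yields a slightly sharper numerical bound (it exploits $d\ge 1$, so it does not require the hypothesis $l\ge 1$ that the paper invokes); on the other hand, the paper's approach produces an exact expression for $\text{ext}^1(\Lambda,\Lambda')$, which is what is actually used downstream (see Remark~\ref{constfiber}) to conclude that this dimension is constant as $P_i$ varies with $P_i\cap A_i=\emptyset$.
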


\begin{proof}
We use the long exact sequence in Lemma \ref{LES} for pairs $\Lambda=(\mathcal{I}_{A_i}(1), 1)$ and $\Lambda'=(\mathcal{I}_{P_i}(2k-1+c_1(E)), 0)$. 
For simplicity, let $l:=2k-1+c_1$. $l\geq 1$ since otherwise, the wall $W$ defined by the pairs $(\mathcal{I}_{A_i}(1), 1)$ and $(\mathcal{I}_{P_1}, 0)$ will be negative.
The corresponding long exact sequence is as follows:
$$ 0\lra \Hom(\Lambda, \Lambda') \lra \Hom(\mathcal{I}_{A_i}(1), \mathcal{I}_{P_i}(l)) \lra \Hom(\mathbb{C}, H^0(\mathbb{P}^3, \mathcal{I}_{P_i}(l))) \lra  $$
$$ \lra \Ext^1(\Lambda, \Lambda') \lra \Ext^1(\mathcal{I}_{A_i}(1), \mathcal{I}_{P_i}(l)) \lra $$
$$ \lra \Hom(\mathbb{C}, H^1(\mathbb{P}^3, \mathcal{I}_{P_i}(l))) \stackrel{\phi}{\lra} \Ext^2(\Lambda, \Lambda') \Ext^2(\Lambda, \Lambda') \lra \cdots  $$

Firstly, we have that $\Hom(\Lambda, \Lambda')=\Hom((\mathcal{I}_{A_i}(1), 1), (\mathcal{I}_{P_i}(l), 0))=0$; otherwise, $\Hom(\mathcal{I}_{A_i}(1), \mathcal{I}_{P_i}(l))\neq 0$ induces injective morphisms $\cali_{A_i}(1)\hookrightarrow \cali_{P_i}(l)$. 
Then, $H^0(\p3, \cali_{A_i}(1))\subset H^0(\p3, \cali_{P_i}(l))$, which implies that any nontrivial section $s_1\in H^0(\p3, \cali_{A_i}(1))$ induces a section $s_2\in H^0(\p3, \cali_{P_i}(l))$. 
This contradicts with the assumption $\Hom((\cali_{A_i}(1),1),(\cali_{P_i}(l)),0)\neq 0$.

Then, the above long exact sequence implies the following equation:
\begin{equation}{\label{ext1}}
\begin{array}{rl}
\ext^1(\Lambda, \Lambda')=&h^0(\mathbb{P}^3, \mathcal{I}_{P_i}(l))-h^1(\mathbb{P}^3, \mathcal{I}_{P_i}(l))- \text{hom}(\mathcal{I}_{A_i}(1), \mathcal{I}_{P_i}(l))\\
&+\ext^1(\mathcal{I}_{A_i}(1), \mathcal{I}_{P_i}(l))+\dim(\im(\phi))\\
=&\chi(\mathcal{I}_{P_i}(l))-\text{hom}(\mathcal{I}_{A_i}(1), \mathcal{I}_{P_i}(l))\\
&+\ext^1(\mathcal{I}_{A_i}(1), \mathcal{I}_{P_i}(l))+\dim(\im(\phi))\\
=&\chi(\mathcal{O}_{\mathbb{P}^3}(l))-\chi(\mathcal{O}_{P_i}(l))-\hom(\mathcal{I}_{A_i}(1), \mathcal{I}_{P_i}(l))\\
&+\ext^1(\mathcal{I}_{A_i}(1), \mathcal{I}_{P_i}(l))+\dim(\im(\phi)).\\
\end{array}
\end{equation}
The second equality follows from the fact that $H^2(\mathbb{P}^3, \mathcal{I}_{P_i}(l))=H^3(\mathbb{P}^3, \mathcal{I}_{P_i}(l))=0$, and the third equality follows from the additivity of $\chi$.

We simplify the terms $\Hom(\mathcal{I}_{A_i}(1), \mathcal{I}_{P_i}(l))$ and $\Ext^1(\mathcal{I}_{A_i}(1), \mathcal{I}_{P_i}(l))$ by applying the functor $\Hom(\mathcal{I}_{A_i}(1), -)$ to the sequence $0\to \mathcal{I}_{P_i}(l)\to \mathcal{O}_{\mathbb{P}^3}(l)\to \mathcal{O}_{P_i} \to 0$.
The corresponding long exact sequence is as follows:
$$ 0\lra \Hom(\mathcal{I}_{A_i}(1), \mathcal{I}_{P_i}(l)) \lra  \Hom(\mathcal{I}_{A_i}(1), \mathcal{O}_{\mathbb{P}^3}(l)) \lra  \Hom(\mathcal{I}_{A_i}(1), \mathcal{O}_{P_i}) \lra $$
$$ \lra \Ext^1(\mathcal{I}_{A_i}(1), \mathcal{I}_{P_i}(l)) \lra \Ext^1(\mathcal{I}_{A_i}(1), \mathcal{O}_{\mathbb{P}^3}(l)) \lra \Ext^1(\mathcal{I}_{A_i}(1), \mathcal{O}_{P_i}) \lra \cdots $$
and we have $\Hom(\mathcal{I}_{A_i}(1), \mathcal{O}_{P_i})\simeq\mathbb{C}^{|P_i|}$ and $\Ext^1(\mathcal{I}_{A_i}(1), \mathcal{O}_{P_i})=0$. 
We also have 
$\Hom(\mathcal{I}_{A_i}(1), \mathcal{O}_{\mathbb{P}^3}(l))=\Hom(\mathcal{O}_{\mathbb{P}^3}(1), \mathcal{O}_{\mathbb{P}^3}(l))=H^0(\mathbb{P}^3, \mathcal{O}_{\mathbb{P}^3}(l-1))$ 
and
\begin{align*}
\Ext^1(\mathcal{I}_{A_i}(1), \mathcal{O}_{\mathbb{P}^3}(l)) &= \Ext^2(\mathcal{O}_{A_i}(1), \mathcal{O}_{\mathbb{P}^3}(l))\simeq  \\
& \simeq \Ext^1(\mathcal{O}_{\mathbb{P}^3}(l), \mathcal{O}_{A_i}(1))^{*} = H^1(\mathbb{P}^3, \mathcal{O}_{A_i}(-3-l))^*.
\end{align*} 
Therefore, 
\begin{flushleft}
$ \text{hom}(\mathcal{I}_{A_i}(1), \mathcal{I}_{P_i}(l))-\ext^1(\mathcal{I}_{A_i}(1), \mathcal{I}_{P_i}(l)) = $
\end{flushleft}
\begin{flushright}
$=h^0(\mathbb{P}^3, \mathcal{O}_{\mathbb{P}^3}(l-1))-h^1(\mathbb{P}^3, \mathcal{O}_{A_i}(-3-l))-|P_i|. $
\end{flushright}
Take this result back to equation (\ref{ext1}), and we get (using $\dim(\im(\phi))\ge0$)
\[
\begin{array}{rl}
\ext^1(\Lambda, \Lambda')=&\chi(\mathcal{O}_{\mathbb{P}^3}(l))-\chi(\mathcal{O}_{P_i}(l))-\text{hom}(\mathcal{I}_{A_i}(1), \mathcal{I}_{P_i}(l))\\
&+\ext^1(\mathcal{I}_{A_i}(1), \mathcal{I}_{P_i}(l))+\dim(\im(\phi))\\
\ge&\chi(\mathcal{O}_{\mathbb{P}^3}(l))-|P_i|-h^0(\mathbb{P}^3, \mathcal{O}_{\mathbb{P}^3}(l-1)) + \! h^1(\mathbb{P}^3, \mathcal{O}_{A_i}(-3-l)) \! +|P_i|\\
\ge&\chi(\mathcal{O}_{\mathbb{P}^3}(l))-h^0(\mathbb{P}^3, \mathcal{O}_{\mathbb{P}^3}(l-1))+h^1(\mathbb{P}^3, \mathcal{O}_{A_i}(-3-l))\\
\ge&\binom{l+3}{3}-\binom{l+2}{3}+h^1(\mathbb{P}^3, \mathcal{O}_{A_i}(-3-l))\\
\ge&\frac{1}{2}(l+2)(l+1)+h^1(\mathbb{P}^3, \mathcal{O}_{A_i}(-3-l)) \ge 3\\
\end{array}
\]
as desired.
\end{proof}

\begin{remark}{\label{constfiber}}
Due to Proposition \ref{bad pair} and the fact that the projectivization $\mathbb{P}\Big(\Ext^1(\cali_{Y_i}(2k+c_1-1), \calo_{\p3})\Big)$ is independent of $i$, we have that the dimension of $\ext^1((\mathcal{I}_{A_i}(1), 1), (\mathcal{I}_{P_i}(2k-1+c_1), 0))$ does not depend on $P_i$ provided $P_i\cap A_i\neq \emptyset$. 
\end{remark}

The next Lemma computes the dimension of the fibers of the morphisms $\phi^-_{W_i}$ over $\mathcal{R}_{W_i}$
\[
\begin{tikzcd}
\cals^-_{W_i}\arrow[r, "\phi^-_{W_i}"]&\cals^{ss}_{W_i}\\
\tilde{\mathcal{X}}^-_{W_i}\arrow[r, "\phi^-_{W_i}"]\arrow[u, hook]&\mathcal{R}_{W_i}\arrow[u, hook].\\
\end{tikzcd}
\]

\begin{lemma}{\label{Ext-}}
Assume that $A_i\subset H$ is a one-dimensional scheme, $Q_i\subset A_i$ be the maximum $0-$dimensional subscheme of $A_i$, and $P_i\subset \mathbb{P}^3$ is a $0-$dimensional scheme with $P_i\cap A_i=\emptyset$. Then we have $\ext^1((\mathcal{I}_{P_i}(2k-1+c_1), 0),(\mathcal{I}_{A_i}(1), 1))=|Q_i|$. 
\end{lemma}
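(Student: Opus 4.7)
The plan is to compute $\Ext^1(\Lambda, \Lambda')$ for $\Lambda := (\mathcal{I}_{P_i}(l), 0)$ and $\Lambda' := (\mathcal{I}_{A_i}(1), 1)$, where $l := 2k-1+c_1$, by first reducing the pair-$\Ext$ to a sheaf-$\Ext$, then to a cohomology group on $\p3$, and finally extracting $|Q_i|$ from the torsion/pure decomposition of $\calo_{A_i}$.

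Since the section of $\Lambda$ is zero, every term of the form $\Hom(\mathbb{C}\cdot(s), -)$ in Lemma \ref{LES} vanishes, so the long exact sequence collapses to an isomorphism $\Ext^1(\Lambda, \Lambda') \cong \Ext^1(\mathcal{I}_{P_i}(l), \mathcal{I}_{A_i}(1))$. Next, I would apply $\Hom(-, \mathcal{I}_{A_i}(1))$ to the ideal sequence $0 \to \mathcal{I}_{P_i}(l) \to \calo_{\p3}(l) \to \calo_{P_i} \to 0$. Because $P_i \cap A_i = \emptyset$, the sheaf $\mathcal{I}_{A_i}(1)$ is locally free in a neighborhood of $P_i$, so local duality on the smooth threefold forces $\inext^p(\calo_{P_i}, \mathcal{I}_{A_i}(1)) = 0$ for $p < 3$; the local-to-global spectral sequence then gives $\Ext^p(\calo_{P_i}, \mathcal{I}_{A_i}(1)) = 0$ for $p \leq 2$, and consequently
$$\Ext^1(\mathcal{I}_{P_i}(l), \mathcal{I}_{A_i}(1)) \cong \Ext^1(\calo_{\p3}(l), \mathcal{I}_{A_i}(1)) = H^1(\p3, \mathcal{I}_{A_i}(1-l)).$$

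To finish, I would show $h^1(\p3, \mathcal{I}_{A_i}(1-l)) = |Q_i|$ for $l \geq 1$. The ideal sequence $0 \to \mathcal{I}_{A_i}(1-l) \to \calo(1-l) \to \calo_{A_i}(1-l) \to 0$, together with $H^1(\calo_{\p3}(m)) = 0$, identifies $h^1(\mathcal{I}_{A_i}(1-l))$ with the dimension of $\coker(H^0(\calo(1-l)) \to H^0(\calo_{A_i}(1-l)))$. Decomposing via the torsion sequence $0 \to \calo_{Q_i} \to \calo_{A_i}(1-l) \to \calo_{A_i''}(1-l) \to 0$ -- where $A_i''$ is the pure one-dimensional part of $A_i$, a planar Cartier divisor of some degree $d \geq 1$ on $H \simeq \p2$ -- and using $H^{\geq 1}(\calo_{Q_i}) = 0$, one gets $h^0(\calo_{A_i}(1-l)) = |Q_i| + h^0(\calo_{A_i''}(1-l))$. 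The resolution $0 \to \calo_H(1-l-d) \to \calo_H(1-l) \to \calo_{A_i''}(1-l) \to 0$ on $H \simeq \p2$ together with the vanishing $H^1(\calo_{\p2}(m)) = 0$ then shows that $h^0(\calo_{A_i''}(1-l)) = 0$ for $l \geq 2$, while for $l = 1$ the constant $\mathbb{C} = H^0(\calo)$ restricts isomorphically onto $H^0(\calo_{A_i''}) = \mathbb{C}$. In either case the cokernel of the restriction map equals the $|Q_i|$-dimensional torsion contribution.

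The main technical point will be the vanishing $\inext^{<3}(\calo_{P_i}, \mathcal{I}_{A_i}(1)) = 0$, whose proof uses the disjointness $P_i \cap A_i = \emptyset$ crucially to guarantee local freeness of $\mathcal{I}_{A_i}(1)$ at $P_i$; the rest is routine bookkeeping of cohomology long exact sequences via line-bundle vanishings on $\p3$ and $\p2$.
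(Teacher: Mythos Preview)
Your proof is correct and follows essentially the same route as the paper: reduce the pair-$\Ext$ to a sheaf-$\Ext$ via Lemma~\ref{LES}, strip off $P_i$ using the ideal sequence $0\to\cali_{P_i}(l)\to\calo(l)\to\calo_{P_i}\to0$ together with the vanishing of $\Ext^{1,2}(\calo_{P_i},\cali_{A_i}(1))$, identify the result with $H^1(\p3,\cali_{A_i}(1-l))$, and finish with the ideal sequence of $A_i$ and the case split $l=1$ versus $l\ge2$. The only cosmetic differences are that the paper proves the vanishing $\Ext^{1,2}(\calo_{P_i},\cali_{A_i}(1))=0$ by applying $\Hom(\calo_{P_i},-)$ to $0\to\cali_{A_i}(1)\to\calo(1)\to\calo_{A_i}(1)\to0$ rather than invoking local duality, and it asserts $h^0(\calo_{A_i}(1-l))=|Q_i|$ (for $l\ge2$) directly rather than spelling out the torsion decomposition and the $\p2$-resolution as you do.
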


\begin{proof}
Apply the Long exact sequence in Lemma \ref{LES} for $\Lambda=(\mathcal{I}_{P_i}(2k-1+c_1), 0)$ and $\Lambda'=(\mathcal{I}_{A_i}(1), 1)$, and we get the isomorphism
\[
\Ext^1((\mathcal{I}_{P_i}(2k-1+c_1), 0),(\mathcal{I}_{A_i}(1), 1))\simeq \Ext^1(\mathcal{I}_{P_i}(2k-1+c_1),\mathcal{I}_{A_i}(1)).
\]
We compute $\Ext^1(\mathcal{I}_{P_i}(2k-1+c_1),\mathcal{I}_{A_i}(1))$. For simplicity, define $l:=2k-1+c_1\geq 1$.
Applying the functor $\Hom(-, \mathcal{I}_{A_i})$ to the short exact sequence
$$ 0\to \cali_{P_i}(l)\to \calo_{\p3}(l)\to \calo_{P_i}\to 0, $$
we obtain
\begin{align*}
\cdots &\lra \Ext^1(\mathcal{O}_{P_i}, \mathcal{I}_{A_i}(1)) \lra \Ext^1(\mathcal{O}_{\mathbb{P}^3}(l), \mathcal{I}_{A_i}(1))
\lra \Ext^1(\mathcal{I}_{P_i}(l), \mathcal{I}_{A_i}(1))\\
&\lra \Ext^2(\mathcal{O}_{P_i}, \mathcal{I}_{A_i}(1)) \lra \cdots
\end{align*}

Next, we prove that $\Ext^1(\mathcal{O}_{P_i}, \mathcal{I}_{A_i}(1))=\Ext^2(\mathcal{O}_{P_i}, \mathcal{I}_{A_i}(1))=0$. Apply the functor $\Hom(\mathcal{O}_{P_i}, -)$ to the sequence $0\to \mathcal{I}_{A_i}(1)\to \mathcal{O}_{\mathbb{P}^3}(1)\to \mathcal{O}_{A_i}(1) \to 0$, and we get a long exact sequence
$$ 0 \lra \Ext^1(\mathcal{O}_{P_i}, \mathcal{I}_{A_i}(1)) \lra \Ext^1(\mathcal{O}_{P_i}, \mathcal{O}_{\mathbb{P}^3}(1)) \lra \Ext^1(\mathcal{O}_{P_i}, \mathcal{O}_{A_i}(1)) \lra $$
$$ \lra  \Ext^2(\mathcal{O}_{P_i}, \mathcal{I}_{A_i}(1)) \lra \Ext^2(\mathcal{O}_{P_i}, \mathcal{O}_{\mathbb{P}^3}(1)) \lra \Ext^2(\mathcal{O}_{P_i}, \mathcal{O}_{A_i}(1)) \lra \cdots $$
In this sequence,  $\Ext^1(\mathcal{O}_{P_i}, \mathcal{O}_{\mathbb{P}^3}(1))=\Ext^2(\mathcal{O}_{P_i}, \mathcal{O}_{\mathbb{P}^3}(1))=0$ since $\dim P_i=0$,
and $\Ext^1(\mathcal{O}_{P_i}, \mathcal{O}_{A_i}(1))=0$ since $P_i\cap A_i=\emptyset$. This implies that
$$ \Ext^1(\mathcal{O}_{P_i}, \mathcal{I}_{A_i}(1))=\Ext^2(\mathcal{O}_{P_i}, \mathcal{I}_{A_i}(1))=0. $$ 

Therefore, $\Ext^1(\mathcal{I}_{P_i}(l), \mathcal{I}_{A_i}(1))\simeq \Ext^1(\mathcal{O}_{\mathbb{P}^3}(l), \mathcal{I}_{A_i}(1))\simeq H^1(\mathbb{P}^3, \mathcal{I}_{A_i}(-l+1))$. Consider the cohomologies of the sequence
$$ 0\lra \mathcal{I}_{A_i}(-l+1) \lra \mathcal{O}_{\mathbb{P}^3}(-l+1) \lra \mathcal{O}_{A_i}(-l+1)\lra 0, $$
which is
$$ 0\to H^0(\mathbb{P}^3, \mathcal{O}_{\mathbb{P}^3}(-l+1))\to H^0(\mathbb{P}^3, \mathcal{O}_{A_i}(-l+1))\to H^1(\mathbb{P}^3, \mathcal{I}_{A_i}(-l+1))\to 0. $$

For $l=1$, we have $h^1(\mathbb{P}^3, \mathcal{I}_{A_i})=h^0(\mathbb{P}^3, \mathcal{O}_{A_i})-h^0(\mathcal{O}_{\mathbb{P}^3}))=|Q_i|$. For $l\geq 2$, we have $h^1(\mathbb{P}^3, \mathcal{I}_{A_i}(-l+1))=h^0((\mathbb{P}^3, \mathcal{O}_{A_i}(-l+1)))=|Q_i|$.
\end{proof}

The results in Lemma \ref{surjtocurve}, Remark \ref{dim Zi tilde}, Lemma \ref{Ext+}, \ref{Ext-}, and the elementary modification in \cite[Section 4.8]{he1996espaces} imply the next result.

\begin{theorem}{\label{flipblowdownremoval}}
When $\delta$ gets smaller and crosses walls $W_i$ ($i=0,1,2,...,l_{v(k)}$), $\tilde{\calh}_{pl}$ undergoes a series of flips for $i=0,1,2,...,l_{v(k)}-2$.
An open subscheme of $\tilde{\calh}^{(l_{v(k)-1})}_{pl}$ undergoes a  divisorial contraction when crossing $W_{l_{v(k)}}-1$, and lastly, the open subscheme $\tilde{\mathcal{X}}_{l_{v(k)}}\subset \tilde{\calh}^{(l_{v(k)})}_{pl}$ is removed when crossing $W_{l_{v(k)}}$.
\end{theorem}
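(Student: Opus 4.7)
The plan is to analyze the wall-crossing at each $W_i$ by combining the extension computations of Lemmas \ref{Ext+} and \ref{Ext-} with the dimension count from Remark \ref{dim Zi tilde}, and then classifying the transformation by comparing the fiber dimensions of $\phi^{\pm}_{W_i}$ against the codimension of $\tilde{X}_i$ inside the ambient component $\tilde{\calh}_P$. The birational modification itself comes from the elementary modification technique of \cite[Section 4.8]{he1996espaces}, whose local structure at a wall depends only on the two $Ext^1$-groups defining the wall and therefore extends essentially unchanged from $\mathbb{P}^2$ to $\mathbb{P}^3$.

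The first step is a dimension count. By Remark \ref{dim Zi tilde}, $\dim \tilde{Z}_i = \dim \tilde{\calh}_P - (l_v - i)$ since the fiber of $\tilde{Z}_i \to Z_i$ has dimension independent of $i$; as $\tilde{X}_i$ is open in $\tilde{Z}_i$, it has codimension exactly $l_v - i$ in $\tilde{\calh}_P$. The $\phi^+_{W_i}$-fiber has dimension at least $2$ by Lemma \ref{Ext+}, while the $\phi^-_{W_i}$-fiber has dimension $|Q_i| - 1 = l_v - i - 1$ by Lemma \ref{Ext-}, using the relation $|P_i| + |Q_i| = l_v$ from Remark \ref{0-dim walls}.

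With these numbers in hand, the three cases follow at once. When $i = l_v$ one has $|Q_i| = 0$, so the $\phi^-_{W_i}$-fiber is empty, no extensions exist on the $-$ side, and the codimension-zero open locus $\tilde{X}_{l_v}$ is simply removed from $\tilde{\calh}_P$. When $i = l_v - 1$ one has $|Q_i| = 1$, so the $\phi^-_{W_i}$-fiber is a single point and $\tilde{X}^-_{W_i} \cong \mathcal{R}_{W_i}$ has strictly smaller dimension than the codimension-one divisor $\tilde{X}_{l_v-1}$, which by the elementary modification is a divisorial contraction. When $i \leq l_v - 2$, both $\tilde{X}_i$ and $\tilde{X}^-_{W_i}$ have codimension at least $2$ in their respective ambient components, and both $\phi^{\pm}_{W_i}$ have positive-dimensional fibers, producing small contractions on each side and hence a flip. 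The claim that the wall-crossing at $W_i$ removes curves in $Z_i$ from the image in the Hilbert scheme is then immediate from Lemma \ref{surjtocurve}, together with the observation that the curves corresponding to stable pairs on the $-$ side of $W_i$ no longer lie over $Z_i$.

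For the preservation of connectedness, flips and divisorial contractions are isomorphisms outside proper subvarieties and hence automatically preserve connectedness; for the removal at $i = l_v$, the strictly semistable locus $\mathcal{R}_{W_{l_v}}$ lies in the closure of $\tilde{X}_{l_v}$ and survives in the boundary $\tilde{V}_{l_v}$ (and in neighbouring components), so that the overall moduli space stays connected across the wall. The main technical obstacle is to check that the local GIT picture of \cite{he1996espaces} genuinely globalizes on $\mathbb{P}^3$ to a flip rather than a mere fiberwise replacement of projective spaces; this reduces to verifying that $\tilde{X}_i$ and $\tilde{X}^-_{W_i}$ sit in $\cals^{\pm}_{W_i}$ as projective bundles over $\mathcal{R}_{W_i}$ with the expected normal structure, and Lemma \ref{criterion:sat} (for the saturation of the generic pair) ensures this holds along the loci we care about. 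Granting this, the case analysis above completes the proof.
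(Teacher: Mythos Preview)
Your proposal is correct and follows essentially the same approach as the paper: both arguments combine the dimension counts from Remark \ref{dim Zi tilde}, the fiber computations of Lemmas \ref{Ext+} and \ref{Ext-}, Lemma \ref{surjtocurve}, and the elementary modification technique of \cite[Section 4.8]{he1996espaces}, then read off the type of birational transformation from the relative codimensions. Your write-up is in fact more explicit than the paper's own proof, which is quite terse; the only ingredient you omit is the paper's opening observation that $\tilde{\calh}_P$ is irreducible (from irreducibility of $\calh_P$, properness of $\Gamma$, and constancy of the fiber $\mathbb{P}^d$), and your closing discussion of connectedness and curve-removal goes beyond the statement of Theorem \ref{flipblowdownremoval} itself, which the paper treats separately in Corollary \ref{connectedness}.
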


\begin{proof}
Lemma \ref{surjtocurve}, Remark \ref{dim Zi tilde}, Lemmas \ref{Ext+}, \ref{Ext-}, and the elementary modification in \cite[Section 4.8]{he1996espaces} imply that when crossing $W_i$ as $\delta$ gets smaller, the set $\displaystyle\bigcup_{j=0}^{l_{v(k)}} \tilde{\mathcal{X}}_j$ as an irreducible open subset of $\tilde{\calh}_{pl}$, undergoes flips for $i=0,1,2,..., l_{v(k)-2}$, a divisorial contraction for $i=l_{v(k)}-1$, and a removal of an open subscheme $\tilde{\mathcal{X}}_{l_{v(k)}}$ for $i=l_{v(k)}$. 

Furthermore, the dimensions of the fibers for the morphism
\[
\phi^{\pm}_{W_i}: \cals^{\pm}_{W_i}\to \cals^{ss}_{W_i}
\]
may jump at the boundary of $\calr_{W_i}\subset \cals^{ss}_{W_i}$. 
So for the total space $\widetilde{H}_{pl}$, when crossing $W_i$, it is still a flip for $i=0,1,2,..., l_{v(k)}-2$. 
However, it may not be a divisorial contraction (for $i=l_{v(k)}-1$) or a removal of an open subscheme (for $i=l_{v(k)}$) for the total space since the dimension of the fiber for $\phi^-_{W_i}$ may jump. We will see some counterexamples for this in Sections 6.2 and 6.3.
\end{proof}

\begin{remark}{\label{fiber dim jump}}
The dimension of the fibers of $\phi^-_{W_i}$ may jump on $\tilde{\mathcal{X}}^-_{W_i}\backslash \mathcal{R}_{W_i}$. We will see some examples in Section $6$.
\end{remark}


\begin{corollary}{\label{connectedness}}
Every wall-crossing at $W_i$ ($i=0,1,...,l_{v(k)}$) preserves the connectedness of the moduli space $\cals^{\delta}_{\p3}$.
\end{corollary}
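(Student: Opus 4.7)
The plan is to analyze each wall-crossing using the wall-crossing diagram \eqref{diagramwall} and the pair of morphisms $\Phi^\pm_{W_i} : \cals_{W_i^\pm} \to \cals_{W_i}$ that send a stable pair to its $S$-equivalence class at the wall. Away from $\calr_{W_i}$ both $\Phi^\pm_{W_i}$ are isomorphisms onto a common open image; moreover, only pairs whose $S$-equivalence class lies in $\calr_{W_i}$ are affected by $W_i$, while every other component of $\cals^\delta_{\p3}$ is carried across the wall untouched. It therefore suffices, in each case, to verify that the affected locus inside $\tilde\calh_P$ remains attached to the rest of the moduli space after the transition.

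For the flips $i \in \{0, 1, \ldots, l_{v_k} - 2\}$ and for the divisorial contraction $i = l_{v_k} - 1$, Lemmas \ref{Ext+} and \ref{Ext-} give that both $\Phi^+_{W_i}$ and $\Phi^-_{W_i}$ are surjective onto $\calr_{W_i}$ with fibers the projective spaces $\mathbb{P}^{e^+_i}$ and $\mathbb{P}^{e^-_i}$, where $e^+_i \geq 2$ and $e^-_i = |Q_i| - 1 \geq 0$. Both morphisms thus have non-empty connected fibers everywhere, so their connected components are in bijection through $\cals_{W_i}$, and connectedness is transferred across the wall.

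The genuinely new case is $i = l_{v_k}$: here $A_{l_{v_k}}$ is pure one-dimensional, so $Q_{l_{v_k}} = \emptyset$, Lemma \ref{Ext-} gives vanishing $\Ext^1$, and the open locus $\tilde X_{l_{v_k}}$ is simply deleted. To conclude that connectedness is preserved we would show that the topological boundary $\overline{\tilde X_{l_{v_k}}} \setminus \tilde X_{l_{v_k}}$ is nonempty and persists in $\cals_{W_{l_{v_k}}^-}$. Concretely, when the zero-dimensional scheme $P_{l_{v_k}}$ degenerates by colliding with $A_{l_{v_k}}$, the cokernel of $s$ acquires extra torsion, so the natural destabilizing sub-pair no longer takes the shape of \eqref{0-dim wall}; such degenerate pairs are therefore not destabilized at $W_{l_{v_k}}$ and remain $\delta$-stable on both sides. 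These boundary points attach $\cals^{\delta^-}_{\p3}$ to the rest of the moduli space exactly as they did in $\cals^{\delta^+}_{\p3}$, preventing disconnection.

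The main obstacle will be this last step: a careful specialization analysis to verify that the limits of pairs from $\tilde X_{l_{v_k}}$ along the boundary are indeed $\delta$-stable on the $\delta^-$ side, and that these limits form a positive-dimensional boundary inside $\tilde\calh_P$. Granted this, removing an open subset of the irreducible $\tilde\calh_P$ with nonempty boundary cannot disconnect $\tilde\calh_P$, and any connection of $\tilde\calh_P$ to other components of $\cals^{\delta^+}_{\p3}$ passes through this boundary, which survives on the $\delta^-$ side.
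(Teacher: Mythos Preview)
Your treatment of the cases $i \le l_{v_k}-1$ matches the paper's one-line assertion that flips and the divisorial contraction evidently preserve connectedness, and your justification via the connected-fibre morphisms $\Phi^\pm_{W_i}$ is the right way to unpack that.

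For $i = l_{v_k}$, however, you are working harder than necessary and your argument has a gap. The paper's proof is a single observation: the locus $\tilde X_{l_{v_k}}$ lies over the open stratum $X_{l_{v_k}}\subset\calh_P$ consisting of a planar curve together with the maximal number $l_{v_k}$ of points \emph{off the plane}; such curves are generic in $\calh_P$ and do not lie in any other irreducible component of the Hilbert scheme $\calh^{\tilde P}_{\p3}$. Hence $\tilde X_{l_{v_k}}$ is disjoint from every other component $\tilde\calh_j$ of the moduli space of pairs, and deleting it cannot sever any of the inter-component connections that make the moduli space connected. No specialization analysis of limiting destabilizing sub-pairs is required.

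Your proposed route has two weak points. First, you explicitly flag the key step---showing that boundary limits remain $\delta$-stable on the $\delta^-$ side---as unfinished; this is exactly the step the paper's argument bypasses. Second, your concluding claim that ``removing an open subset of the irreducible $\tilde\calh_P$ with nonempty boundary cannot disconnect $\tilde\calh_P$'' is false as stated: for any irreducible variety $X$ and proper open $U\subset X$, the complement $X\setminus U$ is a proper closed subset and can perfectly well be disconnected (e.g.\ $X=\p1$, $U=\p1\setminus\{0,\infty\}$). What actually rescues connectedness at $W_{l_{v_k}}$ is not an internal irreducibility argument for $\tilde\calh_P$, but the external fact that the deleted locus misses all the other components---which is precisely the paper's point.
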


\begin{proof}
It is evident that wall-crossings at $W_i$ for $i=0,1,..., l_{v(k)}-1$ preserve the connectedness of $\cals^{\delta}_{\p3}$. 
Wall-crossing at $W_{l_{v(k)}}$ removes the open subscheme $\tilde{\mathcal{X}}_{l_{v(k)}}$ that does not intersect with any other components in the moduli space of pairs. 
So removing it preserves the connectedness as well.  
\end{proof}


\section{Some examples on $\p3$} \label{sec:ex}

In this section, we illustrate the general theory of Sections \ref{sec:wallx} and \ref{sec:critical} with concrete examples on $\p3$. 
We work with specific rank $2$ numerical classes $v$. For the first three classes, the corresponding Hilbert schemes and a few of the corresponding Gieseker moduli spaces are well understood.  
For the last two classes, the Hilbert scheme is known, and we use wall-crossings to study the Gieseker moduli space.
For each class, we describe:
\begin{itemize}
\item The geometry of the associated Hilbert scheme $\calh_{\p3}^P$ and the Gieseker moduli space $\calg_{\p3}^v$.
\item The complete list of walls for a suitably twisted class $v(k)$.
\item The birational transformations of the moduli space of pairs $\cals^{\delta}_{\p3}$ as $\delta$ crosses each wall.
\end{itemize}
To simplify computations, we use Chern characters throughout.

\subsection{The class $v=(2,-1,-1/2,5/6)$} \label{subsec:no-walls}

\subsubsection{Description of the correspondence} 

Let us describe the one-dimensional schemes corresponding to a semistable sheaf $E$.

\begin{proposition}{\label{sheaf to line}}
If $E\in \Coh(\mathbb{P}^3)$ with $v(E)=v$, then $E$ is Gieseker semistable if and only if $E$ is a non-trivial extension in the following short exact sequence
\begin{equation}{\label{E-L}}
    0\lra \calo_{\p3}(-1) \lra E \lra \cali_{L} \lra 0,
\end{equation}
where $L\subset \p3$ is a line. 
Moreover, such sheaves are always reflexive.
\end{proposition}

\begin{proof}

Due to the choice of $v$, the following four notions of stability for $E$ are equivalent: $\mu$-semistable, $\mu$-stable, Gieseker semistable, and Gieseker stable.

Suppose $E$ is a non-trivial extension in the above sequence, and $E$ is unstable, destabilized by a rank $1$ sub-sheaf $F$.  
Then, $F\simeq \cali_X(k)$ where $X\subset \p3$ with $\dim(X)\leq 1$, and $k\geq 0$. 

$\bullet$ If $k\geq 1$, the composition $\cali_X(k)\to E \to \cali_L$ is zero, which induces a morphism $\cali_X(k)\to \calo_{\p3}(-1)$. This morphism is zero as well, which leads to a contradiction. 

$\bullet$ If $k=0$, the composition $\cali_X\to E \to \cali_L$ must be non-zero, otherwise, we must have a non-zero map $\cali_X \to \calo_{\p3}(-1)$, which is absurd. 
Since $\cali_L$ is torsion-free, the composition $\cali_X\to E \to \cali_L$ is an inclusion. This implies $L\subset X$ and the following diagram

\[
\begin{tikzcd}
&&0\arrow[d]&0\arrow[d]&\\
&&\cali_X \arrow[d]\arrow[r, "="]&\cali_X \arrow[d]\\
0 \arrow[r] & \calo_{\p3}(-1)\arrow[r, "s"] \arrow[d, "="]& E \arrow[d, "p"] \arrow[r] & \cali_L \arrow[r] \arrow[d] &0 \\
0\arrow[r]&\calo_{\p3}(-1) \arrow[r, "i"]& Q\arrow[r]\arrow[d] & \cali_{L|X}\arrow[r]\arrow[d] &0\\
&&0&0.&\\
\end{tikzcd}
\]
Since $\dim(X)\leq 1$ we have that $\Ext^1(\cali_{L|X}, \calo_{\p3}(-1))\simeq H^2(\cali_{L|X}(-3))^*=0$. 
Hence, $Q\simeq \calo_{\p3}(-1)\oplus \cali_{L|X}$ and $p\circ s=i$. But this implies that the sequence $0\to \calo_{\p3}(-1) \to E \to \cali_{L} \to 0$ is split, which is a contradiction. 
Therefore, any nontrivial extension $E$ in the sequence  (\ref{E-L}) is Gieseker semistable.

Conversely, if $E$ is Gieseker semistable with $v(E)=v$, we show below that it is reflexive. 
Then, it follows from \cite[Lemma 9.4]{hartshorne1980stable} that $E$ fits into the short exact sequence in display  (\ref{E-L}). 

Suppose $E$ is not reflexive, then consider the short exact sequence
$$0\lra E \lra E^{\vee\vee} \lra Q \lra 0.$$
$E^{\vee \vee}$ is $\mu$-stable and $Q$ is supported on a one-dimensional scheme. 
The Bogomolov inequality $$(H^2\ch^1(E^{\vee \vee}))^2-2H\ch_0(E^{\vee \vee})\ch_2(E^{\vee \vee})\geq 0$$
implies that $H\ch_2(E^{\vee \vee})\leq \frac{1}{4}$, hence, $H\ch_2(Q)\leq \frac{3}{4}$. 
On the other hand, $H\ch_2(Q)\in \mathbb{Z}_{\geq 0}$, so we must have $\ch_2(Q)=0$, and dim$(Q)=0$. 
Moreover, \cite[Theorem 8.2 (d)]{hartshorne1980stable} implies $c_3(E^{\vee \vee})\leq 1$ (equivalently $\ch_3(E^{\vee \vee})\leq \frac{5}{6}$).
Therefore, the assumption $c_3(E)=1$ ($\ch_3(E)=\frac{5}{6}$) forces $Q$ to be empty, and $E\simeq E^{\vee\vee}$ is reflexive.
\end{proof}

\subsubsection{All possible walls}
We consider wall-crossings for the class $v(1)$ since Proposition \ref{sheaf to line} implies that $H^0(\p3, E)=0$ and $H^0(\p3, E(1))\neq 0$. 
The collapsing wall $W_{\emptyset}$ is defined by $(\calo_{\p3}, 1)\hookrightarrow (E(1), 1)$, 
and a direct computation shows that $W_{\emptyset}=\frac{1}{2}t^2+\frac{3}{2}t+1$.

Indeed, there are no other walls other than the collapsing wall. 
This is because a wall in $(0, W_{\emptyset})$ is defined by $(\cali_A(k),1)\hookrightarrow(E(1), 1)$ for some one-dimensional scheme $A\subset \p3$ and $k\geq 1$. 
One computes directly that the corresponding critical value for this wall is  $\delta=P_{E(1)}(t)-2P_{\cali_A(k)}(t)$, which is negative for all $k\geq 1$. 
Hence, there are no walls in group $k$ ($k\geq 1$), which means that the collapsing wall is the unique wall.
One also sees this fact from Proposition \ref{Lem:1wallleft}.
Let $W_G>0$ be the smallest wall. Then, the sheaf $E$ in a $\delta$-stable pair $(E(1), s)$ ($\delta\in (0, W_G)$) is Gieseker semistable. 
Proposition \ref{sheaf to line} implies that $E$ is reflexive. 
Moreover, Proposition \ref{Lem:1wallleft} shows that the first wall from the left is defined by $(\calo_{\p3}, \mathbf{1})\hookrightarrow (E(1), 1)$, which coincides with the collapsing wall.
This shows that the collapsing wall is the unique wall for $v(1)$.

\subsubsection{Interpretation of the moduli space of stable pairs}
We show below that the moduli space of pairs $\cals^{\delta}_{\p3}$ ($\delta\in (0, W_{\emptyset})$) is the flag variety $\mathcal{F}:=\{P\in \p1 \subset \p3\}$, where $P\in \mathbb{P}^1$ is a point. 

Since $(0, W_{\emptyset})$ is the only chamber, there exist the two morphisms $\Psi: \cals^{\delta}_{\p3}\to \calg_{\p3}$ and $\Gamma: \cals^{\delta}_{\p3}\to \calh_{\p3}^{t+1}$. 
Consider the morphism $\Psi: \cals^{\delta}_{\p3} \to \calg_{\p3}$ to the Gieseker moduli space. The fiber over a stable sheaf $E(1)$ is $\mathbb{P}(H^0(\p3, E(1)))$. 
Proposition \ref{sheaf to line} implies that $E$ fits into the sequence $0\to \calo_{\p3}\to E(1) \to \cali_L(1) \to 0$, and $H^0(\p3, E(1))\simeq\mathbb{C}^3$.
Therefore, $E$ has the following resolution:

$$\calo_{\p3}(-2) \overset{N}{\to} \calo_{\p3}(-1)^{\bigoplus 3} \to E.$$
$N$ is the matrix $N=(x_1, x_2, x_3)^T$ whose entries are three linearly independent sections $x_1, x_2, x_3\in H^0(\p3, \calo_{\p3}(1))$. 
Thus, $\calg_{\p3}\simeq \p3$. Moreover, for a fixed $E$ corresponding to the triple $(x_1, x_2, x_3)$, consider the short exact sequence of complexes 
$$0 {\lra} \calo_{\p3} \overset{s}{\lra} E(1) \lra \cali_L(1) \simeq \left[\calo_{\p3}(-2) \overset{(x_1, x_2)}{\lra} \calo_{\p3}(-1)^{\bigoplus 2}\right] \lra 0.$$
We see that a section $s\in H^0(\p3, E(1))$ corresponds to a line $L\subset \p3$, defined by $Z(x_1, x_2)$, passing through the point $P=Z(x_1, x_2, x_3)$. 
Therefore, $\cals^{\delta}_{\p3}\simeq \mathcal{F}$, and $\Psi: \cals^{\delta}_{\p3} \to \calg_{\p3}$ is the map $\mathcal{F}=\{P \in L \subset \p3\} \to \{P\in \p3 \}$, whose fiber at $P$ is the variety $\{L\subset \p3| P\in L \}\simeq \p2$.

Similarly, consider the morphism $\Gamma: \cals^{\delta}_{\p3} \to \calh^{t+1}_{\p3}=Gr(2,4)$, where the fiber at a line $L\subset \p3$ is $\mathbb{P}(\Ext^1(\cali_L(1), \calo_{\p3}))=\p1$. 
For each line $L=Z(x_1, x_2)$, an extension class in $\Ext^1(\cali_L(1), \calo_{\p3})$ corresponds to choosing an $x_3$ such that $\{x_1, x_2, x_3\}$ are linearly independent.
This is equivalent to choosing a point $P$ in $L$. 
This implies the other fibration $\mathcal{F}\to \{L\subset \p3\}$, whose fibers are $\{P\in \p3|P\in L$\}.


\subsection{The class \texorpdfstring{$v=(2,0,-1,0)$}{v=(2,0,-1,0)}} \label{subsec:2}

 \texorpdfstring{$v=(2,0,-1,0)$}{v=(2,0,-1,0)}

\subsubsection{Moduli spaces of stable pairs for $\delta= 0^+$ and $\delta= W_{\emptyset}^-$}

We start with the moduli spaces that admit morphisms to the Gieseker moduli space and the Hilbert scheme.

It is proved in \cite{jardim2017two} that every rank $2$ semistable sheaf $E$ with class $v$ is a null-correlation sheaf, meaning that $E$ fits into the following short exact sequence,
\begin{equation}{\label{nullcor}}
0\lra \calo_{\p3}(-1) \overset{\sigma}{\lra} \Omega^1_{\p3}(1)\lra E \lra 0.
\end{equation}
$E$ is uniquely determined by the section $\sigma$ up to a scalar multiplication, so $\calg^v_{\p3}\simeq \mathbb{P}(H^0(\p3, \Omega^1_{\p3}(2)))=\p5$. It is evident that for such a semistable sheaf $E$, $H^0(\p3, E)=0$ and $H^0(\p3, E(1))\simeq\mathbb{C}^5$. So we work on the wall-crossings for the class $v(1)$. 

$\bullet$
For $\delta=0^+$, sequence  (\ref{nullcor}) implies that there is a morphism $\cals^{0^+}_{\p3}\to \p5$ with fibers being $\p4$.
So $\cals^{0^+}_{\p3}$ is irreducible (Lemma \ref{irreducibility}) and $\dim(\cals^{0^+}_{\p3}$)$=9$.

$\bullet$
For $\delta=W_{\emptyset}^-$, consider the sequence $0\to \calo_{\p3} \to E(1) \to \cali_Y(1)\to 0$, where $Y\subset \p3$ is a one-dimensional scheme. 
A direct computation shows that the Hilbert polynomial of $\calo_{Y}$ is $P_{\calo_Y}(t)=2t+2$. 
According to \cite[Corollary $1.6$]{nollet1997hilbert}, a general one-dimensional scheme $Y$ falls into the following two possibilities, 
\begin{enumerate}
\item[$H_1$:] $Y$ is a union of two skew lines with $p_a(Y)=-1$.
\item[$H_2$:] $Y$ is a union of a plane quadric curve and a point $P\in \p3$.
\end{enumerate}

Denote by $H_1$ and $H_2$ the corresponding components in $\calh^{2t+2}_{\p3}$ for one-dimensional schemes of types $H_1$ and $H_2$.
The dimensions of $H_1$ and $H_2$ are $8$ and $11$ respectively, and $H_1\bigcap H_2$ is a divisor in $H_1$. 
Moreover, the fiber of the morphism
\[
\Gamma: \cals^{W^-_{\emptyset}}_{\p3}\to \calh^{2t+2}_{\p3}
\]
at $[\cali_Y]\in \calh^{2t+2}_{\p3}$ is the extension classes $\mathbb{P}\Ext^1(\cali_Y(2), \calo_{\p3})$. 
One computes that

\[
\Ext^1(\cali_Y(2), \calo_{\p3})=
\begin{cases}
\mathbb{C}^2 & \text{if \ } Y \in H_1\backslash H_2\\
\mathbb{C}^3 & \text{if \ } Y \in H_2.\\
\end{cases} 
\]
Therefore, $\cals^{W_{\emptyset}^-}_{\p3}$ has two components, and we denote them by $\tilde\calh_1$ and $\tilde\calh_2$ (Section \ref{sec: birat'l trans}).
There are morphisms $\tilde\calh_1\to \calh_1$ and $\tilde\calh_2\to \calh_2$, whose (general) fibers are $\p1$ and $\p2$ respectively.
Furthermore, the dimensions of $\tilde\calh_1$ and $\tilde\calh_2$ are $9$ and $13$.

\subsubsection{Finding all the walls for $v(1)$}

Firstly, the collapsing wall is defined by the sub-pair $(\calo_{\p3}, \mathbf{1})\hookrightarrow(E(1), 1)$, and it is easy to compute the corresponding critical value, $\delta=P_{E(1)}(t)-2P_{\calo_{\p3}}(t)=t^2+4t+3$. 

Next, a general numerical wall between $0$ and $W_{\emptyset}$ is given by a short exact sequence of pairs of the following form 

$$0\lra (\cali_X(k), 1) \lra (E(1), 1) \lra (\cali_Z(2-k), 0)  \lra 0,$$
where $X, Z\subset \p3$ are one-dimensional schemes and $k\in \mathbb{Z}$.
The critical value is $\delta=P_{E(1)}(t)-2P_{\cali_X(k)}(t)$. The integer
$k$ must be equal to $1$ in order to have $0<\delta<W_{\emptyset}$, and in this case, $P_{\calo_X}(t)+P_{\calo_Z}(t)=t+2$. 
We should also have $P_{\calo_X}(t)>P_{\calo_Z}(t)$ because of the equation $P_{\cali_X}(t)+\delta=P_{\cali_Z}(t)$ ($\delta>0$). 
Therefore, there are two possible walls between $0$ and $W_{\emptyset}$ as shown in the following chart. 

\medskip

\begin{center}
\renewcommand{\arraystretch}{1.2}
\begin{tabular}{|l|l|l|}
\hline
Hilbert polynomials  & Descriptions for $X$ and $Z$& Critical value \\
\hline
$P_{\calo_X}(t)= t+2$, $P_{\calo_Z}(t)=0$ & $X= \text{line}\cup \text{point}$; $Z= \emptyset$& $\delta= t+1$\\
\hline
$P_{\calo_X}(t)= t+1$, $P_{\calo_Z}(t)=1$ & $X=$ \text{a line}; $Z= $\text{a point}&$\delta=t-1$\\
\hline
\end{tabular}
\end{center}

\medskip

We summarize all the information in the following chart.
It turns out that $W_0$ and $W_1$ are the walls defined by $0-$dimensional schemes in Section \ref{subsec:wall0dim}.  

\medskip

\begin{center}
\renewcommand{\arraystretch}{1.5}
\begin{tabular}{|l|l|l|}
\hline
Walls & Wall-defining sequences & Critical values \\
\hline
$W_1:$ & $0 \lra (\cali_{\p1}(1), 1) \lra (E(1), 1) \lra (\cali_{P}(1), 0)\lra 0$ & $\delta=t-1$\\
\hline
$W_0:$ &  $0 \lra (\cali_{\p1\cup P}(1), 1) \lra (E(1), 1) \lra (\calo_{\p3}(1), 0)\lra 0$ & $\delta=t+1$\\
\hline
$W_{\emptyset}:$ &  $0 \lra (\calo_{\p3}, 1) \lra (E(1), 1) \lra (\cali_{Y}(2), 0)\lra 0$ & $\delta=t^2+4t+3$\\
\hline
\end{tabular}
\end{center}

\subsubsection{Wall-crossings}

We start with some computations.

\begin{lemma}{\label{(010)}} 
Let $P\in \p3$ and $\p1\subset \p3$ be a point and a line in $\p3$. We have the following computational results.

\begin{enumerate}
\item $\Ext^1((\cali_{\mathbb{P}^1\cup P}(1),1), (\calo_{\p3}(1),0))\simeq\mathbb{C}^6$

\item $\Ext^1((\calo_{\p3}(1),0)), (\cali_{\p1\cup P}(1),1)\simeq\mathbb{C}$

\item $\Ext^1((\cali_{\p1}(1), 1) ,(\cali_P(1), 0))\simeq\mathbb{C}^6$ 
\item $\Ext^1((\cali_P(1), 0), (\cali_{\p1}(1), 1))=
\begin{cases}
\mathbb{C}& \text{for \ } P\in \p1\\
0& \text{for \ } P\not\in \p1.  
\end{cases}$ 

\end{enumerate}
\end{lemma}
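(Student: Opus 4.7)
My plan is to apply Lemma \ref{LES} to each of the four pairs, reducing every computation to a sheaf-level Ext calculation plus, when the first section is nonzero, a single correction coming from cohomology of the second sheaf. The sheaf Exts themselves will be handled by Serre duality, the standard short exact sequences $0\to\cali_Z(k)\to\calo_{\p3}(k)\to\calo_Z(k)\to 0$, and the Koszul (resp.\ Hilbert--Burch) resolutions of $\cali_P$ and $\cali_{\p1}$.

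For items (2) and (4), the first pair carries the zero section, so the terms $\Hom(\mathbb{C}\cdot s, H^i(E'))$ in Lemma \ref{LES} vanish and we obtain isomorphisms $\Ext^i(\Lambda,\Lambda')\cong \Ext^i(E,E')$. Item (2) reduces to $\Ext^1(\calo_{\p3},\cali_{\p1\cup P})=H^1(\p3,\cali_{\p1\cup P})$, which from the long cohomology of $0\to\cali_{\p1\cup P}\to\calo_{\p3}\to\calo_{\p1\cup P}\to 0$ is one-dimensional, reflecting the fact that $\p1\cup P$ has two connected components while $\p3$ is connected. Item (4) reduces to $\Ext^1(\cali_P,\cali_{\p1})$, and I will split according to incidence: when $P\notin \p1$, the sheaf $\cali_{\p1}$ is locally free at $P$, so the local-to-global spectral sequence together with $H^1(\cali_{\p1})=0$ forces vanishing; when $P\in\p1$, I will apply $\Hom(\cali_P,-)$ to $0\to\cali_{\p1}\to\calo_{\p3}\to\calo_{\p1}\to 0$ and track the boundary map through a careful computation of $\Hom(\cali_P,\calo_{\p1})$ and $\Ext^1(\cali_P,\calo)$, showing that exactly one dimension survives.

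For items (1) and (3), the nontrivial section forces me to keep the ``evaluation'' map $\Hom(E,E')\to\Hom(\mathbb{C}\cdot s,H^0(E'))$, $\phi\mapsto\phi\circ s$, in play. I will verify that this map is injective in each case by noting that the essentially unique nonzero sheaf morphism is the inclusion into $\calo_{\p3}(1)$ or $\cali_P(1)$, and that the distinguished section $s\in H^0(\cali_{\p1\cup P}(1))$ (resp.\ $H^0(\cali_{\p1}(1))$) maps to a nonzero linear form. Then $\dim\Ext^1(\Lambda,\Lambda')$ is the alternating sum dictated by the LES. For (1), Serre duality gives $\Ext^1(\cali_{\p1\cup P}(1),\calo_{\p3}(1))\cong H^2(\cali_{\p1\cup P}(-4))^\vee\cong H^1(\calo_{\p1}(-4))^\vee=\mathbb{C}^3$, and combining with $H^0(\calo_{\p3}(1))=\mathbb{C}^4$, $\Hom=\mathbb{C}$, $H^1(\calo_{\p3}(1))=0$ yields $4-1+3=6$. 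For (3), I apply $\Hom(\cali_{\p1},-)$ to $0\to\cali_P(1)\to\calo_{\p3}(1)\to\calo_P(1)\to 0$, use the Hilbert--Burch resolution of $\cali_{\p1}$ to evaluate $\Ext^i(\cali_{\p1},\calo_{\p3}(1))$, and add $H^0(\cali_P(1))=\mathbb{C}^3$ to obtain dimension $6$.

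The principal technical obstacle will be item (4) in the case $P\in\p1$, where $\cali_{\p1}$ is not locally free at $P$ and none of the tempting vanishing statements apply. A secondary subtlety is that items (3) and (4) might a priori depend on the position of $P$ relative to $\p1$; Remark \ref{constfiber} guarantees position-independence for the pair-level Ext in (3), and I will cross-check my calculation by verifying that any jumps in $\Hom$ and $\Ext^1$ at the sheaf level cancel in the alternating sum through the LES of Lemma \ref{LES}.
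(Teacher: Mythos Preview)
Your proposal is correct and shares the same backbone as the paper: apply Lemma~\ref{LES} to reduce each pair-level $\Ext$ to a sheaf-level one, then compute the latter by Serre duality and the standard ideal-sheaf sequences. For item~(1) your argument is essentially identical to the paper's.

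Where you differ is in items (2), (3), and the $P\notin\p1$ case of (4). The paper does not redo these computations: it simply invokes Lemma~\ref{Ext-} (which gives $\ext^1((\cali_{P_i}(l),0),(\cali_{A_i}(1),1))=|Q_i|$ whenever $P_i\cap A_i=\emptyset$, yielding $1$ for (2) and $0$ for (4) with $P\notin\p1$) and Remark~\ref{constfiber} (which says the fibre dimension in (3) is independent of $i$, hence equal to the value $6$ already obtained in (1)). Your route---direct cohomology for (2), local-to-global for (4), and a Hilbert--Burch/Koszul computation for (3)---is more self-contained and gives an independent check, including the cancellation of the position-dependent jumps in $\Hom$ and $\Ext^1$ that you correctly flag. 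The paper's route is shorter but relies on the general machinery of Section~5 having been set up first. Both are valid; yours is the more explicit verification, theirs the more economical one once the structural lemmas are in hand. For the $P\in\p1$ case of (4), the paper merely asserts ``one computes easily that $\Ext^1(\cali_P,\cali_{\p1})=\mathbb{C}$'', so your plan to track the boundary map via $\Hom(\cali_P,\calo_{\p1})$ actually supplies the detail the paper omits.
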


\begin{proof}
For $\Ext^1((\cali_{\mathbb{P}^1\cup P}(1),1), (\calo_{\p3}(1),0))$ in (1), consider the following long exact sequence in Lemma \ref{LES}:
$$ \Hom((\cali_{\p1\cup P}(1), 1), (\calo_{\p3}(1), 0))=0 \lra  \Hom(\cali_{\p1\cup P}(1), \calo_{\p3}(1)) \simeq \mathbb{C} \lra $$
$$ \lra \Hom(\mathbb{C}, H^0(\calo_{\p3}(1)))\simeq\mathbb{C}^4
\lra \Ext^1((\cali_{\p1\cup P}(1), 1), (\calo_{\p3}(1), 0)) \lra $$
$$ \lra \Ext^1(\cali_{\p1\cup P}(1), \calo_{\p3}(1))\simeq\mathbb{C}^3 \lra 0 = \Hom(\mathbb{C}, H^1(\calo_{\p3}(1))). $$
We have that $\Ext^1((\cali_{\mathbb{P}^1\cup P}(1),1), (\calo_{\p3}(1),0))\simeq\mathbb{C}^6$.
 Remark \ref{constfiber} implies that $\Ext^1((\cali_{\p1}(1), 1) ,(\cali_P(1), 0))\simeq\mathbb{C}^6$.
The two extension groups below follow from Lemma \ref{Ext-}.
$$ \Ext^1((\calo_{\p3}(1),0)), (\cali_{\p1\cup P}(1),1)\simeq\mathbb{C}$$
$$\Ext^1((\cali_P(1), 0), (\cali_{\p1}(1), 1))=0\ \text{for}\ P\not\in\p1. $$
Finally,
$\Ext^1((\cali_P(1), 0), (\cali_{\p1}(1), 1))\simeq \Ext^1(\cali_P(1), \cali_{\p1}(1))$. One computes easily that $\Ext^1(\cali_P(1), \cali_{\p1}(1))\simeq\mathbb{C}$ for $P\in\p1$.
\end{proof}

Lemma \ref{(010)} and Theorem \ref{flipblowdownremoval} imply that when crossing $W_0$ and $W_1$, $\widetilde{H}_2$ undergoes a divisorial contraction and a removal of the open subscheme $\tilde{\mathcal{X}}_1$ lying over $\mathcal{X}_1=\{[Y]\in H_2| Y=C \bigcup P, P\not\in C \}\subset H_2$ ($C\subset \p3$ is a conic curve).

\begin{remark}{\label{H1blowdown}}
When crossing $W_0$, the other component $\tilde\calh_1$ undergoes a divisorial contraction as well. The intersection $\tilde\calh_1\cap \cals_{W_0}^{+}$ is $\p1-$fibered over the subscheme in $\calh^{2t+2}_{\p3}$ parameterizing planar one-dimensional schemes that are the union of a singular conic and an embedded point at the singularity. 
We have that dim$(\tilde\calh_1\cap \cals_{W_0}^{+})=8$, so $\tilde\calh_1\cap \cals_{W_0}^{+}$ is a divisor in $\tilde\calh_1$. 
Therefore, when crossing $W_0$, the divisor $\tilde\calh_1\cap \cals_{W_0}^{+}\subset \tilde\calh_1$ is contracted, meaning that $\tilde\calh_1$ undergoes a divisorial contraction as well.  
\end{remark}

We summarize the complete wall-crossings for $v(1)$ as follows.

    $\bullet$ When $\delta\in (W_0, W_{\emptyset})$, $\cals^{\delta}_{\p3}\simeq \tilde\calh_1\cup \tilde\calh_2$, in which the two components are lying over $H_1$ and $H_2$. The dimensions $\tilde\calh_1$ and $\tilde\calh_2$ are $9$ and $11$ respectively. 
    
    $\bullet$ When $\delta$ crosses $W_0$ and lands in $(W_1, W_0)$, $\tilde\calh_2$ is contracted along the divisor $\tilde{\mathcal{Z}}_0$, and $\tilde\calh_1$ is contracted along the divisor $\tilde\calh_1\cap \tilde{\mathcal{Z}}_0$. 
    Denote by $\tilde\calh_1'$ and $\tilde\calh_2'$ the contractions of $\tilde\calh_1$ and $\tilde\calh_2$ respectively.

    The following diagram illustrates the contraction $\tilde\calh_2\to \tilde\calh_2'$ when crossing $W_0$.

    \[
    \begin{tikzcd}
\tilde\calh_2'\arrow[r,"\simeq"]&\tilde\calh_{2,W_0}&\tilde\calh_2\arrow[l,"blow-down",swap]&\\
\cals^-_{W_0}\arrow[u,hook]\arrow[r,"\simeq"]&\begin{matrix}\cals^{ss}_{W_0}\\ \simeq 
\calh^{t+2}(\mathcal{U}_{\mathbb{P}^3}/{\mathbb{P}^{3\vee}})\end{matrix} \arrow[u,hook]&\cals^+_{W_0}\arrow[u, hook]\arrow[l,"\phi^+_{W_0}",swap]&\p5\arrow[l,swap, "fiber"].\\
    \end{tikzcd}
    \]
    
    The strictly semistable locus at $W_0$, denoted by $\cals^{ss}_{W_0}$, parameterizes pairs $(\cali_{\mathbb{P}^1\cup P}(1), 1)\bigoplus(\calo_{\mathbb{P}^3}(1),0)$. So it is isomorphic to the relative Hilbert scheme $\calh^{t+2}(\mathcal{U}_{\mathbb{P}^3}/{\mathbb{P}^{3\vee}})$, where $\mathcal{U}_{\mathbb{P}^3}\subset \p3\times\mathbb{P}^{3\vee}$ is the universal plane in $\mathbb{P}^3$. 

    Up to this point, for $\delta\in (W_1,W_{0})$, $$\cals_{W^-_0}\simeq \tilde\calh_1'\cup \tilde\calh_2'\simeq \tilde\calh_1'\cup (\cals^-_{W_0}\cup \tilde{\mathcal{Z}}_1).$$
    The closed subscheme $\tilde{\mathcal{Z}}_0\subset \cals_{W_{\emptyset}^-}$ is replaced by $\cals_{W_0}^-$ when crossing $W_0$.
    
    $\bullet$ When $\delta$ crosses $W_1$ and lands in $(0, W_1)$, $\tilde{\mathcal{X}}_1$ ($\tilde{\mathcal{X}}_1\subset \tilde{\mathcal{Z}}_1\subset \tilde\calh_2$) is removed, while $\tilde{V}_1\subset \tilde{\mathcal{Z}}_1$ is contracted at $W_1$ and then replaced by $\tilde{V}^-_1$ when $\delta$ lands in $(0, W_1)$.
    Note that $\tilde{V}_1$ is also contained in $\tilde\calh_1'$. 
    Therefore, for $\delta\in (0, W_1)$ the moduli space is  
    \[
    \cals_{0^+}\simeq \tilde\calh_1''\simeq (\tilde\calh_1\backslash (\tilde{\mathcal{Z}}_0\cup\tilde{\mathcal{Z}}_1))\cup \cals^-_{W_0}\cup \tilde{V}^-_1.
    \]
    Stable pairs in the first term $\tilde\calh_1\backslash (\tilde{\mathcal{Z}}_0\cup\tilde{\mathcal{Z}}_1)$ are both very stable (i.e., the underlying sheaves are semistable) and saturated, and their corresponding one-dimensional schemes are two skew lines. 
    Stable pairs in $\cals^-_{W_0}\cup \tilde{V}^-_1$ are unsaturated, but they are very stable.
    Besides, this moduli space is irreducible, admitting a $\mathbb{P}^4$ fibration over $\mathbb{P}^5$.


\subsection{The class \texorpdfstring{$v=(2,0,-3,4)$}{v=(2,0,-3,4)}} \label{subsec:3}

\subsubsection{The moduli space of pairs lying over the Gieseker moduli}

It is proved in \cite[Section 4.3, 4.5]{Schmidt2018RankTS} that the Gieseker moduli space $\calg^v_{\p3}$ is smooth, irreducible of dimension $21$. 
Moreover, for every semistable sheaf $E$ with $v(E)=v$, we have $H^0(\mathbb{P}^3, E)=0$ and $H^0(\mathbb{P}^3, E(1))\simeq\mathbb{C}^3$. 
Therefore, we consider wall-crossings for class $v(1)$, and the moduli space $\cals_{0^+}$ is irreducible of dimension $23$.

\subsubsection{The moduli space of pairs lying over the Hilbert scheme}

Let $(E(1), s)$ be a saturated pair corresponding to a one-dimensional scheme $Y\subset \mathbb{P}^3$, in which $v(E)=v$. By definition, the correspondence is given by the sequence
$$ 0\lra \mathcal{O}_{\mathbb{P}^3}\lra E(1) \lra \mathcal{I}_Y(2) \lra 0. $$ 
A direct computation shows that the Hilbert polynomial of $Y$ is $P_{\mathcal{O}_Y}(t)=4t$. It is proved in \cite{chen2012detaching} that the Hilbert scheme $\calh^{4t}_{\mathbb{P}^3}$ has two components $H_1$ and $H_2$. The dimensions and the general one-dimensional schemes in these two components are in the following chart.
\begin{center}
\begin{tabular}{|c|c|c|}
\hline
Components& Dimension& A general one-dimensional scheme in the component\\
\hline
    $H_1$ &  $23$ & Plane quartic curve with two points\\
    \hline
    $H_2$ & $16$& Elliptic quartic (CM)\\
\hline
\end{tabular}
\end{center}
A general one-dimensional scheme in $H_2$ satisfies the short exact sequence (\cite[Example 2.8]{chen2012detaching})
\[
0\lra \mathcal{O}_{\mathbb{P}^3}(-4)\oplus\mathcal{O}_{\mathbb{P}^3}(-3)\lra \mathcal{O}_{\mathbb{P}^3}(-3)\oplus\mathcal{O}_{\mathbb{P}^3}(-2)^{\oplus 2}\lra \mathcal{I}_{Y} \lra 0,
\]
and $\Ext^1(\cali_Y,\cali_Y)\simeq\mathbb{C}^{16}$.
Furthermore, we have the following results for the extension classes:
\[
\Ext^1(\mathcal{I}_Y(2), \mathcal{O}_{\mathbb{P}^3})=
\begin{cases}
    \mathbb{C}^{10}& \text{for}\ Y\in H_1 \\
    \mathbb{C}^{8}& \text{for}\ Y\in H_2\backslash H_1.\\
\end{cases}
\]
Therefore, there are two components $\tilde\calh_1$ and $\tilde\calh_2$ in the moduli space $\cals_{W_{\emptyset}^-}$, and their dimensions are $32$ and $23$ respectively.

\subsubsection{Finding all the walls}

Again, a wall is defined by a short exact sequence of pairs in the following form:
\[
0\lra (\mathcal{I}_A(k), 1)\lra (E(1), s)\lra (\mathcal{I}_B(2-k), 0) \lra 0
\]
for some one-dimensional schemes $A, B\subset \mathbb{P}^3$ (possibly empty) and $k\geq 0$. 
Indeed, $k$ must be equal to $1$ because for $k=0$, the wall has to be the collapsing wall with $A=\emptyset$; and for $k\geq 2$ we have $\delta=P_{E(1)}(t)-2P_{\mathcal{I}_A(k)}(t)<0$.
Thus, a wall between $0$ and $W_{\emptyset}$ is defined by the sequence
$$ 0\lra (\mathcal{I}_A(1), 1)\lra (E(1), s)\lra (\mathcal{I}_B(1), 0) \lra 0.$$ 
Moreover, one computes that $P_{\mathcal{O}_A}(t)+P_{\mathcal{O}_B}(t)=-P_E(t)+2P_{\mathcal{O}_{\mathbb{P}^3}}(t)=3t+2$. We also require that $P_{\mathcal{O}_A(1)}(t)<P_{\mathcal{O}_B(1)}(t)$ since $P_{\mathcal{O}_A(1)}(t)+\delta=P_{\mathcal{O}_B(1)}(t)$ ($\delta>0$).
Therefore, there are four possibilities for $A$ and $B$ as follows

\medskip

\begin{equation}{\label{wallsfor038}}
\begin{array}{|c|c|c|c|c|}
\hline
&W_3&W_2&W_1&W_0\\
\hline
P_{\mathcal{O}_A}(t)=& 2t+1&3t&3t+1&3t+2\\
\hline
P_{\mathcal{O}_B}(t)=& t+1 &2&1&0\\
\hline
\end{array}
\end{equation}

\medskip

In summary, there are five walls for $v(1)$, including the collapsing wall $W_{\emptyset}$ and $W_i$ ($i=0,1,2,3$) as given in Chart (\ref{wallsfor038}) above.

\subsubsection{Wall-crossings}

We studied the wall-crossings at $W_0$, $W_1$, and $W_2$ in Section \ref{sec: birat'l trans}, where (an open subscheme of) the component $\tilde\calh_{pl}$ undergoes a flip, divisorial contraction, and removal of an open subscheme. 
Here, we will compute the extension groups at those walls to complete the wall-crossing picture. 
We will also show that when crossing $W_3$, the corresponding component undergoes a flip. 

For simplicity, define the chambers $C_i$ to be the region between $W_i$ and $W_{i-1}$ for $i=0,1,2,3,4$ ($W_4:=0$ and $W_{-1}:=W_{\emptyset}$) as shown in the picture below.

\begin{figure}[ht]
\begin{tikzpicture}
\draw[thick] (-0.5,0)--(12,0);
\draw (11,-0.2) -- (11, 0.2);
\node at (11, -0.5){$W_{\emptyset}$};
\draw (9,-0.2) -- (9, 0.2);
\node at (9, -0.5){$W_{0}$};
\node at (10, 0.5){$C_0$};
\draw (7,-0.2) -- (7, 0.2);
\node at (7, -0.5){$W_{1}$};
\node at (8, 0.5){$C_1$};
\draw (5,-0.2) -- (5, 0.2);
\node at (5, -0.5){$W_{2}$};
\node at (6, 0.5){$C_2$};
\draw (3,-0.2) -- (3, 0.2);
\node at (3, -0.5){$W_{3}$};
\node at (4, 0.5){$C_3$};
\node at (0, -0.5){$0$};
\draw (0,-0.2) -- (0, 0.2);
\node at (2, 0.5){$C_4$};
\end{tikzpicture}
\end{figure}

Let $A_i$ be a scheme whose Hilbert polynomial is 
\[
P_{\mathcal{O}_{A_i}}(t)=
\begin{cases}
3t+2 & \text{for} \ i=0\\
3t+1 & \text{for} \ i=1\\
3t & \text{for} \ i=2\\
2t+1 & \text{for} \ i=3.\\
\end{cases}
\]
Similarly, let $B_i$ be a scheme whose Hilbert polynomial is 
\[
P_{\mathcal{O}_{B_i}}(t)=
\begin{cases}
0 & \text{for} \ i=0\\
1 & \text{for} \ i=1\\
2 & \text{for} \ i=2\\
t+1& \text{for} \ i=3.\\
\end{cases}
\]

$\bullet$ \textbf{Wall-crossing at $W_0$.} 
We start with describing the strictly semistable locus at $W_0$, $\cals^{ss}_{{W_0}}\subset \cals_{{W_0}}$.
Recall that $W_0$ is defined as 
\begin{equation}{\label{Wall0}}
W_0: \ 0\lra (\mathcal{I}_{A_0}(1), 1)\lra (E(1), s)\lra (\mathcal{O}_{\mathbb{P}^3}(1), 0) \lra 0.
\end{equation}
According to \cite[Proposition 3.1]{nollet1997hilbert}, such a scheme $A_0$ is the (scheme theoretically) union of a plane cubic curve and two points in the same plane $H$. 
Since the strictly semistable pairs at ${W_0}$ are $(\mathcal{I}_{A_0}(1), 1)\oplus(\mathcal{O}_{\mathbb{P}^3}, 0)$, which are parameterized by $\calh^{3t+2}(\mathcal{U}_{\mathbb{P}^3}/\mathbb{P}^{3\vee})$. 
Here, $\mathcal{U}_{\mathbb{P}^3}$ is the universal plane in $\mathbb{P}^3\times \mathbb{P}^{3\vee}$, and $\calh^{3t+2}(\mathcal{U}_{\mathbb{P}^3}/\mathbb{P}^{3\vee})$ denotes the relative Hilbert scheme. 
We have
\[
\cals^{ss}_{W_0}\simeq \calh^{3t+2}(\mathcal{U}_{\mathbb{P}^3}/\mathbb{P}^{3\vee}),
\]
and its dimension is 16.

The next lemma computes the dimensions of the fibers for $\phi_{W_0}^{\pm}$.

\begin{lemma}{\label{ExtatW0}}
We have the following computational results.
\[
\begin{cases}
\Ext^1((\mathcal{I}_{A_0}(1), 1), (\mathcal{O}_{\mathbb{P}^3}(1), 0))\simeq\mathbb{C}^{15}\\
\Ext^1((\mathcal{O}_{\mathbb{P}^3}(1), 0), (\mathcal{I}_{A_0}(1), 1))\simeq\mathbb{C}^2.
\end{cases}
\]
\end{lemma}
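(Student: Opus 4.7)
The plan is to apply the long exact sequence of Lemma \ref{LES} to the two pairs $\Lambda=(\mathcal{I}_{A_0}(1),1)$ and $\Lambda'=(\mathcal{O}_{\mathbb{P}^3}(1),0)$ (and swapped), reducing each $Ext^1$ of pairs to standard sheaf-cohomology calculations on $\mathbb{P}^3$. Throughout, we use that $A_0\subset H$ is a plane cubic $C_0\subset H$ disjoint-union two reduced points $p_1,p_2\in H$, so $P_{\mathcal{O}_{A_0}}(t)=3t+2$ and $\mathcal{O}_{A_0}=\mathcal{O}_{C_0}\oplus\mathcal{O}_{p_1}\oplus\mathcal{O}_{p_2}$.

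For the first group, Lemma \ref{LES} together with $H^1(\mathcal{O}_{\mathbb{P}^3}(1))=0$ yields
\[
0\to Hom(\Lambda,\Lambda')\to Hom(\mathcal{I}_{A_0}(1),\mathcal{O}(1))\to H^0(\mathcal{O}(1))\to Ext^1(\Lambda,\Lambda')\to Ext^1(\mathcal{I}_{A_0}(1),\mathcal{O}(1))\to 0.
\]
Arguing exactly as in the proof of Lemma \ref{Ext+}, $Hom(\Lambda,\Lambda')=0$: any nonzero morphism $\mathcal{I}_{A_0}(1)\to\mathcal{O}(1)$ is a scalar multiple of the reflexive-hull inclusion, which sends the section cutting out $H$ to a nonzero section of $\mathcal{O}(1)$, hence does not descend to the pair $(\mathcal{O}(1),0)$. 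Since $A_0$ has codimension two, $\mathcal{I}_{A_0}^{\vee\vee}=\mathcal{O}$, so $Hom(\mathcal{I}_{A_0}(1),\mathcal{O}(1))=\mathbb{C}$, and $h^0(\mathcal{O}(1))=4$. Thus $ext^1(\Lambda,\Lambda')=3+ext^1(\mathcal{I}_{A_0}(1),\mathcal{O}(1))$.

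The key remaining computation is $Ext^1(\mathcal{I}_{A_0},\mathcal{O}_{\mathbb{P}^3})$. By Serre duality it equals $H^2(\mathbb{P}^3,\mathcal{I}_{A_0}(-4))^\vee$, and the defining sequence of $A_0$ together with the vanishing of $H^i(\mathcal{O}_{\mathbb{P}^3}(-4))$ for $i\le 2$ gives $H^2(\mathcal{I}_{A_0}(-4))\cong H^1(\mathcal{O}_{A_0}(-4))=H^1(\mathcal{O}_{C_0}(-4))$. To evaluate this last term, I will use the plane-curve resolution $0\to\mathcal{O}_H(1)\to\mathcal{O}_H(4)\to\mathcal{O}_{C_0}(4)\to 0$ together with Serre duality on $C_0$ (where $\omega_{C_0}\cong\mathcal{O}_{C_0}$ by adjunction in $H$) to get $h^1(\mathcal{O}_{C_0}(-4))=h^0(\mathcal{O}_{C_0}(4))=15-3=12$. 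Plugging back gives $ext^1((\mathcal{I}_{A_0}(1),1),(\mathcal{O}(1),0))=3+12=15$.

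For the second group the argument is shorter. Since the first pair $(\mathcal{O}(1),0)$ has zero section, every $Hom(\mathbb{C}\cdot s,-)$ term in Lemma \ref{LES} vanishes, and we obtain the isomorphism $Ext^1((\mathcal{O}(1),0),(\mathcal{I}_{A_0}(1),1))\cong Ext^1(\mathcal{O}(1),\mathcal{I}_{A_0}(1))=H^1(\mathcal{I}_{A_0})$. The cohomology sequence of $0\to\mathcal{I}_{A_0}\to\mathcal{O}_{\mathbb{P}^3}\to\mathcal{O}_{A_0}\to 0$ then gives $h^1(\mathcal{I}_{A_0})=h^0(\mathcal{O}_{A_0})-1=3-1=2$, finishing the proof. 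The only mildly delicate points are the vanishing of $Hom(\Lambda,\Lambda')$ in the first case and the identification of $\omega_{C_0}$ needed to pin down the $12$; the rest is Serre duality and standard cohomology of line bundles on $\mathbb{P}^2$ and $\mathbb{P}^3$.
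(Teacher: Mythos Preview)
Your proof is correct and follows essentially the same route as the paper: both apply Lemma~\ref{LES} to reduce to sheaf $Ext$'s, use Serre duality to identify $Ext^1(\mathcal{I}_{A_0}(1),\mathcal{O}(1))\cong H^1(\mathcal{O}_{A_0}(-4))^\vee$, drop the zero-dimensional part of $A_0$ to get $H^1(\mathcal{O}_{C_0}(-4))=\mathbb{C}^{12}$ on the plane cubic, and finish with $3+12=15$; the second group is handled identically via $H^1(\mathcal{I}_{A_0})=\mathbb{C}^2$. The only cosmetic difference is that you assume the two points are reduced and disjoint from $C_0$, while the paper allows an arbitrary length-$2$ subscheme of $H$ and simply notes it does not contribute to $H^1$; since the relevant cohomology numbers are unchanged either way, this does not affect the argument.
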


\begin{proof}
Let $\Lambda:=(\mathcal{I}_{A_0}(1), 1)$ and $\Lambda':=(\mathcal{O}_{\mathbb{P}^3}(1), 0)$. 
Apply the long exact sequence in Lemma \ref{LES}, and we get 
$$ 0 \lra \Hom(\Lambda, \Lambda') \lra \Hom(\mathcal{I}_{A_0}(1), \mathcal{O}_{\mathbb{P}^3}(1)) \lra \Hom(\mathbb{C}, H^0(\mathbb{P}^3, \mathcal{O}_{\mathbb{P}^3}(1))) \lra $$
$$ \lra \Ext^1(\Lambda, \Lambda') \lra \Ext^1(\mathcal{I}_{A_0}(1), \mathcal{O}_{\mathbb{P}^3}(1)) \lra 0 = \Hom(\mathbb{C}, H^1(\mathbb{P}^3, \mathcal{O}_{\mathbb{P}^3}(1))) $$
In the sequence, $\Hom(\Lambda, \Lambda')=0$ since otherwise, the non-zero section for $\mathcal{I}_{A_0}(1)$ and the inclusion $\mathcal{I}_{A_0}(1)\hookrightarrow \mathcal{O}_{\mathbb{P}^3}(1)$ will induce a non-zero section for $\mathcal{O}_{\mathbb{P}^3}(1)$. It is evident that $\Hom(\mathcal{I}_{A_0}(1), \mathcal{O}_{\mathbb{P}^3}(1))\simeq\mathbb{C}$ and $\Hom(\mathbb{C}, H^0(\mathbb{P}^3, \mathcal{O}_{\mathbb{P}^3}(1)))\simeq\mathbb{C}^4$. $\Ext^1(\mathcal{I}_{A_0}(1), \mathcal{O}_{\mathbb{P}^3}(1))\simeq H^2(\mathbb{P}^3, \mathcal{I}_{A_0}(-4))^*\simeq H^1(\mathbb{P}^3, \mathcal{O}_A(-4))^*$. Let $A'_0$ be the pure one-dimensional subscheme of $A_0$, then  $H^1(\mathbb{P}^3, \mathcal{O}_{A_0}(-4))^*\simeq H^1(\mathbb{P}^3, \mathcal{O}_{A'_0}(-4))^*$. Since $A'_0$ is a plane cubic curve, we have that
$$ H^1(\mathbb{P}^3, \mathcal{O}_{A'_0}(-4))^*\simeq H^1(A'_0, \mathcal{O}_{A'_0}(-12))^*\simeq\mathbb{C}^{12}. $$ Therefore, $\Ext^1(\Lambda, \Lambda')\simeq\mathbb{C}^{15}$.

For the extension group $\Ext^1((\mathcal{O}_{\mathbb{P}^3}(1), 0), (\mathcal{I}_{A_0}(1), 1))$, apply the long exact sequence in Lemma \ref{LES} and one gets $$ \Ext^1((\mathcal{O}_{\mathbb{P}^3}(1), 0), (\mathcal{I}_{A_0}(1), 1))\simeq \Ext^1(\calo_{\p3}(1), \cali_{A_0}(1))=H^1(\p3, \cali_{A_0})\simeq\mathbb{C}^{2}. $$
\end{proof}

\begin{remark}
One can also compute the dimension of the fiber of $\phi^+_{W_0}$ based on the geometric interpretation for $\cals^+_{{W_0}}$ in Remark \ref{dim Zi tilde}.
$\cals^+_{{W_0}}\simeq \tilde{\mathcal{Z}}_0$ parameterizes pairs $(E(1), s)$ that correspond to one-dimensional schemes $Y$ in ${Z}_0\subset \calh^{4t}_{\mathbb{P}^3}$ (Definition \ref{locusZi}). 
Such a one-dimensional scheme $Y$ is the union of a planar quartic curve with two points in the same plane.
Thus, $\mathcal{Z}_0\simeq \calh^{4t}(\mathcal{U}_{\mathbb{P}^3}/\mathbb{P}^{3\vee})$ and $\dim(\mathcal{Z}_0$)$=21$. 

Then, the fiber for the morphism $\phi^+_{W_0}: \cals^+_{{W_0}}\to \mathcal{Z}_0\simeq \calh^{4t}(\mathcal{U}_{\mathbb{P}^3}/\mathbb{P}^{3\vee})$ is
$$ \mathbb{P}\big(\Ext^1(\mathcal{I}_Y(2), \mathcal{O}_{\mathbb{P}^3})\big) \simeq \mathbb{P}\big( H^2(\mathbb{P}^3, \mathcal{I}_Y(-2))^*\big)\simeq \mathbb{P}\big(H^1(\mathbb{P}^3, \mathcal{O}_Y(-2))^*\big)=\mathbb{P}^9, $$ 
which implies that the dimension of $\cals^+_{{W_0}}$ is $30$. 
Therefore, the dimension of the fiber for the morphism $\phi^+_{W_0}: \cals^+_{W_0}\to \cals^{ss}_{W_0}\simeq \calh^{3t+2}(\mathcal{U}_{\p3}/\mathbb{P}^{3\vee})$ is $30-16=14$; here, we used that $\dim\calh^{3t+2}(\mathcal{U}_{\p3}/\mathbb{P}^{3\vee})=16$. 
\end{remark}

When $\delta$ crosses $W_0$, the component $\tilde\calh_1$ becomes $\tilde\calh_1'=(\tilde\calh_1\backslash\tilde{\mathcal{Z}}_0)\cup \cals^-_{W_0}$. The following diagram illustrates the wall-crossing for $\tilde\calh_1$ at $W_0$, and the numbers in the parentheses denote the dimensions.
\[
\begin{tikzcd}
\mathbb{P}^1\arrow[dr,"fiber",swap]& \tilde\calh_1' (32)\arrow[dr]&&\tilde\calh_1 (32)\arrow[dl,swap]\arrow[ll, "flip",swap]&\mathbb{P}^{14}\arrow[dl,"fiber"]\\
&\cals^-_{W_0} (17)\arrow[u,hook]\arrow[dr, "\phi^-_{W_0}"]&\tilde\calh_{1,W_0}(32)&\cals^+_{W_0} (30)\simeq \tilde{\mathcal{Z}}_0\arrow[u,hook]\arrow[dl,"\phi^+_{W_0}"]&\\
&&\begin{matrix}\cals^{ss}_{W_0}\\ \simeq 
\calh^{3t+2}_{\mathcal{U}_{\mathbb{P}^3}/{\mathbb{P}^{3\vee}}} (16)\end{matrix} \arrow[u,hook]&&\\
\end{tikzcd}
\]

\begin{remark}
When crossing $W_0$, the other component $\tilde\calh_2$ undergoes a contraction. This is because $\tilde\calh_2\cap \cals^+_{W_0}\neq \emptyset$ and dim$(\tilde\calh_1'|_s)=32$ where $s$ is a general point in $\cals^-_{W_0}$. 
To compute dim$(\tilde\calh_1'|_s)=32$, Theorem \ref{tanobs1} implies that dim$(\tilde\calh_1'|_s)=\ext^1(Q, E)$, in which $Q$ and $E$ satisfy the sequences 
\[
(1) \quad 0\lra \cali_{A_0|H} \lra Q\lra \calo_{\p3}\lra 0 \quad\quad (2)\quad 0\lra \cali_{A_0} \lra E \lra \calo_{\p3} \lra 0. 
\] 
Apply $\Hom(-, E)$ to the first sequence, and we get 
\[
0\to \Ext^1(\calo_{\p3}, E)\simeq\mathbb{C}^1 \to \Ext^1(Q, E)\to \Ext^1(\cali_{A_0|H}, E) \to \Ext^2(\calo_{\p3}, E)\simeq\mathbb{C}^1.
\]
We compute $\Ext^1(\cali_{A_0|H}, E)$ by applying $\Hom(\cali_{A_0|H}, -)$ to the second sequence, and we have the following part of the long exact sequence
$$ 0 \lra \Ext^1(\cali_{A_0|H}, \cali_{A_0}) \simeq \mathbb{C}^{16} \lra \Ext^1(\cali_{A_0|H},E) \lra $$
$$ \lra \Ext^1(\cali_{A_0|H}, \calo_{\p3})\simeq\mathbb{C}^{15} \lra \Ext^2(\cali_{A_0|H}, \cali_{A_0})\simeq\mathbb{C}^6 \lra \cdots $$
This gives $\ext^1(\cali_{A_0|H}, E)\leq 31$, hence $\ext^1(Q, E)\leq 32$. On the other hand, $\ext^1(Q, E)\geq 32$ since $s\in \tilde\calh_1'$. So we must have $\dim(\tilde\calh_1'|_s)=\ext^1(Q, E)= 32$.
\end{remark}

In summary, the moduli space for $\delta\in(W_1, W_0)$ is 
$$\cals^{\delta}_{\p3}=((\tilde\calh_1\backslash \tilde{\mathcal{Z}}_0)\cup \cals^-_{W_0})\cup (\tilde\calh_2\backslash(\tilde\calh_2\cap \tilde{\mathcal{Z}}_0)).$$

$\bullet$ \textbf{Wall-crossing at $W_1$.}
The following sequence defines $W_1$.
\[
W_1: \ 0\lra (\mathcal{I}_{A_1}(1), 1)\lra (E(1), s)\lra (\mathcal{I}_P(1), 0)\lra 0,
\]
where $A_1\subset H$ is the union of a planar cubic curve, and a point $Q$ in the same plane, and $P\in \mathbb{P}^3$ is a point. $P$ is either outside $H$ or is an embedded point (in $A_1$ or set-theoretically coincide with $Q$), pointing out of the plane.

The strictly semistable locus $\cals^{ss}_{{W_1}}$ parameterizes pairs $(\mathcal{I}_{A_1}(1), 1)\oplus (\mathcal{I}_P(1), 0)$. So $\cals^{ss}_{{W_1}} \simeq \mathbb{P}^3\times \calh^{3t+1}_{\mathcal{U}_{\mathbb{P}^3}/\mathbb{P}^{3\vee}}$, which has dimension $17$.

We show some computational results in the next lemma to compute the fibers of the morphisms
$$\phi_{W_1}^{\pm}: \cals^{\pm}_{W_1}\to \cals^{ss}_{W_1}.$$
We skip the proof since it is similar to the one in Lemma \ref{ExtatW0}.
Let $A'_1\subset A_1$ be the pure one-dimensional subscheme. 

\begin{lemma}{\label{ExtatW1}}
$\Ext^1((\mathcal{I}_{A_1}(1), 1), (\mathcal{I}_{P}(1), 0))\simeq\mathbb{C}^{15}$.

$\Ext^1((\mathcal{I}_{P}(1), 0), (\mathcal{I}_{A_1}(1), 1))\simeq \Ext^1(\cali_P(1), \cali_{A_1})$
$=\begin{cases}
\mathbb{C} & \text{if}\ P\not\in A_1\\
\mathbb{C}^3 & \text{if}\ P=Q\ \text{and}\ P\not\in A'_1\\
\mathbb{C} & \text{if}\ P\neq Q\ \text{and}\ P\in A'_1\\
\mathbb{C}^6 & \text{if}\ P=Q \in A'_1.\\
\end{cases}$
\end{lemma}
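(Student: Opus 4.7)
The plan is to apply Lemma \ref{LES} in both directions and reduce each pair-Ext group to a sheaf-Ext group, which is then evaluated via the ideal-sheaf sequence $0\to\cali_X\to\calo\to\calo_X\to 0$ together with Serre duality. The argument mirrors that of Lemma \ref{ExtatW0}; the new feature is that both pair entries now carry ideal sheaves, so the relative position of $P$ with respect to $A_1$ enters the local computations and accounts for the case split of the second group.

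For the first group, set $\Lambda=(\cali_{A_1}(1),1)$ and $\Lambda'=(\cali_P(1),0)$. The compatibility argument from Lemma \ref{ExtatW0} still gives $\Hom(\Lambda,\Lambda')=0$: a nonzero $\phi$ together with $s'=1$ would force a nonzero section in the target, contradicting $s=0$. Using $h^0(\cali_P(1))=3$, $h^1(\cali_P(1))=0$, and the vanishing of the image into $\Ext^2$, Lemma \ref{LES} reduces the computation to
\[
\dim \Ext^1(\Lambda,\Lambda') = 3 - \dim \Hom(\cali_{A_1}(1),\cali_P(1)) + \dim \Ext^1(\cali_{A_1}(1),\cali_P(1)).
\]
Applying $\Hom(\cali_{A_1}(1),-)$ to $0\to\cali_P(1)\to\calo(1)\to\calo_P\to 0$ reduces the right-hand side to $\dim\Hom(\cali_{A_1}(1),\calo(1))=1$, $\dim\Ext^1(\cali_{A_1}(1),\calo(1))=h^1(\calo_{A_1}(-4))^{\vee}=12$ (Serre duality together with the planarity of $A'_1$ as a plane cubic), and local contributions at $P$ that cancel in the alternating sum. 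The result is $3+12=15$, independent of the configuration of $P$.

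For the second group, the source pair has zero section, so the $\mathbb{C}\cdot(s)$ terms in Lemma \ref{LES} vanish, yielding the clean isomorphism $\Ext^1((\cali_P(1),0),(\cali_{A_1}(1),1))\cong\Ext^1(\cali_P,\cali_{A_1})$ after cancelling twists. Apply $\Hom(-,\cali_{A_1})$ to $0\to\cali_P\to\calo\to\calo_P\to 0$; torsion-freeness of $\cali_{A_1}$ gives $\Hom(\calo_P,\cali_{A_1})=0$ and $h^0(\cali_{A_1})=0$ is automatic, producing a long exact sequence that computes $\Ext^1(\cali_P,\cali_{A_1})$ in terms of the global invariant $\dim H^1(\cali_{A_1})=h^0(\calo_{A_1})-1$ (detecting disconnectedness of $A_1$) and the local terms $\Ext^i(\calo_P,\cali_{A_1})$ (extracted via the Koszul resolution of $\calo_P$). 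Case analysis on the incidence of $P$ with the planar cubic $A'_1$ and with the distinguished point $Q\in A_1$ then produces the four stated values: $P\notin A_1$ gives only the generic contribution $\mathbb{C}$; $P=Q\notin A'_1$ adds the disconnectedness contribution of $A_1$; $P\in A'_1\setminus\{Q\}$ trades the disconnectedness term for a local $\mathcal{E}xt$ contribution along the cubic; and $P=Q\in A'_1$ combines both phenomena.

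The main obstacle is the last configuration, $P=Q\in A'_1$, where the probing point coincides with the embedded point of $A_1$. The cleanest route is the local-to-global spectral sequence $H^p(\mathcal{E}xt^q(\calo_P,\cali_{A_1}))\Rightarrow\Ext^{p+q}(\calo_P,\cali_{A_1})$, with each local $\mathcal{E}xt$ sheaf computed from an explicit local presentation of $\cali_{A_1}$ at $Q$ (a local smooth branch of the cubic together with an embedded infinitesimal component pointing out of the plane $H$). Careful bookkeeping of which local generators survive in the spectral sequence produces the jumps from $\mathbb{C}$ to $\mathbb{C}^3$ and $\mathbb{C}^6$.
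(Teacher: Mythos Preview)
Your approach is essentially the same as the paper's: the paper omits the proof entirely, stating it is ``similar to the one in Lemma~\ref{ExtatW0}'', and your reduction via Lemma~\ref{LES} followed by the ideal-sheaf long exact sequence and Serre duality is exactly that template (and also matches the general computations in Lemmas~\ref{Ext+} and~\ref{Ext-}, together with Remark~\ref{constfiber} for the constancy of the first group). Your sketch for the case split in the second group is at the same level of detail the paper provides elsewhere, so there is nothing to add.
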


Lemma \ref{ExtatW1} implies the following two diagrams
\begin{figure}[ht]
\begin{tikzcd}
\tilde\calh_1''(32)\arrow[r]&\tilde\calh_{1,W_1}(32)&\tilde\calh_1'(32)\arrow[l,swap,"\Phi^+_{W_1}"]&\\
\cals^-_{W_1}(17)\arrow[u,hook]\arrow[r]&\begin{matrix}\cals^{ss}_{W_1}(17)\\ \simeq 
\calh^{3t+1}_{\mathcal{U}_{\mathbb{P}^3}/{\mathbb{P}^{3\vee}}}\times \mathbb{P}^3\end{matrix}\arrow[u, hook]&\cals^+_{W_1}\simeq \tilde{\mathcal{Z}}_1(31)\arrow[u,hook]\arrow[l,"\phi^+_{W_1}",swap]&\\
\tilde{\mathcal{X}}^-_{W_1}(17)\arrow[u, hook]\arrow[r, "\simeq"]&\mathcal{R}_{W_1}(17)\arrow[u, hook]&\tilde{\mathcal{X}}_1\arrow[l, "\phi^+_{W_1}"](31)\arrow[u, hook]&\mathbb{P}^{14}\arrow[l, "fiber"]\\
\end{tikzcd}
\caption{Fibration over $\mathcal{R}_{W_1}$}
\label{Fibration RW1}
\end{figure}

\begin{figure}[ht]
\begin{tikzcd}
&\tilde\calh_1''\arrow[r]&\tilde\calh_{1,W_1}&\tilde\calh_1'\arrow[l,swap,"\Phi^+_{W_1}"]&\\
F\arrow[r, "fiber"]&\cals^-_{W_1}\backslash\tilde{\mathcal{X}}^-_{W_1}\arrow[u,hook]\arrow[r, "\phi^-_{W_1}"]&\cals^{ss}_{W_1}\backslash\mathcal{R}_{W_1} 
\arrow[u, hook]&\tilde{V}_1\arrow[u,hook]\arrow[l,"\phi^+_{W_1}",swap]&\mathbb{P}^{14}\arrow[l, "fiber"]\\
\end{tikzcd}
\caption{Fibration over $\cals^{ss}_{W_1}\backslash \mathcal{R}_{W_1}$}
\label{Fiber complement}
\end{figure}

Notations in the above two diagrams are from Definition \ref{locustilde} and Section \ref{sec: birat'l trans}. 
The fiber $F$ in Figure \ref{Fiber complement} could be $\mathbb{P}^{0}, \mathbb{P}^{2}$ or $\mathbb{P}^{5}$ depending on the position of $P$.
We see that the dimension of the fibers for $\phi^-_{W_1}$ jumps in the boundary of $\mathcal{S}^{ss}_{W_1}$ (see Remark \ref{fiber dim jump}).
Indeed, $\cals^+_{W_1}\simeq \tilde{\mathcal{Z}}_1\subset \tilde\calh_1'$ is a divisor following from the simple computation that $\dim(\tilde\calh_1')=32$ and $\dim(\cals^+_{W_1})=\dim(\cals^{ss}_{W_1})+\dim({\rm fiber~of~ }\phi^+_{W_1})=17+14=31$ (or one computes that $\dim(\tilde{\mathcal{Z}}_1)=31$). 
The first diagram (Figure \ref{Fibration RW1}) shows that the open subscheme $\tilde{\mathcal{X}}_1\simeq \tilde\calh_1'\backslash \tilde{V}_1\subset \tilde\calh_1'$ undergoes a divisorial contraction when crossing $W_1$ as stated in Theorem \ref{flipblowdownremoval}.

Moreover, the other component $\tilde\calh_2'$ is contracted along $\tilde\calh_2'\cap \tilde{\mathcal{Z}}_1$ when crossing $W_1$, and the moduli space for $\delta\in(W_2, W_1)$ is 
$$\cals_{W_1^-}\simeq (\tilde\calh_1'\backslash \tilde{\mathcal{Z}}_1\cup \cals^-_{W_1})\cup(\tilde\calh_2'\backslash(\tilde\calh_2'\cap \tilde{\mathcal{Z}}_1)).$$

$\bullet$ \textbf{Wall-crossing at $W_2$.} Recall that $W_2$ is defined by the sequence 
\[
0\lra (\mathcal{I}_{A_2}(1), 1)\lra (E(1), s)\lra (\mathcal{I}_{P_2}(1), 0) \lra 0,
\]
where $A_2$ is a planar one-dimensional scheme with $P_{\mathcal{O}_{A_2}}(t)=3t$, and $P_2\subset \mathbb{P}^3$ is a $0-$dimensional scheme of length $2$. 

$\cals^{ss}_{{W_2}}$ parameterizes pairs $(\mathcal{I}_{A_2}(1), 1)\oplus (\mathcal{I}_{P_2}(1), 0)$, so $\cals^{ss}_{{W_2}}\simeq \calh^{3t}_{\p3}\times \mathbb{P}^3 \times\mathbb{P}^3$, which is smooth of dimension $18$. 
Lemma \ref{Ext-} implies that wall-crossing at $W_2$ removes $ \tilde{\mathcal{X}}_2\subset \cals^+_{{W_2}}\simeq \tilde{\mathcal{Z}}_2$. 
We provide some computational results in the next lemma without proof to compute the fibers of $\phi^+_{W_2}$ over $\tilde{V}_2$. 
For the case $P=Q$, denote by $\bar{P}$ the corresponding fat point.

\begin{lemma}{\label{ExtatW2}}
$\Ext^1((\mathcal{I}_{A_2}(1), 1), (\mathcal{I}_{P_2}(1), 0)))\simeq\mathbb{C}^{15}$.
\[
\begin{array}{rl}
\Ext^1((\mathcal{I}_{P_2}(1), 0), (\mathcal{I}_{A_2}(1), 1))&\simeq \Ext^1(\cali_{P_2}(1), \cali_{A_2}(1))\\
&=\begin{cases}
0 & \text{if} \ P \ \text{or}\ Q\not\in A_2\\
\mathbb{C} &  \text{if}\  P,Q \in A_2,\ \text{and} \ P\neq Q\\
\mathbb{C}^2 &  \text{if}\  P=Q\in A_2, \ \text{and}\ \bar{P}\not\subset H.\\
\end{cases}\\
\end{array}
\]
\end{lemma}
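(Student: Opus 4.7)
The proof will follow the template established in Lemmas \ref{ExtatW0} and \ref{ExtatW1}: apply the long exact sequence of Lemma \ref{LES} for coherent pairs, reduce to computations of sheaf-level $\Hom$ and $\Ext^1$ groups, and then compute these via the standard ideal sequences together with Serre duality.

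For the first identity, set $\Lambda=(\cali_{A_2}(1),1)$ and $\Lambda'=(\cali_{P_2}(1),0)$. One checks $\Hom(\Lambda,\Lambda')=0$ by the same argument as before: any nonzero $\phi\colon\cali_{A_2}(1)\to\cali_{P_2}(1)$ lifts to the reflexive hulls and is therefore injective, so $\phi\circ s\neq 0$, contradicting the required $\phi\circ s=0$. Since $h^0(\cali_{P_2}(1))=2$ and $h^1(\cali_{P_2}(1))=0$, the long exact sequence gives
\[
\dim\Ext^1(\Lambda,\Lambda')=2-\dim\Hom(\cali_{A_2}(1),\cali_{P_2}(1))+\dim\Ext^1(\cali_{A_2}(1),\cali_{P_2}(1)).
\]
To compute $\Ext^1(\cali_{A_2}(1),\cali_{P_2}(1))$, apply $\Hom(-,\cali_{P_2}(1))$ to $0\to\cali_{P_2}(1)\to\calo(1)\to\calo_{P_2}\to 0$ (or, after untwisting, to the ideal sequence for $A_2$ together with the sequence $0\to\calo_{\p3}\to\cali_{A_2}(1)\to i_*\calo_H(-2)\to 0$ encoding the fact that $A_2\subset H$ is cut out by a cubic); Serre duality reduces the computation to cohomology on $\p3$ and on the plane $H\cong\p2$. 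Summing the contributions should yield $\mathbb{C}^{15}$, and the expected cancellation between $\Hom(\cali_{A_2}(1),\cali_{P_2}(1))$ and $\Ext^1(\cali_{A_2}(1),\cali_{P_2}(1))$ explains why the answer is independent of the position of $P_2$ relative to $A_2$.

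For the second identity, the LES of Lemma \ref{LES} with the section zero on the left factor collapses to $\Ext^1((\cali_{P_2}(1),0),(\cali_{A_2}(1),1))\cong\Ext^1(\cali_{P_2}(1),\cali_{A_2}(1))=\Ext^1(\cali_{P_2},\cali_{A_2})$, since the relevant $\Hom(\mathbb{C}\!\cdot\!s,-)$ terms vanish. Apply $\Hom(-,\cali_{A_2})$ to $0\to\cali_{P_2}\to\calo\to\calo_{P_2}\to 0$, using $H^0(\cali_{A_2})=0$ and (assuming $A_2$ connected) $H^1(\cali_{A_2})=0$, to obtain
\[
\Ext^1(\cali_{P_2},\cali_{A_2})\;\hookrightarrow\;\Ext^2(\calo_{P_2},\cali_{A_2}).
\]
The case analysis is then carried out by computing $\Ext^2(\calo_{P_2},\cali_{A_2})$ locally, using the Koszul resolution of $\calo_{P_2}$ together with $0\to\cali_{A_2}\to\calo\to\calo_{A_2}\to 0$. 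If $P\notin A_2$ or $Q\notin A_2$, then $\cali_{A_2}$ is locally free at one of the points of $P_2$ and the Koszul vanishing gives $\Ext^2(\calo_{P_2},\cali_{A_2})=0$; if $P\neq Q$ both lie on $A_2$, local computation at each point contributes to $\mathcal{E}xt^2$ and one obtains $\mathbb{C}$; if $P=Q$ is a length-two scheme $\bar P$ supported in $A_2$ with $\bar P\not\subset H$, the extra infinitesimal direction transverse to $H$ doubles the answer.

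The main obstacle is precisely the last case: when $P_2$ is a nonreduced length-two subscheme, one must carefully analyze the local Ext groups at $\bar P$ and how the embedded direction interacts with the plane $H$ containing $A_2$. The distinction between $\bar P\subset H$ and $\bar P\not\subset H$ corresponds to whether $\bar P$ is already scheme-theoretically contained in $A_2$ or not, and hence governs whether the local $\mathcal{E}xt$ picks up an additional generator. Handling this correctly (and verifying compatibility with the inclusion $\Ext^1(\cali_{P_2},\cali_{A_2})\hookrightarrow\Ext^2(\calo_{P_2},\cali_{A_2})$) is the heart of the case analysis; the other pieces are routine applications of Serre duality and the ideal sequence.
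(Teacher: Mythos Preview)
The paper states this lemma explicitly \emph{without proof}, saying only that it is analogous to Lemmas~\ref{ExtatW0} and~\ref{ExtatW1}; so there is no detailed argument to compare against, only the template of Lemma~\ref{Ext+}, Lemma~\ref{Ext-}, and Remark~\ref{constfiber}. Your overall strategy matches that template, and the first identity is fine: the formula in the proof of Lemma~\ref{Ext+} gives $\tfrac{1}{2}(l+2)(l+1)+h^1(\calo_{A_2}(-3-l))=3+12=15$ for $l=1$ and $A_2$ a plane cubic, and Remark~\ref{constfiber} guarantees independence of the position of $P_2$.

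For the second identity your reduction to $\Ext^1(\cali_{P_2},\cali_{A_2})$ via Lemma~\ref{LES} is correct, but the case analysis has a real error. From $0\to\cali_{P_2}\to\calo\to\calo_{P_2}\to 0$ you get
\[
0\longrightarrow \Ext^1(\cali_{P_2},\cali_{A_2})\longrightarrow \Ext^2(\calo_{P_2},\cali_{A_2})\longrightarrow H^2(\cali_{A_2})\cong\mathbb{C},
\]
since $A_2$ is a plane cubic and hence $H^1(\cali_{A_2})=0$, $H^2(\cali_{A_2})\cong H^1(\calo_{A_2})=\mathbb{C}$. Two things go wrong in your sketch. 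First, the claim ``if $P\notin A_2$ or $Q\notin A_2$ then $\Ext^2(\calo_{P_2},\cali_{A_2})=0$'' is false when exactly one of the two points lies on $A_2$: the local $\mathcal{E}xt$ at that point still contributes $\mathbb{C}$ (one checks $\Ext^2(\calo_p,\cali_{A_2})\cong\Ext^1(\calo_p,\calo_{A_2})=\mathbb{C}$ for $p\in A_2$), so $\Ext^2(\calo_{P_2},\cali_{A_2})=\mathbb{C}$, not $0$. Second, and relatedly, you never track the boundary map to $H^2(\cali_{A_2})=\mathbb{C}$. That map is precisely what cuts the dimension down by one: when both $P,Q\in A_2$ are distinct you have $\Ext^2(\calo_{P_2},\cali_{A_2})=\mathbb{C}^2$, not $\mathbb{C}$, and only after taking the kernel of the (surjective) map to $\mathbb{C}$ do you get $\Ext^1=\mathbb{C}$; similarly when only one point lies on $A_2$ the kernel of $\mathbb{C}\to\mathbb{C}$ gives $0$. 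Without controlling this boundary map your counts are systematically off by one, and the argument as written does not establish any of the three cases. The fix is straightforward once identified: show the map $\Ext^2(\calo_p,\cali_{A_2})\to H^2(\cali_{A_2})$ is nonzero for each $p\in A_2$ (e.g.\ via Serre duality it becomes the restriction $H^0(\omega_{A_2})\to H^0(\omega_{A_2}|_p)$, which is nonzero for a nowhere-vanishing $1$-form on the genus-one curve), and then the kernel computation gives the stated answers.
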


Following the above computational results, we illustrate the wall-crossing for $\tilde\calh_1''$ at $W_2$ in the following two diagrams.

\[
\begin{tikzcd}
&\tilde\calh_1^{(3)}\arrow[r, hook]&\tilde\calh_{1,W_2}(18)&\tilde\calh_1''(32)\arrow[l,swap,"\Phi^+_{W_2}"]&\\
F\arrow[r,"fiber"]&\cals^-_{W_2}\arrow[u,hook, "\simeq"]\arrow[r]&\begin{matrix}\cals^{ss}_{W_2} (18)\\ \simeq 
\calh^{3t}_{\p3}\times \mathbb{P}^3\times \p3 \end{matrix}\arrow[u, hook]&\cals^+_{W_2}\simeq \tilde{\mathcal{Z}}_2 (32)\arrow[u,hook]\arrow[l,"\phi^+_{W_2}",swap]&\\
&\tilde{\mathcal{X}}^-_{W_2}=\emptyset\arrow[u, hook]\arrow[r]&\mathcal{R}_{W_2} (18)\arrow[u, hook]&\tilde{\mathcal{X}}_2(32)\arrow[l, "\phi^+_{W_2}", swap]\arrow[u, hook]&\mathbb{P}^{14}\arrow[l, "fiber", swap]\\
\end{tikzcd}
\]

\[
\begin{tikzcd}
&\tilde\calh_1^{(3)}\arrow[r]&\tilde\calh_{1,W_2}&\tilde\calh_1''\arrow[l,swap,"\Phi^+_{W_2}"]&\\
F\arrow[r, "fiber"]&\cals^-_{W_2}\arrow[u,hook]\arrow[r, "\phi^-_{W_2}"]&\cals^{ss}_{W_2}\backslash\mathcal{R}_{W_2} 
\arrow[u, hook]&\tilde{V}_2\arrow[u,hook]\arrow[l,"\phi^+_{W_2}",swap]&\mathbb{P}^{14}\arrow[l, "fiber", swap]\\
\end{tikzcd}
\]

One computes that dim$(\tilde{\mathcal{X}}_2)=32$, and $\tilde{\mathcal{X}}_2\subset \tilde\calh_1''$ is an open subscheme.
The first diagram shows that the open subscheme $\tilde{\mathcal{X}}_2\simeq\tilde\calh_1''\backslash \tilde{V}_2$ is contracted when hitting $W_2$, and then removed when $\delta$ crosses $W_2$. 
The second diagram shows that the subscheme $\tilde{V}_2$ is contracted when hitting 
$W_2$, and then replaced by $\cals^-_{W_2}$ lying over $\cals^{ss}_{W_2}\backslash \mathcal{R}_{W_2}$ when crossing $W_2$. 
We see again that the dimension of the fibers for $\phi^-_{W_2}$ jumps on the boundary of $\cals^{ss}_{W_2}$ (see Remark \ref{fiber dim jump}).

Indeed, $\tilde\calh_1^{(3)}$ is no longer a component in the moduli space, but a sub-variety of $\tilde\calh_2^{(3)}$. Up to this point, the moduli space for $\delta\in(W_3, W_2)$ is 
\[
\begin{array}{rl}
\cals^{\delta}_{\p3}\simeq& (\tilde\calh_2''\backslash \tilde{\mathcal{Z}}_2) \cup \cals^-_{W_2}\\
\simeq&(\tilde\calh_2'\backslash (\tilde{\mathcal{Z}}_1\cup\tilde{\mathcal{Z}}_2)) \cup \cals^-_{W_1}\cup \cals^-_{W_2}\\
\simeq&(\tilde\calh_2\backslash (\tilde{\mathcal{Z}}_0\cup\tilde{\mathcal{Z}}_1\cup\tilde{\mathcal{Z}}_2)) \cup \cals^-_{W_0}\cup \cals^-_{W_1}\cup \cals^-_{W_2}\\
\simeq& (\tilde\calh_2\backslash(\tilde\calh_1\cap \tilde\calh_2))\cup  \cals^-_{W_0}\cup \cals^-_{W_1}\cup \cals^-_{W_2}\\
\simeq&\tilde\calh_2^{(3)}.
\end{array}
\]
In the stratification of $\cals^{\delta}_{\p3}$, every stable pair in  $\tilde\calh_2\backslash(\tilde\calh_1\cap \tilde\calh_2)$ is saturated, while every pair in $\cals^-_{W_0}\cup \cals^-_{W_1}\cup \cals^-_{W_2}$ is unsaturated.

$\bullet$ \textbf{Wall-crossing at $W_3$.} $W_3$ is defined by the following sequence
\begin{equation}{\label{W3}}
W_3: \quad 0\lra (\mathcal{I}_C(1), 1)\lra (E(1), s)\lra (\mathcal{I}_{L}(1),0)\lra 0,
\end{equation}
where $L\subset \mathbb{P}^3$ is a line and $C\subset \mathbb{P}^3$ is a conic curve. 

Again, we start with describing the strictly semistable locus at the wall. 
$\cals^{ss}_{{W_3}}$ parameterizes pairs $(\mathcal{I}_C(1), 1)\oplus(\mathcal{I}_{L}(1),0)$, hence, $\cals^{ss}_{{W_3}}\simeq Gr(2,4)\times \calh^{2t+1}_{\mathbb{P}^3}$, which is smooth of dimension $12$. 
We compute the fibers of the morphisms $\phi^{\pm}_{W_3}: \cals^{\pm}_{{W_3}}\to \cals^{ss}_{{W_3}}$ in the following lemma.

\begin{lemma}{\label{ExtW3}}
We have the following results 
$$\begin{cases}
   \Ext^1((\mathcal{I}_C(1), 1), (\mathcal{I}_{L}(1),0))\simeq\mathbb{C}^9\\
   \Ext^1((\mathcal{I}_{L}(1),0), (\mathcal{I}_C(1), 1))\simeq\mathbb{C}^3.  
\end{cases}
$$
\end{lemma}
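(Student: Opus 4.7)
The plan is to apply the long exact sequence of Lemma \ref{LES} to each pair of coherent pairs in turn, reducing the computation to sheaf-level $\Hom$/$\Ext^1$ and cohomology of twisted ideal sheaves on $\mathbb{P}^3$. Throughout, I work at a general point of $\cals^{ss}_{W_3}\cong Gr(2,4)\times\calh^{2t+1}_{\mathbb{P}^3}$, so I may assume the line $L$ and the (smooth, plane) conic $C$ are disjoint and that $L$ is not contained in the plane spanned by $C$. Note also that twisting by $\calo_{\mathbb{P}^3}(1)$ does not change $\Ext^i(\cali_C(1),\cali_L(1))=\Ext^i(\cali_C,\cali_L)$ and similarly for the other direction.

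For the first extension group, set $\Lambda=(\cali_C(1),1)$ and $\Lambda'=(\cali_L(1),0)$. The relevant portion of Lemma \ref{LES} reads
\[
0\to\Hom(\Lambda,\Lambda')\to\Hom(\cali_C,\cali_L)\to H^0(\cali_L(1))\to\Ext^1(\Lambda,\Lambda')\to\Ext^1(\cali_C,\cali_L)\to H^1(\cali_L(1)).
\]
The first two terms vanish: $\Hom((\cali_C(1),1),(\cali_L(1),0))=0$ because any nonzero sheaf map $\cali_C\to\cali_L$ would be injective and would not kill the defining section, contradicting that the target has zero section; and $\Hom(\cali_C,\cali_L)=0$ because any injection would imply $C\subset L$, which is ruled out by degrees. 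Next, $H^0(\cali_L(1))=\mathbb{C}^2$ (linear forms vanishing on $L$) and $H^1(\cali_L(1))=0$. Thus $\Ext^1(\Lambda,\Lambda')=\mathbb{C}^2+\Ext^1(\cali_C,\cali_L)$, and the goal reduces to showing $\Ext^1(\cali_C,\cali_L)=\mathbb{C}^7$. I would apply $\Hom(\cali_C,-)$ to $0\to\cali_L\to\calo\to\calo_L\to0$, use that $\Hom(\cali_C,\calo)=\Hom(\cali_C,\calo_L)=\mathbb{C}$ (the latter via $\Hom(-,\calo_L)$ applied to $0\to\cali_C\to\calo\to\calo_C\to0$, using $L\cap C=\emptyset$), and then compute $\Ext^1(\cali_C,\calo)\cong\Ext^2(\calo_C,\calo)\cong H^1(C,\calo_C(-4))^\vee$ by Serre duality. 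For $C\cong\mathbb{P}^1$ with $\calo_C(1)\cong\calo_{\mathbb{P}^1}(2)$, this gives $H^1(\mathbb{P}^1,\calo(-8))^\vee=\mathbb{C}^7$, while $\Ext^1(\cali_C,\calo_L)=0$ by disjointness. Piecing the long exact sequence together yields $\Ext^1(\cali_C,\cali_L)=\mathbb{C}^7$ and hence $\Ext^1(\Lambda,\Lambda')=\mathbb{C}^9$.

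For the second extension group, take $\Lambda=(\cali_L(1),0)$ and $\Lambda'=(\cali_C(1),1)$. Since the source section is zero, the vector space $\mathbb{C}\cdot(s)$ in Lemma \ref{LES} vanishes and the long exact sequence collapses to the isomorphism $\Ext^1(\Lambda,\Lambda')\cong\Ext^1(\cali_L,\cali_C)$. I would now apply $\Hom(\cali_L,-)$ to $0\to\cali_C\to\calo\to\calo_C\to0$, getting
\[
0\to\Hom(\cali_L,\cali_C)\to\Hom(\cali_L,\calo)\to\Hom(\cali_L,\calo_C)\to\Ext^1(\cali_L,\cali_C)\to\Ext^1(\cali_L,\calo)\to\Ext^1(\cali_L,\calo_C).
\]
As before $\Hom(\cali_L,\cali_C)=0$, $\Hom(\cali_L,\calo)=\mathbb{C}$, and disjointness of $L$ and $C$ yields $\Hom(\cali_L,\calo_C)=\mathbb{C}$ (via $\Hom(-,\calo_C)$ applied to the defining sequence for $\cali_L$) and $\Ext^1(\cali_L,\calo_C)=H^1(\calo_C)=0$. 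A Serre-duality computation analogous to the one above, this time on $L\cong\mathbb{P}^1$ with $\calo_L(-4)=\calo_{\mathbb{P}^1}(-4)$, yields $\Ext^1(\cali_L,\calo)\cong H^1(\calo_L(-4))^\vee=\mathbb{C}^3$. The map $\Hom(\cali_L,\calo)\to\Hom(\cali_L,\calo_C)$ is an isomorphism because $L\cap C=\emptyset$ makes restriction to $C$ an isomorphism of the global sections, so the long exact sequence finally gives $\Ext^1(\cali_L,\cali_C)=\mathbb{C}^3$.

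The only delicate point is justifying the claimed genericity: the statement must give a dimension that is constant on the moduli space $\cals^{ss}_{W_3}$, so strictly I must either verify the extension dimensions are upper-semicontinuous and hit the stated values generically, or argue independence of configuration. The rigid computations above are valid for $L\cap C=\emptyset$ and $C$ smooth; I expect the main subtlety to be confirming (via the general machinery already used in Lemmas \ref{ExtatW0}--\ref{ExtatW2}, or by a direct check) that the quoted dimensions are in fact constant across $\cals^{ss}_{W_3}$, since jumps would occur precisely where $L$ meets $C$ or $C$ degenerates and these loci have positive codimension. The rest of the argument is essentially bookkeeping with standard short exact sequences and Serre duality on $\mathbb{P}^1$.
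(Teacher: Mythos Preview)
Your proof is correct and follows essentially the same approach as the paper: both invoke Lemma~\ref{LES} to reduce to the sheaf-level groups $\Ext^1(\cali_C,\cali_L)=\mathbb{C}^7$ and $\Ext^1(\cali_L,\cali_C)=\mathbb{C}^3$, together with $H^0(\cali_L(1))=\mathbb{C}^2$ and the collapse of the long exact sequence when the source section is zero. You supply more detail on those sheaf computations (via the ideal-sheaf sequences and Serre duality on $\mathbb{P}^1$) than the paper, which simply asserts them; your closing remark about genericity is a fair caveat that the paper does not address, but for the intended application (dimensions of generic fibers at $W_3$) the generic computation suffices.
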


\begin{proof}
The long exact sequence in Lemma \ref{LES} implies that
$$ \Ext^1((\mathcal{I}_{L}(1),0), (\mathcal{I}_C(1), 1))=\Ext^1(\mathcal{I}_{L}(1), \mathcal{I}_C(1)), $$
and it is straightforward to compute that $\Ext^1(\mathcal{I}_{L}(1), \mathcal{I}_C(1))\simeq\mathbb{C}^{3}$. 

For $\Ext^1((\mathcal{I}_C(1), 1), (\mathcal{I}_{L}(1),0))$, apply the long exact sequence in Lemma \ref{LES}, and part of the long exact sequence is as follows
\[
0\to H^0(\mathbb{P}^3, \mathcal{I}_C(1)) \to \Ext^1((\mathcal{I}_C(1), 1), (\mathcal{I}_{L}(1),0))\to \Ext^1(\mathcal{I}_C(1), \mathcal{I}_{L}(1))\to 0.
\]
The claim follows from $h^0(\mathbb{P}^3, \mathcal{I}_L(1))=2$ and $\ext^1(\mathcal{I}_C(1), \mathcal{I}_{L}(1))=7$.
\end{proof}

Therefore, $\dim(\cals^+_{W_3})=12+8=20$ and $\dim(\cals^-_{W_3})=12+2=14$, and the component $\tilde\calh_2^{(3)}$ undergoes a flip when crossing $W_3$. We describe this wall-crossing in the following diagram. 
\[
\begin{tikzcd}
\mathbb{P}^2\arrow[dr,"fiber",swap]&\tilde\calh_2^{(4)}(23)\arrow[dr]&&\tilde\calh_2^{(3)}(23)\arrow[dl,swap]\arrow[ll, "flip",swap]&\mathbb{P}^{8}\arrow[dl,"fiber"]\\
&\cals^-_{W_3}(14)\arrow[u,hook]\arrow[dr, "\phi^-_{W_3}"]&\tilde\calh_{2,W_3}(23)&\cals^+_{W_3}(20)\arrow[u,hook]\arrow[dl,"\phi^+_{W_3}"]&\\
&&\begin{matrix}\cals^{ss}_{W_3}(12)\\ \simeq 
Gr(2,4)\times \calh^{2t+1}_{\p3}\end{matrix} \arrow[u,hook]&&\\
\end{tikzcd}
\]

Next, we reinterpret the morphism from $\cals^+_{{W_3}}$ to the Hilbert scheme and describe the corresponding one-dimensional schemes. 
A saturated pair $(F(1),s)$ in $\cals^+_{{W_3}}$ corresponds to a one-dimensional scheme $Y$ that satisfies the following sequence (see diagram \eqref{descriptionY})
\[
0\lra \mathcal{O}_L(-1)\lra \mathcal{O}_Y\lra \mathcal{O}_E \lra 0.
\]
In the sequence, $E\subset H\subset \mathbb{P}^3$ is a planar cubic curve, and $L\subset \p3$ is a line such that $L\not\subset H$. 
We show in the next two lemmas that $E$ is the union of a line and a conic in $H$. 

\begin{proposition}{\label{W3+curve}}
Every pair $(F(1),s)$ in $\cals^+_{W_3}$ is saturated, meaning that it corresponds to a one-dimensional scheme $Y\subset \p3$. 
\end{proposition}

\begin{proof}
Recall that 
$W_3$ is defined by the following short exact sequence of pairs (sequence  (\ref{W3}))
\[
W_3: \quad 0\lra (\cali_{L}(1), 0) \lra (F(1), 1) \lra (\cali_{C}(1), 1)\lra 0.
\] 
The cokernel $Q$ of the pair $(F(1), s)$ fits into the sequence 
$$0\lra \cali_{L}(1) \lra Q \lra \cali_{C|H}(1)\simeq \calo_H(-1) \lra 0.$$

On the one hand, let $Y$ be the union of $L$ and $E$, where $L\not\subset H$ and $E\subset H$ is a planar cubic curve, such that $L$ intersects $E$ at a point $P$. $Q\simeq \cali_Y(2)$ satisfies above sequence, i.e.
$$0\lra \cali_{L}(1) \lra \cali_Y(2) \lra \calo_H(-1)\simeq \cali_{E|H}(2)\lra 0.$$
So, $\ext^1(\calo_H(-1), \cali_L(1))\geq 8$, which is the dimension of the set 
$$ \{E\subset H ~|~ [E]\in |\calo_H(3)|, P\in E\}. $$
On the other hand, apply $\Hom(\calo_H(-1), -)$ to the sequence
$$ 0\lra \cali_L(1) \lra \calo_{\p3}(1) \lra \calo_L(1) \lra 0, $$ 
to get 
$$ 0 \to \Ext^1(\calo_H(-1), \cali_L(1)) \to \Ext^1(\calo_H(-1), \calo_{\p3}(1)) \to \Ext^1(\calo_H(-1), \calo_L(1)) \to 0$$
since $\Ext^2(\calo_H(-1), \cali_L(1))=0$.

In addition, one can check that $\ext^1(\calo_H(-1), \calo_{\p3}(1))= h^0(H, \calo_H(3))=9$ and $\ext^1(\calo_H(-1), \calo_L(1))=1$, so that $\ext^1(\calo_H(-1), \cali_L(1))=8$. Therefore, $Q\simeq \cali_Y(2)$ are all nontrivial extensions.
\end{proof}

Let $\mathcal{R}\subset \calh^{4t}_{\p3}$ be the locally closed subscheme parameterizing one-dimensional schemes of the form $Y=L\cup L'\cup C$, where $L$ and $L'\subset H$ are lines with $L\not\subset H$, and $C\subset H$ is a conic. 
Define $\tilde{\mathcal{R}}\subset \cals_{W_{\emptyset}^-}$ to be the locally closed subscheme lying over $\mathcal{R}$. 
We prove in the next lemma that $Y$ fits into sequence (\ref{W3}) if and only if $Y \in \mathcal{R}$.

\begin{lemma}{\label{singularE}}
The one-dimensional scheme $Y$, which corresponds to a pair $(F(1), s)\in \cals^+_{W_3}$, is the union of $L$ and $E$ at a point $P$, where $E$ is the union of a line $L'$ and a conic $C$ in $H$.
\end{lemma}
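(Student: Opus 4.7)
The plan is to compare two natural descriptions of the cokernel of the section map in the wall sequence, extracting a divisibility condition that forces $E$ to contain the conic $C$.

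First, by Lemma \ref{W3+curve} a pair $(F(1),s)\in \cals^+_{W_3}$ is saturated and corresponds to a curve $Y = L\cup E$ with $E\subset H$ a cubic through $P=L\cap H$. Taking cokernels of sections in the wall sequence of pairs yields the sheaf exact sequence
\[
0\to \cali_L(1)\to \cali_Y(2)\to \cali_{C|H}(1)\to 0 \qquad (\ast).
\]

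Next, I pin down the embedding $\cali_L(1)\hookrightarrow \cali_Y(2)$ inside the ambient $\cali_Y(2)\hookrightarrow \calo_{\p3}(2)$. Using the Koszul resolution of $\cali_L$ and the vanishing $\Ext^1(\calo_L,\calo_{\p3}(1))=0$ (Serre duality, since $L\cong\p1$), one obtains $\Hom(\cali_L(1),\calo_{\p3}(2))\cong H^0(\calo_{\p3}(1))$, so the composite $\cali_L(1)\to\calo_{\p3}(2)$ is multiplication by some linear form $\ell$. For the image $\ell\cdot\cali_L$ to lie in $\cali_Y=\cali_L\cap\cali_E$, writing $\cali_L=(x,y)$ and $\cali_E=(h,e)$ and comparing degrees forces $\ell x,\ell y\in (h)$, whence $h\mid \ell$ (the alternative $h\mid x$ or $h\mid y$ would imply $L\subset H$). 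Thus $\ell= ch$ up to scalar, and the image of $\cali_L(1)$ inside $\calo_{\p3}(2)$ is exactly $h\cdot \cali_L(1)=\cali_{L\cup H}(2)$.

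Consequently $(\ast)$ rewrites as $0\to \cali_{L\cup H}(2)\to \cali_Y(2)\to \cali_{C|H}(1)\to 0$, and the cokernel embeds in $\calo_{\p3}(2)/\cali_{L\cup H}(2)=\calo_{L\cup H}(2)$. A Mayer--Vietoris computation on the nodal scheme $L\cup H$ identifies this cokernel, projected to the $H$-component, with $\cali_{E|H}(2)\subset \calo_H(2)$. On the other hand, the wall datum realizes the same cokernel as $\cali_{C|H}(1)$ with its natural inclusion $\cali_{C|H}(1)\hookrightarrow \calo_H(1)$ by multiplication by the conic equation $q$. Bridging the twist gap via a canonical linear form $\ell'\in H^0(\calo_H(1))$ extracted from the lift of the wall section produces an inclusion $\cali_{C|H}(1)\xrightarrow{\cdot\ell'}\calo_H(2)$; matching the two descriptions of the cokernel inside $\calo_H(2)$ yields the equality of principal ideals $(e)=(\ell'q)$ in $\calo_H$. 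Thus the cubic equation $e$ of $E$ factors as $\ell'q$, giving $E=V(e)=V(\ell')\cup V(q)=L'\cup C$ with $L'=V(\ell')\subset H$ a line, as claimed. The main obstacle is the final matching step: constructing the bridging linear form $\ell'$ canonically from the wall data and verifying that the two embeddings of $\calo_H(-1)$ into $\calo_H(2)$ agree, which requires carefully unwinding the connecting morphism of the pair wall sequence and tracking how the conic $C$ is encoded beyond what the abstract extension class of $\cali_L(1)$ by $\calo_H(-1)$ records.
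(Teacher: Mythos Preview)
Your approach is genuinely different from the paper's, and the obstacle you flag at the end is a real gap, not just a missing detail.

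The paper does not attempt to extract the factorization $e=\ell'q$ directly. Instead it argues by a dimension count: $\cals^+_{W_3}$ is irreducible of dimension $20$ (it is a $\mathbb{P}^8$-bundle over $\cals^{ss}_{W_3}\cong Gr(2,4)\times\calh^{2t+1}_{\p3}$), and the fibre of the map to the Hilbert scheme is $\mathbb{P}\Ext^1(\cali_Y(2),\calo_{\p3})=\mathbb{P}^7$, so the image $\calr'$ is irreducible of dimension $13$. The paper then checks that the locus $\{Y=L\cup L'\cup C\}$ is $13$-dimensional and, crucially, that every such $Y$ \emph{recovers} the wall sequence $W_3$ (this is the diagram chase using $\cali_{E|H}(2)\cong\cali_{C|H}(1)$ and the vanishing $\Ext^1(\calo_{\p3},\cali_L(1))=0$). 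Irreducibility then forces the two $13$-dimensional loci to coincide.

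Your direct route cannot be completed as written, and the reason is structural rather than technical. You correctly show that the inclusion $\cali_L(1)\hookrightarrow\cali_Y(2)$ is multiplication by $h$, so the quotient $Q'=\cali_Y(2)/h\cali_L(1)$ carries two injections: $Q'\hookrightarrow\calo_H(2)$ with image $(e)$ (from restriction to $H$), and $Q'\hookrightarrow\calo_H(1)$ with image $(q)$ (from the wall). But these are simply two embeddings of an invertible sheaf isomorphic to $\calo_H(-1)$ into two different line bundles; nothing forces one to factor through the other via a linear form. Concretely, for \emph{any} cubic $E\subset H$ through $P$ and \emph{any} conic $C\subset H$, the map $\cali_{E|H}(2)\cong\calo_H(-1)\xrightarrow{\cdot q}\calo_H(1)$ produces a sequence $0\to\cali_L(1)\to\cali_Y(2)\to\cali_{C|H}(1)\to 0$ of exactly the shape $(\ast)$. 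So $(\ast)$ together with the identification of its cokernel as $\cali_{C|H}(1)$ is \emph{no constraint at all} on $E$; the locus of such $Y=L\cup E$ is $15$-dimensional, not $13$. The genuine constraint comes from the requirement that the extension $\cali_Y(2)$ of $\cali_{C|H}(1)$ by $\cali_L(1)$ lift through the surjection $\cali_C(1)\twoheadrightarrow\cali_{C|H}(1)$ to a sheaf extension $F(1)\in\Ext^1(\cali_C(1),\cali_L(1))$. Your cokernel sequence $(\ast)$ has already thrown this lifting data away, and no ``canonical linear form extracted from the section'' will recover it: the section $s$ maps to the unique (up to scalar) section $h$ of $\cali_C(1)$, and the ambiguity in lifting lies in $H^0(\cali_L(1))$, which does not single out an $\ell'\in H^0(\calo_H(1))$.
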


\begin{proof}
We have shown that $\dim(\cals^{+}_{W_3})=20$, and  
$\cals^{+}_{W_3}$ is lying over a subscheme $\calr'\subset \calh^{4t}_{\p3}$ with fibers being $\mathbb{P}\Big(\Ext^1(\cali_{Y}(2), \calo_{\p3})\Big) = \mathbb{P}^7$. Therefore, the expected dimension of the $\calr'$ is $13$.

It is straightforward to compute that 
$\dim\calr=\{[Y]\in \calh^{4t}_{\p3}|Y=L\cup L'\cup C\}=13$. 
We prove next that such a one-dimensional scheme $Y$ can recover the sequence $W_3$.
For $Y=L\cup E$, where $E=L'\cup C$, we have a morphism $\calo_Y(2)\to \calo_E(2)\to 0$, which factors through $\calo_H(2)$ since $E\subset H$. Hence, we have the following diagram 
\[
\begin{tikzcd}
\cali_L(1)\arrow{d}\arrow{r}&\cali_Y(2)\arrow{d}\arrow{r}&\cali_{E|H}(2)\arrow{d}\\
\calo_{\p3}(1)\arrow{d}\arrow{r}&\calo_{\p3}(2)\arrow{d}\arrow{r}&\calo_H(2)\arrow{d}\\
\calo_L(1)\arrow{r}&\calo_Y(2)\arrow{r}&\calo_E(2)\\
\end{tikzcd}
\]
with all rows and columns being exact. Note that $\cali_{E|H}(2)\simeq \cali_{C|H}(1)$ because $E=L'\cup C$. Consider the following diagram
\[
\begin{tikzcd}
&&\calo_{\p3}\arrow{d}\\
&&\cali_{C}(1)\arrow{d}\\
\cali_L(1)\arrow{r}&\cali_Y(2)\arrow{r}&\cali_{C|H}(1)\\
\end{tikzcd}
\]
and apply the functor $\Hom(-, \cali_L(1))$ to the column sequence in the diagram. We have
$$\Ext^1(\cali_{C|H}(1), \cali_L(1)) \lra \Ext^1(\cali_{C}(1), \cali_L(1)) \lra \Ext^1(\calo_{\p3}, \cali_L(1))=0.$$ 
So, $\cali_Y(2)$ induces an extension $E\in \Ext^1(\cali_{C}(1), \cali_L(1))$, 
which indicates that $Y$ recovers sequence (\ref{W3}).  
Therefore, $Y$ fits into sequence (\ref{W3}) if and only if \linebreak $Y=L\cup L'\cup C$. 
\end{proof}

\begin{remark}{\label{counterexample}}
This is an example explaining Remark \ref{curvetopair} that a one-dimensional scheme satisfying diagram \ref{descriptionY} may not recover the wall.
\end{remark}

To summarize, there is a morphism from the moduli space $\cals_{0^+}$ for $\delta\in(0, W_3)$ to $\calg_{\p3}$. It also has the stratification given by:
\[
\cals_{0^+}\simeq (\tilde\calh_2\backslash((\tilde\calh_1\cap \tilde\calh_2)\cup \tilde{\mathcal{R}}))\cup (\cals^-_{W_0}\cup \cals^-_{W_1}\cup \cals^-_{W_2}\cup \cals^-_{W_3}),
\]
where every pair from the first part, $\tilde\calh_2\backslash((\tilde\calh_1\cap \tilde\calh_2)\cup \tilde{\mathcal{R}})$, is very stable and saturated, while every pair from the second part, $\cals^-_{W_0}\cup \cals^-_{W_1}\cup \cals^-_{W_2}\cup \cals^-_{W_3}$, is unsaturated but very stable.


\subsection{The class \texorpdfstring{$v=(2,0,-2,0)$}{v=(2,0,-2,0)}} \label{subsec:4}

In \cite[Section 6]{jardim2017two}, it was shown that the moduli space of Gieseker semistable sheaves contains three components, labeled there as $\overline{\cali(2)}, \overline{\calt(2,1)}$, and $\overline{\calt(2,2)}$. 
In this subsection, instead of showing the complete wall-crossings, we will focus on proving that the three components in \cite[Section 6]{jardim2017two} are the only components using wall-crossings. Meanwhile, we will see that a component in the Hilbert scheme corresponding to non-reduced curves is removed when crossing a wall.
We start with proving the fact that $H^0(\p3, E(1))\neq 0$ for $E\in \calg^v_{\p3}$. This will indicate that we study wall-crossings for the class $v(1)$.

\begin{proposition}{\label{spectrum:020}}
For every $E\in \calg^v_{\p3}$, we have $h^0(\p3, E(1))> 0$.
\end{proposition}
\begin{proof}
We start with computing $\chi(E(1))$ and $\chi(E(-1))$ using Riemann-Roch theorem for later use. 
It is straightforward to compute that
    $$\chi(E(1))=2 \quad \text{and} \quad \chi(E(-1))=-2.$$

Next, we use the spectrum studied in \cite{almeida2019spectrum}. 
The splitting type of $E$ is $(a_1,a_2)=(0,0)$ (\cite[Proposition 6]{almeida2019spectrum}). This implies that $m=2$ for the spectrum $\{k_1, k_2, ..., k_m\}$ of $E$. So, the spectrum is $(k_1, k_2)\in \mathbb{Z}^2$ with $k_1\leq k_2$.
Let $s=h^0(\p3, \mathcal{E}xt^2(E, \calo_{\p3}))\geq 0$, and we have (\cite[Proposition 9]{almeida2019spectrum}) $$s\leq \frac{c_2^2+c_2}{2}=3.$$
By \cite[Proposition 7]{almeida2019spectrum},
$$0=c_3(E)=-2(k_1+k_2)-2s,$$
which implies that $-3\leq k_1+k_2=-s\leq 0$.

Then, \cite[Proposition 8]{almeida2019spectrum} implies that there are only the following two possibilities for the spectrum $\{k_1, k_2\}$,
\begin{enumerate}
    \item $k_1=-1$ $\Rightarrow$ $k_2=-1,0,1$

    \item $k_1=-2$ $\Rightarrow$ $k_2=-1$.
\end{enumerate}
This is because, if $k_1\leq -3$, then $\{k_1, k_1+1, ..., -1\}$ are all in the spectrum (\cite[Proposition 8, (b)]{almeida2019spectrum}). In particular, there are at least $3$ numbers in the sequence, which is a contradiction. Similarly, if $k_2\geq 2$, then all of $\{1,2,..., k_2\}$ must be in the spectrum. 
The only possibility will be $k_1=1, k_2=2$, which contradicts $k_1+k_2\leq 0$. Also, we must have $k_2\geq -1$, otherwise, $k_1+k_2\leq -5$ which contradicts to $k_1+k_2\geq -3$.

Therefore, the only possibilities are (1) $k_1=-2$, (2) $k_1=-1$, (3) $k_2=1$ (4) $k_2=0$ (5) $k_2=-1$. The corresponding spectra are as follows:
\begin{center}
\begin{tabular}{|l|l|}
\hline
   $k_1=-2$  & $k_2=-1$\\
   \hline
   $k_1=-1$  & $k_2=-1$\ \text{or}\ $0$\ \text{or}\ $1$\\
   \hline
   $k_2=1$  & $k_1= 0$\ \text{or}\ $-1$\\
   \hline
   $k_2=0$  & $k_1=-1$\\
   \hline
   $k_2=-1$ & $k_1=-2$\\
   \hline
\end{tabular}
\end{center}

Finally, using \cite[Theorem 1]{almeida2019spectrum}, we have 
$$h^2(E(1))=\sum_{i=1}^2h^1(\calo_{\p1}(k_i+2)),$$
which is equal to $0$ for all possible $(k_1, k_2)$ in the above chart. 
Lastly, we use the value $\chi(E(1))=2$ and get

\[
\begin{aligned}
    2&=\chi(E(1))=h^0(E(1))-h^1(E(1))+h^2(E(1))-h^3(E(1))\\
    &=h^0(E(1))-h^1(E(1))-h^3(E(1)).\\
\end{aligned}
\]
Therefore, $h^0(E(1))=2+h^1(E(1))+h^3(E(1))\geq 2$.
\end{proof}

Next, we consider the corresponding Hilbert scheme for $v(1)$, which is $\calh^{3t+3}_{\p3}$. 
According to \cite{chen2012detaching} and \cite[Proposition 3.1, 3.2]{nollet1997hilbert}, the Hilbert scheme has five components.
Denote by ${\calh}_1$, $\calh_2$, $\calh_3$, $\calh_4$, and $\calh_5$ the five components, and we describe the general one-dimensional schemes in those components in the following chart.

\medskip

\begin{center}
\begin{tabular}{|c|l|}
\hline
$\calh_1$& plane cubic with three points\\
\hline
$\calh_2$& twisted cubic (ACM) with two points\\
\hline
$\calh_3$& conic curve with a line and one point\\
\hline
$\calh_4$& three disjoint lines\\
\hline
$\calh_5$& double line $Z$ with $p_a(Z)=-3$ union a line\\
     & intersecting in a double point\\
\hline
\end{tabular}
\end{center}

\medskip

One-dimensional schemes in $\calh_4$, $\calh_3$ and $\calh_2$ correspond to sheaves in $\cali(2)$, $\calt(2,1)$ and $\calt(2,2)$ respectively (compare with \cite[Section 6]{jardim2017two}). 
Consider the wall-crossings for the class $v(1)$. 
One computes that there are six walls as follows:
\[
W_{\emptyset}: \quad 0\lra (\cali_{Y}(2), 0) \lra (E(1), 1) \lra (\calo_{\p3}, 1) \lra 0
\]
in which $Y\subset \p3$ is a one-dimensional scheme with $P_{\calo_Y}(t)=3t+3$.
\[
W_i: \quad 0\lra (\cali_{P_i}(1), 0) \lra (E(1), 1) \lra (\cali_{A_i}(1), 1) \lra 0
\]
for $i=0,1,2,3$. These walls are the ones defined in Section \ref{sec: birat'l trans}.
\[
W_4: \quad 0\lra (\cali_{L}(1), 0) \lra (E(1), 1) \lra (\cali_{L' \cup P_2}(1), 1) \lra 0
\]
where $L$ and $L'$ are lines, $P_2$ is a zero-dimensional scheme of length $2$. 

Denote by $\tilde\calh_i$ ($i=1,2,3,4,5$) the components in $\cals_{W^-_{\emptyset}}$ lying over $\calh_i$.
Before describing the wall-crossings, we prove a lemma concerning the non-reduced curves in $\calh_5$. Let $W\in \calh_5$ be a general one-dimensional scheme. According to \cite[Proposition 3.2]{nollet1997hilbert}, $W$ is the union of a double line $Z$ and a line $L$ intersecting at a double point. We have $p_a(Z)=-3$ and $P_{\calo_Z}(t)=2t+4$. Denote by $L'$ and $Q$ the reduction of the schemes $Z$ and $W$, respectively. So, $L'$ is a line and $Q$ is the union of $L$ and $L'$.

\begin{proposition}{\label{nonred}}
Let $E_W\in \Coh(\p3)$ be a rank $2$ torsion-free sheaf, satisfying the sequence
$$ 0\lra \calo_{\p3}(-1)\lra E_W \lra \cali_W(1) \lra 0.$$
Then there is an inclusion 
$$ \cali_{L'}\hookrightarrow E_W.$$
In particular, a pair $(E_W(1), s)$ satisfying the above sequence fits into the sequence $W_4$, hence $E_W$ is Gieseker unstable.
\end{proposition}
\begin{proof}
Dualize the sequence $0\to \calo_{\p3}(-1)\to E_W \to \cali_W(1) \to 0$, and we get 
\begin{equation}{\label{LES1}}
0\to \op3(-1)\to E_W^{\vee}\to \op3(1)\xrightarrow{\eta}\begin{matrix}\cale xt^1(\cali_W(1), \op3)\\=\omega_W(3)\end{matrix}\to \cale xt^1(E_W, \op3)\to 0.
\end{equation}

Consider the inclusion $\cali_W\hookrightarrow \cali_Q$. The cokernel is torsion-free supported on $L'$ with Hilbert polynomial $t+2$. Therefore, the completeness of the inclusion is 
    $$0\lra \cali_W\lra \cali_Q\lra \calo_{L'}(1) \lra 0,$$
    which is equivalent to 
    $$0\lra \calo_{L'}(1) \lra \calo_W\lra \calo_Q\lra 0.$$
Dualizing the above sequence by applying the functor $\calh om(-, \omega_{\p3})$, and we get
    $$0\lra \omega_Q\lra \omega_W\lra \omega_{L'}(-1)\lra 0.$$
Tensoring the sequence by $\op3(2)$, and we have
$$0\lra \omega_Q(2)\lra \omega_W(2)\lra \omega_{L'}(1)\lra 0.$$

Note that the morphism $\eta$ in sequence (\ref{LES1}) can be regarded as a nontrivial section in
$$ H^0(\p3,\omega_W(3)) \simeq H^0(\p3, \omega_Q(2))\simeq H^0(\p3, \calo_Q(1))\simeq H^0(\p3, \calo_{L'}(1))\oplus H^0(\p3,\calo_L). $$
Therefore, there are the following two possibilities for the image of $\eta$ 
$$(1)\ \im(\eta)=\calo_Q(1)\quad (2)\ \im(\eta)=\calo_{L'}(1).$$

\begin{enumerate}
\item
If $\im(\eta)=\calo_Q(1)$, then the exact sequence in display (\ref{LES1}) gives 
$$0\lra \op3(-1) \lra E^{\vee}_W\lra \cali_Q(1) \lra 0.$$
In addition, since $E_W^\vee$ is a reflexive rank 2 sheaf with trivial determinant, we have that $E_W^{\vee\vee}\simeq E_W^\vee$.

Since $Q$ is planar, we have $h^0(\p3, E_W^{\vee\vee})= h^0(\p3, \cali_Q(1))=1$, and a nonzero morphism $\op3\to E^{\vee\vee}_W$. The cokernel of the morphism must be torsion-free; otherwise, it factors through $\op3(-1)$ because $E_W^{\vee\vee}$is reflexive. This contradicts the fact that $c_1(E)=0$. Hence, we have one of the following two sequences
$$0\lra \op3 \lra E_W^{\vee\vee}\lra \cali_L \lra 0,  \ \text{or} \ 0\lra \op3 \lra E_W^{\vee\vee}\lra \cali_{L'} \lra 0,$$
and 
$$0\lra E\lra E^{\vee\vee}\lra T\lra 0,$$
in which $P_T(t)=t+3$. We must have $T\simeq \calo_{L'}(2)$ due to the bottom row of the following commutative diagram
\begin{center}
\begin{tikzcd}
\op3(-1)\arrow[d]\arrow[r,"="]&\op3(-1)\arrow[d]&&\\
E_W\arrow[r, hook]\arrow[d]&E_W^{\vee\vee}\arrow[d]\arrow[r, two heads]&T\arrow[d, "="]\\
\cali_W(1)\arrow[r, hook]&\cali_Q(1)\arrow[r, two heads]&T.\\
\end{tikzcd}
\end{center}

Therefore, consider the following diagram
\begin{center}
    \begin{tikzcd}
        &\op3\arrow[d]\arrow[dr, dotted, "\phi"]&\\
        E_W\arrow[r, hook]&E_W^{\vee\vee}\arrow[r,two heads]\arrow[d, two heads]&\calo_{L'}\\
        &\cali_L&\\
    \end{tikzcd}
\end{center}
we have that $\ker(\phi)=\cali_{L'}$ and $\coker(\phi)=\calo_{2P}$ ($2P$ is a 0-dimensional scheme of length 2). It induces the following sequence 
$$0\lra \cali_{L'}\lra E_W\lra \cali_{L\cup 2P}\lra 0,$$
in which $2P\subset L'$. 

\item 
If $\im(\eta)=\calo_{L'}(1)$, then sequence (\ref{LES1}) induces
$$0\lra \op3(-1)\lra E_W^{\vee}\lra \cali_L(1)\lra 0. $$
This implies that $E^{\vee}\simeq \op3\oplus\op3$. Consider the diagram
\begin{center}
\begin{tikzcd}
\op3(-1)\arrow[d]\arrow[r,"="]&\op3(-1)\arrow[d]&&\\
E_W\arrow[r, hook]\arrow[d]&E_W^{\vee\vee}\arrow[d]\arrow[r, two heads]&T\arrow[d, "="]\\
\cali_W(1)\arrow[r, hook]&\cali_L(1)\arrow[r, two heads]&T,\\
\end{tikzcd}
\end{center}
and we must have $T\simeq \calo_Z$. This leads to the diagram 
\begin{center}
    \begin{tikzcd}
        &\op3\arrow[d]\arrow[dr, dotted, "\phi"]&\\
        E_W\arrow[r, hook]&E_W^{\vee\vee}=\op3^{\oplus 2}\arrow[r,two heads]\arrow[d, two heads]&\calo_{Z}\\
        &\op3.&\\
    \end{tikzcd}
\end{center}
The image of $\phi$ is either $\calo_Z$ or $\calo_{L'}$.
\begin{enumerate}
    \item[(a)] If $\im(\phi)=\calo_{L'}$, then the above diagram leads to the sequence 
    $$0\lra \cali_{L'}\lra E_W\lra \cali_{L'\cup 2P} \lra 0.$$
    
    \item[(b)] If $\im(\phi)=\calo_{Z}$, then the above sequence leads to 
    $$0\lra \cali_{Z}\lra E_W\lra \op3\lra 0.$$
    Apply the functor $\Hom(-, \cali_{L'})$, and we have $$ \Hom(E_W, \cali_{L'})=\Hom(\cali_Z, \cali_{L'})\simeq\mathbb{C}.$$ This induces the following diagram
    \begin{center}
        \begin{tikzcd}
            &\ker(\psi)\arrow[d]&\\
            \cali_Z\arrow[r, hook]\arrow[d, "="]&E_W\arrow[r, two heads]\arrow[d, "\psi"]&\op3\arrow[d, "\alpha"]\\
            \cali_Z\arrow[r, hook]&\cali_{L'}\arrow[r, two heads]&\calo_{L'}(2).\\
        \end{tikzcd}
    \end{center}
It is evident that $\ker(\alpha)=\cali_{L'}$ and $\coker(\alpha)=\coker(\psi)=\calo_{2P}$.
Therefore, we have 
$$0\lra \cali_{L'}\lra E_W\lra \cali_{L'\cup 2P} \lra 0.$$
\end{enumerate}

To summarize, in all the above situations, we always have $\cali_{L'}\hookrightarrow E_W$.
\end{enumerate}
\end{proof}

Lastly, we briefly sketch the wall-crossings for $v(1)$ as $\delta$ gets smaller.

$\bullet$ When crossing $W_0$, $W_1$, $W_2$, and $W_3$, $\tilde\calh_1$ undergoes a flip, another flip, a blow down, and lastly, a removal of an open subscheme. The other components undergo some birational transformations or stay the same.
Besides, crossing $W_3$ will not create a new component from $\tilde{\calh}^{(3)}_1$. This is because $\tilde{\mathcal{X}}_3\subset \tilde{\calh}^{(3)}_1$ is contracted when hitting $W_3$, and then removed when crossing $W_3$. The subscheme $\tilde{V}_3=\tilde{\mathcal{Z}}_3\backslash \tilde{\mathcal{X}}_3$ may become components when crossing $W_3$. However, this is impossible because each point in $\tilde{V}_3$ is contained in $\calh_i$ for some $i=2,3,4,5$, and $\tilde{\calh}_i$ undergoes a birational transformation when crossing $W_3$. Therefore, there will be no components created. 

$\bullet$ When crossing $W_4$, $\tilde\calh_2^{(4)}$ undergoes a flip. $\tilde\calh_5^{(4)}$ is contracted when hitting 
$W_4$ due to Proposition \ref{nonred}, and then replaced by a subscheme $\tilde{\calg}_W\subset \cals_{0^+}$. There is a morphism $\tilde{\calg}_W\to \calg_W\subset \calg^v_{\p3}$ to a subscheme $\calg_W$ of the Gieseker moduli space. 
Indeed, $\calg_W$ is not a component itself, because otherwise, we can show that $\dim(\calg_W)\leq 13$.
Consider the following two fibrations for $\tilde{\calg}_W$, 
$$(a)\ \begin{matrix}\text{Fiber}\\ \mathbb{P}\Ext^1((\cali_{L'\cup 2P}(1), 1),(\cali_L(1), 0))\\=\mathbb{P}^4\end{matrix} \to\begin{matrix} \text{Base}\\ \{(\cali_{L'\cup 2P}(1), 1)\oplus(\cali_L(1), 0)\}\\ \dim=9\end{matrix}$$

$$(b)\ \begin{matrix}\text{Fiber}\\ \mathbb{P}H^0(\p3, E(1))\\\dim\geq 0\end{matrix} \to\begin{matrix} \text{Base}\\ \calg_W\\{}\end{matrix}.$$
The fibration in item (a) implies that $\dim(\tilde{\calg}_W)=13$, while the fibration in item (b) shows that $\dim(\calg_W)\leq 13$.
On the other hand, one computes that $\ext^1(E,E)\geq 17$ for $E$ satisfying the sequence $0\to \cali_{L'\cup 2P}\to E\to \cali_L\to 0$. This contradicts to $\dim(\calg_W)\leq 13$.

$\bullet$ For $\delta\in (0, W_4)$, there are three components remaining, namely $\tilde\calh_2^{(5)}$, $\tilde\calh_3^{(5)}$, and $\tilde\calh_4^{(5)}$. 
Therefore, we conclude that the components of $\calg^v_{\p3}$ found in \cite{jardim2017two} are its entirety.


\subsection{The class \texorpdfstring{$v=(2,0,-2,1)$}{v=(2,0,-2,1)}} \label{subsec:5}

We show in this last subsection that the Gieseker moduli space for class $v$ has two generically saturated components.

We show in the next Lemma a non-vanishing result on the global sections.
\begin{proposition}{\label{spectrum:022}}
For every $E\in \calg^v_{\p3}$, we have $h^0(\p3, E(1))> 0$.
\end{proposition}

\begin{proof}
    The proof is similar to Proposition \ref{spectrum:020}.
    A direct computation shows that $\chi(E(-1))=-1$, and $\chi(E(1))=3$. We will show next that $h^2(E(1))=0$, which implies  $h^0(E(1))\geq 3>0$.

    Again, we will prove the vanishing of $h^2(E(1))$ by finding all possible spectra for $E$.
    The splitting type of $E$ is $a_1=0, a_2=0$ with $m=2$. The result in \cite[Proposition 7]{almeida2019spectrum} implies that
    $$2=c_3=-2(k_1+k_2)-2s,$$
    which simplifies to $k_1+k_2=-s-1\leq -1$ ($k_1\leq k_2$).

    By \cite[Proposition 8]{almeida2019spectrum}, we must have $k_1\geq -2$, otherwise, $\{-3,-2,-1\}$ are in the spectrum, which is a contradiction. Also, we cannot have $k_2\leq 2$, otherwise, the only possibility is $k_1=1, k_2=2$, which contradict to $k_1+k_2\leq -1.$
    We list all possibilities in the chart below
    \begin{center}
        \begin{tabular}{|c|c|}
        \hline
         $k_1=-1$    & $k_2=-1$ or $0$ or $1$ \\
         \hline
         $k_1=-2$    & $k_2=-1$\\
        \hline
        \end{tabular}
    \end{center}

   Lastly, \cite[Theorem 1]{almeida2019spectrum} implies that 
$$h^2(E(1))=\sum_{i=1}^2h^1(\calo_{\p1}(k_i+2)),$$
    which is $0$ for all possible $(k_1,k_2)$ in the chart, hence the claim follows.
\end{proof}

\subsubsection{Wall-crossings}
According to Proposition \ref{spectrum:022}, we consider the wall-crossings for the twisted class $v(1)$, where $v=[2, 0, -2, 1]$. 
Let $Y$ be the one-dimensional scheme corresponding to a stable pair $(E(1),s)$, we have

$$0\lra \calo_{\p3} \lra E(1) \lra \cali_Y(2) \lra 0.$$

It is straightforward to check that $P_{\calo_Y}(t)=3t+2$. There are three components in $\calh^{3t+2}_{\p3}$ (\cite{nollet1997hilbert}) shown below 

\medskip

\begin{center}
\begin{tabular}{|c|l|}
\hline
Components& Description of a general one-dimensional scheme\\
\hline
$\calh_1$  & planar cubic curve with points \\
\hline
$\calh_2$  & twisted cubic (rational curve) with a point \\
\hline
$\calh_3$  & conic curve with a line\\
\hline
\end{tabular}
\end{center}

\medskip

Moreover, there are five walls for $v(1)$ shown as follows:

\medskip

\begin{center}
\begin{tabular}{|c|l|}
\hline
$W_{\emptyset}$     & The collapsing wall \\
\hline
$W_0$     & $0\lra (\op3(1),0)\lra (E(1),1)\lra (\cali_{C \cup 2P}(1), 1)\lra 0$\\
\hline
$W_1$     & $0\lra (\cali_Q(1),0)\lra (E(1),1)\lra (\cali_{C \cup P}(1), 1)\lra 0$\\
\hline
$W_2$     & $0\lra (\cali_{2Q}(1),0)\lra (E(1),1)\lra (\cali_{C}(1), 1)\lra 0$\\
\hline
$W_3$     & $0\lra (\cali_L(1),0)\lra (E(1),1)\lra (\cali_{L \cup P}(1), 1)\lra 0$\\
\hline
\end{tabular}
\end{center}

\medskip

In the above chart, $L$ is a line, $C$ is a conic, $P, Q, 2P, 2Q$ are 0-dimensional schemes in $\p3$ with the lengths of them are $1,1,2,2$ respectively.

Let $\tilde\calh_i\subset \cals_{\delta^-_{\emptyset}}$ be the components lying over $\calh_i$.
Crossing $W_0$, $W_1$, and $W_2$ will kill the component $\tilde\calh_1$. 
Then, crossing $W_3$ results in a flip for $\tilde\calh_2$. Finally, when $\delta\in (0, W_3)$, there are two components $\tilde\calh'_2$ and $\tilde\calh'_3$ (birational to $\tilde\calh_2$ and $\tilde\calh_3$) in the moduli space of pairs. Correspondingly, there are two generically saturated components in the Gieseker moduli space.


\bibliographystyle{alpha}
\bibliography{ref.bib}

\end{document}